\newtheorem{Theorem}{Theorem}[section]
\newtheorem{Claim}[Theorem]{Claim}
\newtheorem{Lemma}[Theorem]{Lemma}
\newtheorem{Proposition}[Theorem]{Proposition}
\newtheorem{Definition}{Definition}
\newtheorem{Corollary}[Theorem]{Corollary}
\newcommand{\xt}{^{\times 2}}
\newcommand{\x}{^{\times}}
\newcommand{\N}{\mathbb{N}}
\newcommand{\Q}{\mathbb{Q}}
\newcommand{\Lat}{\mathcal{L}}
\newcommand{\Z}{\mathbb{Z}}
\newcommand{\R}{\mathbb{R}}
\newcommand{\C}{\mathbb{C}}
\newcommand{\F}{\mathbb{F}}
\newcommand{\D}{\mathcal{D}}
\newcommand{\Dab}{\mathcal{D}^{(a,b)}}
\newcommand{\Dabs}{\mathcal{D}_{\sol}^{(a,b)}}
\newcommand{\tauab}{\eta^{(a,b)}}
\newcommand{\Fu}{\mathcal{F}}
\newcommand{\Gij}{H}
\newcommand{\Vij}{W}
\newcommand{\Kij}{K_1 \times K_2}
\newcommand{\dxt}{d^{\times}t}
\newcommand{\wij}{\phi_{12}}
\newcommand{\wijp}{\phi_{12,p}}
\newcommand{\Bij}{(\ ,\ )}
\newcommand{\BB}{\langle\ ,\ \rangle}
\newcommand{\B}{(\ ,\ )}
\newcommand{\J}{J}
\DeclareMathOperator{\rs}{rs}
\DeclareMathOperator{\ad}{ad}
\DeclareMathOperator{\h}{ht}
\DeclareMathOperator{\sep}{sep}
\DeclareMathOperator{\Vol}{Vol}
\DeclareMathOperator{\GL}{GL}
\DeclareMathOperator{\SO}{SO}
\DeclareMathOperator{\Res}{Res}
\DeclareMathOperator{\Tr}{Trace}
\DeclareMathOperator{\Gal}{Gal}
\DeclareMathOperator{\SL}{SL}
\DeclareMathOperator{\Inv}{Inv}
\DeclareMathOperator{\Invrs}{Inv^{rs}}
\DeclareMathOperator{\Stab}{\Delta}
\DeclareMathOperator{\Sel}{Sel}
\DeclareMathOperator{\Spec}{Spec}
\DeclareMathOperator{\sol}{sol}
\DeclareMathOperator{\topp}{top}
\DeclareMathOperator{\Id}{I}
\DeclareMathOperator{\Disc}{Disc}
\DeclareMathOperator{\red}{red}
\DeclareMathOperator{\irr}{irr}
\title{2-Selmer groups of hyperelliptic curves with two marked points}
\author{Ananth N.\ Shankar\\ }
\date{\today}
\def\iddots{\mathinner{\mkern1mu\raise\p@
\vbox{\kern7\p@\hbox{.}}\mkern2mu
\raise4\p@\hbox{.}\mkern2mu\raise7\p@\hbox{.}\mkern1mu}}
\begin{document}
\maketitle
%\tableofcontents
\abstract{We consider the family of hyperelliptic curves over $\Q$ of fixed genus along with a marked rational Weierstrass point and a marked rational non-Weierstrass point. When these curves are ordered by height, we prove that the average Mordell-Weil rank of their Jacobians is bounded above by 5/2. We prove this by showing that the average rank of the 2-Selmer groups is bounded above by 6. We also prove that the average size of the $\phi$-Selmer groups of a family of isogenies associated to this family is exactly 2. }

\section{Introduction}
There has been a lot of recent progress on studying the statistics of
Jacobians and rational points of familes of curves. In \cite{BG},
Manjul Bhargava and Benedict Gross prove that when all odd-degree
hyperelliptic curves over $\Q$ are ordered by height, the average size
of the 2-Selmer group of their Jacobians is bounded by $3$, and the
average rank of the Jacobians is bounded by $3/2$. Using these
results, Bjorn Poonen and Michael Stoll in \cite{PoSto} prove that a
positive proportion of odd-degree hyperelliptic curves over $\Q$ have
exactly one rational point (namely, the Weirstrass point at infinity),
and that this proportion goes to one as the genus goes to infinity. In
\cite{SW}, Arul Shankar and Xiaoheng Wang prove results analogous to
\cite{BG} and \cite{PoSto} for the family of monic even-degree
hyperelliptic curves. Jack Thorne, in \cite{JThorne}, studies the
statistics of the 2-Selmer set in a family of non-hyperelliptic
curves, which is a pointed subet of the 2-Selmer group. He proves that
the average size of the 2-Selmer set is finite. He uses these
statistics to prove that a positive proportion have integral points
everywhere locally, but have no global integral points.

In this work, we prove results about average Selmer sizes for different families of curves.  We recall the definition of $\phi$-Selmer groups, where $\phi: A \rightarrow B$ is an isogeny of abelian varieties over $\Q$. Let $A[\phi]$ denote its kernel. The action of $\Gal(\Q^{\sep}/\Q)$ on the exact sequence 
$$0\rightarrow A[\phi](\overline{\Q}) \rightarrow A(\overline{\Q}) \rightarrow B(\overline{\Q}) \rightarrow 0$$
gives a long exact sequence of the Galois cohomology groups. In particular, there is an injective map $B(\Q)/\phi A(\Q) \rightarrow H^1(\Q,A[\phi])$, where $H^1(\Q, A[\phi])$ is the Galois cohomology group of $\Gal(\overline{\Q}/\Q)$ with coefficients in $A[\phi](\overline{\Q})$. The $\phi$-Selmer group $\Sel_{\phi}$ over $\Q$ is a finite subgroup of $H^1(\Q, A[\phi])$ consisting of elements which locally lie in the images of $B(\Q_v)/\phi A(\Q_v)$, for all completions $\Q_v$ of $\Q$. This definition recovers the classical definitions of the $n$-Selmer groups, by choosing the isogeny $\phi$ to be multiplication by $n$. 

Consider a smooth hyperelliptic curve $C_1$ of genus $m \geq 2$ over $\Q$, with a marked rational Weirstrass point that we denote by $\infty_1$, and a marked rational non-Weirstrass point that we denote by $P_1$. Let $P'_1$ denote the conjugate of $P_1$ under the hyperelliptic involution. Without loss of generality, we may assume that under the natural map $C_1 \rightarrow \mathbb{P}^1$, $\infty_1$ maps to $\infty \in \mathbb{P}^1(\Q)$, and $P_1$ maps to $0 \in \mathbb{P}^1(\Q)$. Therefore, $C_1$ has an affine equation of the form
\begin{equation*}\label{form}
y^2 = x^{2m+1} + a_1 x^{2m} + \hdots a_{2m}x + e^2 = f(x)
\end{equation*}
where $f(x) \in \Q[x]$ is separable over $\Q$, and $e \in \Q^{\times}$. If we assume that $f(x)$ has integral coefficients, and that there is no prime $p$, such that $p^{2i}| a_i$ for all $i$ and $p^{2m+1} | e$, then the equation $y^2 = f(x)$ is unique. Denote the family of such polynomials by $\mathcal{B}$. We define the height of $C_1$ to be 
$$\h(C_1) := \h(f) := \max \{|a_i|^{1/2i}, |e|^{1/2m+1} \}.$$
It follows from this definition that for fixed $X \in \R$, there are finitely many curves with height bounded by $X$. Let $J_1$ denote the Jacobian of $C_1$. The first main result of the paper is
\begin{Theorem}\label{2selm}
When all hyperelliptic curves of a fixed genus $m \geq 2$ having a marked rational Weirstrass point and a marked rational non-Weirstrass point, are ordered by height, the average size of the 2-Selmer groups of their Jacobians is bounded above by 6. 
\end{Theorem}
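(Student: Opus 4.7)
The strategy is to follow the Bhargava--Gross orbit-counting framework from \cite{BG}, modified to account for the additional rational structure coming from the marked non-Weierstrass point $P_1$. The class $[P_1] - [\infty_1] \in J_1(\Q)$, together with the relation $\mathrm{div}(x) = (P_1) + (P_1') - 2(\infty_1)$ forced by the equation $y^2 = f(x)$ with $f(0) = e^2$, provides a canonical rational divisor class which must enter the parametrization. I expect the final bound $6$ to factor naturally as $6 = 2 \times 3$, with the factor of $2$ corresponding to the average $\phi$-Selmer size stated in the abstract and the factor of $3$ matching the odd-hyperelliptic $2$-Selmer bound from \cite{BG}.

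The plan is: first, construct a reductive group $G$ and a linear representation $V$ over $\Z$ equipped with an invariant map to $\mathcal{B}$, such that $G(\Q)$-orbits of locally soluble rational points on $V(\Q)$ with invariants $f$ parametrize the nontrivial elements of $\Sel_2(J_f)$. This representation should be a variant of the self-adjoint-operator / pencils-of-quadrics representations used in \cite{BG} and \cite{SW}, with an extra condition encoding the marking at $P_1$ (for instance, requiring a distinguished isotropic flag or a fixed eigenvalue of the pencil). After passing to $G(\Z)$-orbits on $V(\Z)$ by the usual class-number arguments, I would apply Bhargava's averaging method: choose a fundamental domain $\mathcal{F}$ for $G(\Z) \backslash G(\R)$, an invariant region $R_X \subset V(\R)$ cut out by the height condition $\h(f) \leq X$, and express the orbit count as a volume integral plus error. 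A local-density sieve then restricts to locally soluble orbits and, combined with the orbit parametrization, yields an upper bound on the average of $|\Sel_2(J_1)|$.

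The main obstacle, as in every Bhargava-style average-Selmer argument, is the cuspidal analysis: one must classify the reducible integral points in the cuspidal region of $\mathcal{F}$, show that they correspond either to the identity class of $\Sel_2$ or to the distinguished classes coming from $J_1(\Q)[2]$ and from $[P_1] - [\infty_1]$, and bound their total contribution to the orbit count. The rational class introduced by $P_1$ will produce new reducible types absent from the Bhargava--Gross setting, and the bulk of the technical work should lie in enumerating these, stratifying the cusp, and applying uniform estimates valid over all strata. Once this is carried out, the volumes from Step~2 combine with the local densities from the sieve to produce exactly the bound of $6$ asserted in the theorem, with the excess over the odd-hyperelliptic bound of $3$ accounted for by the extra Selmer class supported on the marked rational divisor.
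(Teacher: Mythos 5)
Your outline correctly identifies the general Bhargava--Gross machinery (orbit parametrization of Selmer elements, reduction to $G(\Z)$-orbits, averaging over a fundamental domain, cusp analysis, local-density sieve), but the central idea of the paper --- the choice of representation --- is left as a placeholder, and the candidates you float for encoding the marked point would not lead you to it. The paper does \emph{not} add a distinguished isotropic flag or fix an eigenvalue of a pencil. It takes the split group $H = \SO_n \times \SO_n$ (with $n = 2m+1$) acting on $V_1 \otimes V_2$, realized in the Vinberg setting as skew-self-adjoint (equivalently self-adjoint) operators $T$ on the $2n$-dimensional space $V_1 \oplus V_2$ with vanishing diagonal blocks. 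The characteristic polynomial of such a $T$ is $f(x^2)$ and its constant term is automatically the square of the pfaffian invariant $e$; thus the condition $f(0)=e^2$ --- which is exactly what the marked non-Weierstrass point $P_1=(0,e)$ amounts to --- is built into the invariant theory for free rather than imposed as an extra linear-algebraic condition. The two curves $y^2=f(x)$ and $y^2=xf(x)$ then arise from the two pencils of quadrics attached to an orbit, and the $2$-Selmer group of $J_1$ is parametrized by locally $1$-soluble orbits.

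Your arithmetic accounting of the constant $6$ is also not how the proof closes. The bound is obtained additively as $6 = 2 + 4$, not multiplicatively as $2\times 3$: there are \emph{two} distinguished orbits for each set of invariants (two Kostant sections, coming from the two $H(k)$-orbits of regular nilpotents, corresponding to the identity class and to the class of $(P_1)-(\infty_1)$ in $\Sel_2(J_1)$), and these account for the subtracted $2$ in $\sum_c(\#\Sel_2(J_{1,c})-2)$; the main body of the fundamental domain then contributes at most the Tamagawa number of $\SO_n\times\SO_n$, which is $4$ (versus $2$ for the single $\SO_n$ in \cite{BG}). The factor-of-two jump from $3$ to $6$ therefore comes from doubling \emph{both} terms --- two marked classes in the cusp and a doubled Tamagawa number in the main body --- not from multiplying the $\phi$-Selmer average by the Bhargava--Gross bound. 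Without identifying the representation and this accounting, the remaining steps (integral representatives for locally soluble orbits, the reducibility lemmas that isolate the cusp, the mass formula relating local orbit counts to $\#J_{1,c}(\Q_p)/2J_{1,c}(\Q_p)$) cannot be set up, so as written the proposal has a genuine gap at its foundation.
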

The family considered in \cite{SW} is the family of hyperelliptic curves with a marked non-Weirstrass point. The additional prescence of the marked Weirstrass point $\infty_1$ in a curve $C_1$ of our family has the consequence of introducing a square-root of the class of $(P_1) - (P'_1)$ in $J_1(\Q)$, where $J_1$ is the Jacobian of $C_1$. Indeed, $(P_1) -( P'_1)$ is twice the class of $(P_1) - (\infty)$. We therefore expect that despite the existence of the extra marked Weirstrass point, the statistics of the 2-Selmer size for our family is the same as the statistics for the family of even hyperelliptic curves. This expectation is supported by the fact that our result agrees with \cite[Theorem 1.2]{SW}.

We prove that for 100\% of curves in our family, the class of $(P_1) - (\infty_1)$ in $J_1(\Q)$ is not divisible by 2. Therefore, the average contribution of $(P_1) - (\infty_1)$ to the 2-rank of $\Sel_2(J_1)$ is 1. Denote the 2-rank by $r_1$. The following inequality holds 100\% of the time: 
$$2(r_1-1) \le 2^{r_1 -1} =\frac {\#\Sel_2(J_1)}{2}.$$ 
It follows that the average 2-rank of the 2-Selmer groups of Jacobians of curves in our family is at most $5/2$. Because the 2-Selmer rank is an upper-bound for the Mordell-Weil rank, we obtain
\begin{Corollary}
When all hyperelliptic curves of a fixed genus $m \geq 2$ having a marked rational Weirstrass point and a marked rational non-Weirstrass point are ordered by height, the average rank of the Mordell-Weil group of their Jacobians is bounded above by $5/2$. 
\end{Corollary}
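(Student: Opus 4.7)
The plan is to deduce the corollary from Theorem \ref{2selm} by exploiting the existence of a ``free'' 2-Selmer class coming from the pair of marked points. The key observation, explained in the paragraph preceding the corollary, is that the divisor $(P_1) - (\infty_1)$ gives a rational point on $J_1$ whose double is the class of $(P_1) - (P_1')$, and hence this class has a canonical square root in $J_1(\Q)$ which can be used to control the 2-Selmer rank.

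First I would establish that for $100\%$ of the curves $C_1$ in our family (ordered by height), the class of $(P_1) - (\infty_1)$ in $J_1(\Q)$ is not divisible by $2$. This is stated in the excerpt as a fact to be proven elsewhere in the paper, so I would invoke it directly. This ensures that the image of $(P_1) - (\infty_1)$ under the natural map $J_1(\Q)/2J_1(\Q) \hookrightarrow \Sel_2(J_1)$ is nontrivial, so the 2-rank $r_1$ of $\Sel_2(J_1)$ satisfies $r_1 \geq 1$ for $100\%$ of curves.

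Next I would use the elementary inequality $2(r_1 - 1) \leq 2^{r_1-1}$, valid for all integers $r_1 \geq 1$ (it holds for $r_1 = 1, 2, 3$ by direct check and for $r_1 \geq 3$ by a trivial induction). Rewriting, $2(r_1 - 1) \leq \#\Sel_2(J_1)/2$ whenever $r_1 \geq 1$. Since this holds for $100\%$ of curves, averaging over the family and applying Theorem \ref{2selm}, which gives $\mathrm{avg}(\#\Sel_2(J_1)) \leq 6$, yields
\[
2 \bigl( \mathrm{avg}(r_1) - 1 \bigr) \leq \tfrac{1}{2}\,\mathrm{avg}(\#\Sel_2(J_1)) \leq 3,
\]
so $\mathrm{avg}(r_1) \leq 5/2$. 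Finally, since the Mordell-Weil rank of $J_1$ is bounded above by the 2-Selmer rank $r_1$, the average Mordell-Weil rank is bounded by $5/2$, as claimed.

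The only nontrivial input is the $100\%$ non-divisibility statement for $(P_1) - (\infty_1)$; the rest is a short combinatorial/averaging argument. I would expect this non-divisibility claim to be the main obstacle, most likely proved by a local computation at some well-chosen prime (for example, showing that for a density-one set of $f \in \mathcal{B}$ there exists a prime $p$ at which the reduction of $(P_1) - (\infty_1)$ is nontrivial in $J_1(\F_p)/2J_1(\F_p)$), but once that claim is in hand the corollary follows immediately from Theorem \ref{2selm}.
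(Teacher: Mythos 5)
Your proposal is correct and follows essentially the same route as the paper: invoke the $100\%$ non-divisibility of $(P_1)-(\infty_1)$, use the inequality $2(r_1-1)\le 2^{r_1-1}=\#\Sel_2(J_1)/2$, average against Theorem \ref{2selm}, and conclude via the bound of Mordell--Weil rank by 2-Selmer rank. No substantive differences from the paper's argument.
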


To a curve $C_1$ corresponding to $f(x) \in \mathcal{B}$, we associate two other curves $C_2$ and $C$, where $C_2$ is given by the equation $y^2 = xf(x)$, and $C$ is given by the equation $y^2 = f(x^2)$. We therefore obtain two other familes of hyperelliptic curves as $f$ varies over $\mathcal{B}$. We have $J_1[2] \simeq J_2[2]$ as group schemes over $\Q$, where $J_1$ and $J_2$ are the Jacobians of $C_1$ and $C_2$ respectively. We denote this group scheme by $\Delta$. Therefore, the 2-Selmer groups of $J_1$ and $J_2$ are subgroups of the same group, $H^1(\Q,\Delta)$. Denote their intersection by $\Sel_{(1,2)}(f)$. 
\begin{Theorem}\label{12selm}
The average size of $\Sel_{(1,2)}(f)$, as $f \in \mathcal{B}$ is ordered by height, is equal to $2$. 
\end{Theorem}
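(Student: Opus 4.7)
The plan is to establish matching lower and upper bounds of $2$ on the average size of $\Sel_{(1,2)}(f)$.

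For the lower bound, I would show that $\#\Sel_{(1,2)}(f) \ge 2$ for a density-one set of $f \in \mathcal{B}$. The identity of $H^1(\Q,\Delta)$ always lies in the intersection, so it suffices to produce one further non-trivial class. Let $\xi_f \in H^1(\Q,\Delta)$ denote the image of $(P_1)-(\infty_1) \in J_1(\Q)/2J_1(\Q)$ under the $2$-descent map. By the discussion preceding Theorem~\ref{2selm}, $\xi_f$ lies in $\Sel_2(J_1)$ and is non-trivial for $100\%$ of $f$. To show that $\xi_f$ also lies in $\Sel_2(J_2)$, I would exploit the bielliptic structure provided by the curve $C:y^2=f(x^2)$: its two commuting involutions give rational degree-two maps $\pi_1:C\to C_1$ and $\pi_2:C\to C_2$, inducing an isogeny $J \sim J_1 \times J_2$ compatible with the identification $J_1[2]\cong J_2[2]\cong\Delta$. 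Starting from the rational point $(0,e)\in C(\Q)$ --- which lies above $P_1$ via $\pi_1$ and above $(0,0)$ via $\pi_2$ --- a direct $2$-descent computation should exhibit a rational divisor class on $C_2$ (built from $(0,0)$ and the points at infinity of $C_2$) whose $2$-descent class in $H^1(\Q,\Delta)$ coincides with $\xi_f$, placing $\xi_f$ in $\Sel_2(J_2)$ as desired.

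For the upper bound, I would adapt the orbit parametrization of Bhargava--Gross and Shankar--Wang to the intersection. Following \cite{BG} for $C_1$ and \cite{SW} for $C_2$, locally soluble rational orbits of appropriate pairs $(G_i, V_i)$ parametrize $\Sel_2(J_i)(f)$, with invariant rings encoding the polynomial $f(x)$. The condition that a class in $H^1(\Q,\Delta)$ simultaneously lie in both Selmer groups ought to be characterized by orbits of a reductive group $G$ on a $G$-stable subvariety $V_{12}$, carved out by the compatibility imposed by the isogeny $J \to J_1\times J_2$. I would then count integral $G(\Z)$-orbits on $V_{12}(\Z)$ of bounded height via Bhargava's averaging over fundamental domains, sieving to the locally soluble locus using standard local mass formulas. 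The counting should reveal that the main term is contributed by exactly two distinguished orbits (the identity and $\xi_f$), while the contribution of the remaining ``irreducible'' integral orbits vanishes on average --- yielding the exact equality rather than a mere upper bound.

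The main obstacle is the construction of the orbit parametrization of $\Sel_{(1,2)}(f)$: one must pin down the correct pair $(G, V_{12})$ and verify that its locally soluble rational orbits, modulo $G(\Q)$, are in natural bijection with $\Sel_{(1,2)}(f)$, with stabilizers matching $\Delta$. A secondary but essential step is proving that irreducible integral orbits on $V_{12}$ are asymptotically negligible; this is what upgrades the estimate from an upper bound to the exact average $2$, and it is here that the additional rigidity of the intersection condition (as opposed to membership in either Selmer group alone) must be leveraged.
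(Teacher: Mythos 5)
Your overall skeleton is right (the average is $2 = $ identity $+$ the class of $(P_1)-(\infty_1)$, with everything else averaging to zero), and your lower bound is essentially the paper's: the class of $(P_1)-(\infty_1)$ is shown, via Stoll's explicit descent, to coincide with the class of $(P_2)-(\infty_2) \in J_2(\Q)/2J_2(\Q)$ (Proposition \ref{twodist}), hence lies in both Selmer groups; its nontriviality for $100\%$ of $f$ is Proposition \ref{difdist}. But there are two problems with your upper bound. First, the parametrization is not what you describe: there is no subvariety $V_{12}$ of a product of the Bhargava--Gross and Shankar--Wang spaces. The paper uses a \emph{single} representation, $V_1 \otimes V_2$ for $\SO_n\times\SO_n$, whose regular semisimple orbits with invariants $f$ inject into $H^1(\Q,\Delta)$ with stabilizer $\Delta \cong J_1[2]\cong J_2[2]$; each orbit carries \emph{two} pencils of quadrics whose Fano varieties are torsors under $J_1$ and $J_2$ respectively, and $(1,2)$-solubility is the condition that both have local points everywhere. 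The intersection of the two Selmer groups is thus cut out by a solubility condition on orbits of one representation, not by passing to a smaller variety.

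Second, and more seriously, the step you defer --- ``the contribution of the remaining irreducible integral orbits vanishes on average'' --- is the actual content of the theorem, and you give no mechanism for it. It is not a geometry-of-numbers statement (the main body of the fundamental domain contributes a genuinely positive count of irreducible locally $1$-soluble orbits; that is why the $2$-Selmer average is only bounded by $6$). What kills the $(1,2)$-soluble count is an arithmetic local-density argument (Proposition \ref{diverges}): for each large $p$, a set of invariants of density $\gtrsim r/p$ in $\Inv(\F_p)$ --- those where $f$ has $n$ distinct roots over $\Z_p$, exactly one of positive (necessarily even) valuation, with the product of the unit roots a square --- has the property that $pf(p)$ is a $p$-adic square, producing a point on $C_2$ whose descent image has a component of odd valuation. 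Since the image of $J_1(\Q_p)$ is unramified for such invariants, the images of $J_1(\Q_p)$ and $J_2(\Q_p)$ in $H^1(\Q_p,\Delta)$ differ there, so the local mass of $(1,2)$-soluble orbits is at most $1-a/p$. The product $\prod_p(1-a/p)$ diverges to $0$, and this is what forces the non-distinguished contribution to vanish. Without this (or an equivalent) argument, your proof establishes only an upper bound strictly larger than $2$.
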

The group $\Sel_{(1,2)}$ always contains the identity of $H^1(\Q, \Delta)$, and the image of $(P_1) - (\infty_1) \in J_1(\Q)/2J_1(\Q) \subset H^1(\Q,\Delta)$. Theorem \ref{12selm} implies that 100 \% of the time, $\Sel_{(1,2)}$ contains nothing else. 

There are canonical maps $C \rightarrow C_1$ ($(x,y) \mapsto (x^2,y)$) and $C \rightarrow C_2$ ($(x,y) \mapsto (x^2,xy)$).  The Jacobians of these curves form the exact sequence
$$0 \longrightarrow \Delta \longrightarrow J_1 \times J_2 \xrightarrow{\ \phi\ } J \longrightarrow 0$$
where $J$ denotes the Jacobian of $C$. Since the $\phi$-Selmer group of $J_1 \times J_2 \rightarrow J$ equals $\Sel_{(1,2)}$, we obtain the following corollary to Theorem \ref{12selm}.
\begin{Corollary}
Let notation be as above. When $f \in \mathcal{B}$ is ordered by height, the average size of the $\phi$-Selmer group is $2$. 
\end{Corollary}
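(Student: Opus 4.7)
The plan is to deduce this corollary directly from Theorem \ref{12selm} by verifying the equality $\Sel_\phi = \Sel_{(1,2)}(f)$ asserted in the paragraph above, viewed as subgroups of $H^1(\Q, \Delta)$. This identification is the entire content of the argument; once established, Theorem \ref{12selm} immediately yields the claim.

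To match the local conditions, fix a place $v$ of $\Q$ and consider the long exact cohomology sequence attached to $0 \to \Delta \to J_1 \times J_2 \xrightarrow{\phi} J \to 0$. The image of the connecting map $J(\Q_v)/\phi((J_1 \times J_2)(\Q_v)) \hookrightarrow H^1(\Q_v, \Delta)$ equals the kernel of the map $\iota_*: H^1(\Q_v, \Delta) \to H^1(\Q_v, J_1 \times J_2) = H^1(\Q_v, J_1) \times H^1(\Q_v, J_2)$. Composing the inclusion $\Delta \hookrightarrow J_1 \times J_2$ with the projection onto $J_i$ recovers, under the identification $\Delta \simeq J_i[2]$, the natural inclusion $J_i[2] \hookrightarrow J_i$. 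Therefore $\iota_*$ is the product of the two natural maps $H^1(\Q_v, J_i[2]) \to H^1(\Q_v, J_i)$, and by the long exact sequence for multiplication by $2$ on $J_i$, the kernel of the $i$-th such map is precisely the image of $J_i(\Q_v)/2J_i(\Q_v)$ in $H^1(\Q_v, J_i[2])$. It follows that a class $c \in H^1(\Q_v, \Delta)$ satisfies the $\phi$-Selmer local condition at $v$ if and only if it simultaneously satisfies both local $2$-Selmer conditions at $v$, so $\Sel_\phi = \Sel_2(J_1) \cap \Sel_2(J_2) = \Sel_{(1,2)}(f)$.

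The main technical care lies in fixing consistent identifications $\Delta \simeq J_1[2] \simeq J_2[2]$, so that the two descriptions of the local conditions live inside the same copy of $H^1(\Q_v, \Delta)$ and the diagram of short exact sequences that I implicitly invoked genuinely commutes. Once these identifications are chosen compatibly, the diagram chase outlined above is routine. Combining this identification of Selmer groups with Theorem \ref{12selm} concludes the proof.
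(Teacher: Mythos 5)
Your proof is correct and takes the same route as the paper: the paper simply asserts that the $\phi$-Selmer group of $J_1 \times J_2 \rightarrow J$ equals $\Sel_{(1,2)}(f)$ and then invokes Theorem \ref{12selm}, while you supply the standard comparison of local conditions (via the long exact sequence for $\phi$ and the long exact sequences for multiplication by $2$ on each $J_i$) that justifies that assertion. The one point left implicit in both treatments --- that the identifications $\Delta \simeq J_i[2]$ coming from the kernel of $\phi$ are compatible with those used to define $\Sel_{(1,2)}(f)$ inside $H^1(\Q,\Delta)$ --- is appropriately flagged in your write-up.
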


The structure of the paper of the paper follows \cite{BG}. In \S2, we consider the representation $n \otimes n$ of the split semisimple group $\SO_n \times \SO_n$, where $n = 2m+1$ is an odd integer. By viewing this representation in the Vinberg setting, a vector in this representation can be viewed as a self-adjoint operator on a $2n$-dimensional vector space, whose characteristic polynomial is of the form 
$$f(x^2) = x^{2n} + a_1x^{2(n-1)} + \hdots a_{n-1}x^2 + e^2.$$
The functions $a_1, \hdots a_{n-1}, e$ are invariant under the action of $\SO_n \times \SO_n$. In fact, these freely generate the ring of $\SO_n \times \SO_n$-invriants. A point in the invariant space is said to be regular semisimple if the corresponding polynomial $f(x^2)$ is separable. Using Thorne's work \cite{Thorne}, we demonstrate the existence of two sections $\kappa_1$ and $\kappa_2$ from the space of invariants to $n\otimes n$. Further, we prove that the orbit of $\kappa_i(c)$ is {\it distinguished} (which we define in \S 2), where $c$ is regular semisimple. 

In \S 3, we prove that the regular semisimple invariants separate geometric $\SO \times \SO$ orbits. Using the language of \cite{AIT}, we describe in \S 4 how geometric orbits break up over arbitrary fields.

In \S 5, we associate two pencils of quadrics to each $\SO_n \times \SO_n$-orbits on $n \otimes n$. The theory developed in \cite{Jerrythesis} realises the Fano variety of these pencils as torsors for $J_1$ and $J_2$, where $J_1$ is the Jacobian of the curve $y^2 = f(x)$, and $J_2$ is the Jacobian of the curve $y^2 = xf(x)$. We prove that there is a bijection between the 2-Selmer group of $J_1[2]$, and rational orbits with these invariants such that the first Fano variety has points over $\Q_v$ for every place $v$. We call these orbits {\it locally soluble orbits}. Similarly, there is a bijection between the intersection of the 2-Selmer groups of $J_1$ and $J_2$, and rational orbits such that both the Fano-varieties have points over $\Q_v$ for all places $v$. We call these orbits \textit{locally (1,2)-soluble orbits}. A crucial ingredient needed to prove Theorems \ref{2selm} and \ref{12selm} is to demonstrate the existence of integral representatives (with a minor technical condition at the place 2) of locally soluble $\SO_n(\Q) \times \SO_n(\Q)$ orbits, which have integral invariants. This is done in \S 6, and the result we prove is stated as Theorem \ref{intorb}. 

Having parameterised the Selmer groups in terms of integral soluble and (1,2)-soluble orbits, we use Bhargava's geometry of numbers techniques (\cite{BQuartic}) to estimate the number of these orbits. In order to do this, in \S 7 we count the number of points inside a fundamental domain for the action of $\SO_n(\Z) \times \SO_n(\Z)$ on $\R^n \otimes \R^n$. This fundamental domain splits into two parts: the main body, which we prove contains a negligible number of distinguished orbits; and the cusp, which we prove contains predominantly distinguished orbits. 

In \S 8, we impose appropriate congruence conditions to pass from integral orbits to locally soluble integral orbits, or to locally (1,2)-soluble integral orbits. In the first case, the main body will contribute on average at most four Selmer elements (based on work in progress of Manjul Bhargava, Arul Shankar and Xiaoheng Wang, we expect that the contribution will be exactly four Selmer elements on average), and the cusp will correspond to the distinguished orbits, which are the marked elements in the Selmer group. This gives that the average size of the 2-Selmer group is bounded by 6, proving Theorem \ref{2selm}. In the second case, we prove that the product of the local densities diverges to zero (Proposition \ref{diverges}), and so the only contribution to the average comes from the cusp. This proves Theorem \ref{12selm}.

In future work, we will use these results to bound the number of rational points of curves in these familes. 

\subsection*{Acknowledgements}
I am very grateful to Benedict Gross for suggesting that I work on this project, and also for numerous useful conversations. I also thank Manjul Bhargava, Chao Li, Arul Shankar, Jack Thorne, Cheng-Chiang Tsai and Xiaoheng Wang for helpful discussions. Finally, I would like to thank Arul Shankar and Xiaoheng Wang for very helpful comments on previous versions of this paper. 
\section{A representation of $\rm{SO_n} \times \rm{SO_n}$}
Let $k$ be a field of characteristic different from $2$. In this section, we consider the action of $\rm{SO}_n \times \rm{SO}_n$ on $n \otimes n$. 

\subsection{Vinberg theory}
The above representation is in the Vinberg setting. Indeed, let $(V_1, Q_1, \epsilon_1)$ be an $n$-dimensional split orthogonal space, with discriminant 1 with respect to the basis $\epsilon_1$ of $\bigwedge^{\topp}(V_1)$, and $(V_2,Q_2,\epsilon_2)$ be an $n$-dimensional split orthogonal space of discriminant $(-1)^n$ (the discriminant is again relative to $\epsilon_2 \in \bigwedge^{\topp}(V_2)$).  Consider the $2n-$dimensional split orthogonal space $V = V_1 \oplus V_2, Q = Q_1 \oplus Q_2$, and the special orthogonal group $G = \rm{SO}(V)$ (the discriminant condition on $(V_2,Q_2)$ is imposed so that the space $(V,Q)$ is split). 

We consider the involution $\theta$ of $\rm{SO}(V)$, given by conjugation by the element $(\Id_n,-\Id_n) \in \GL(V_1) \times \GL(V_2)$. Let $G^{\theta}$ be the subgroup fixed by $\theta$. It is the intersection of $\rm{O}(V_1) \times \rm{O}(V_2)$ with $\rm{SL}(V)$, inside $\rm{GL}(V)$. The involution $\theta$ also acts on the lie algebra $\mathfrak{g}$. Let $\mathfrak{g}_1$ denote the $-1$ eigenspace. It consists of skew self-adjoint operators on $V$ whose diagonal blocks (with respect to the decomposition $V=V_1 \oplus V_2$) are $0$.

The action of $G^{\theta}$ on $\mathfrak{g}$ preserves the eigenspace $\mathfrak{g}_1$. As representations of $G^{\theta}$, $\mathfrak{g}_1 \cong V_1 \otimes V_2$. The isomorphism can be described as follows:  given an element $\alpha \in V_1 \otimes V_2$, we think of it as an operator $T_1:$ $V_1 \rightarrow V_2$ using the bilinear form on $V_1$. Similarly, we also get an operator $T_2:$ $V_2 \rightarrow V_1$. The operator $T_1 \oplus (-T_2) \in \mathfrak{g}_1$, i.e. is a skew-self adjoint operator on $V$ with block diagonals zero. 

Notice that the space $W$, consisting of self-adjoint operators on $V$ with block diagonal zero, is also a representation of $G^{\theta}$. This representation is also isomorphic to $V_1 \otimes V_2$,  where $\alpha$ would map to $T_1 \oplus T_2$.

The $G$-invariant functions on $\mathfrak{g}$ restrict to $G^{\theta}$-invariant functions on $\mathfrak{g}_1$. Let $T'\in \mathfrak{g}$. Since $T'$ is skew self-adjoint, the coefficients of the odd powers of the characteristic polynomial will all be zero. Suppose that the characteristic polynomial of $T'$ is $g_1(x) = f_1(x^2) = x^{2n} + b_1 x^{2n-2} + \hdots + b_{n-1}x^2 + b_n$. Because $T'$ is skew self-adjoint, $b_n = (-1)^ne^2$, where $e$ is the pfaffian of $T'$. The functions $b_1, \hdots b_{n-1}, e$ freely generate the ring of $G$-invariant functions on $\mathfrak{g}$. By Vinberg's theory, the ring of $G^{\theta}$-invariant functions on $\mathfrak{g}_1$ is freely generated by $b_1, \hdots, b_{n-1}, e$ if the characteristic of $k$ is 0 (\cite[Theorem 3.6]{Pany}). 

If the characteristic polynomial of the associated skew self-adjoint operator is as above, the characteristic polynomial of the associated self-adjoint operator $T_1 \oplus T_2$ will just be $g(x) = f(x^2) = x^{2n} + a_1 x^{2n-2} + ... a_{n-1}x^2 + a_n$ with $a_i = (-1)^i b_i$. Note that we now get $e^2 = a_n$. Henceforth, we will think of $\alpha \in V_1 \otimes V_2$ as a self-adjoint operator $T$ on $V$ with block diagonal zero. For notational reasons, we will use the symbol $\alpha$ when we want to talk of an element of $V_1 \otimes V_2$ in the abstract, and we will use the symbol $T$ when we want to think of $\alpha$ as a self-adjoint operator on $V$. We call $g$ the characteristic polynomial of $\alpha$.  

The invariants $a_i,e$ are homogenous functions, with the degree of $a_i$ being $2i$, and the degree of $e$ being $n$. Note that the sum of the degrees of the invariants is $n^2$, the dimension of $V_1\otimes V_2$. 

Define $\Inv = \Spec k[a_1,a_2, \hdots, a_{n-1}, e]$. If the characteristic of $k$ is 0, then $\Inv \simeq V_1 \otimes V_2 \sslash G^{\theta}$, the GIT quotient of $V_1 \otimes V_2$ by $G^{\theta}$. Even if $k$ has positive characteristic, there is still a $G^{\theta}$-equivariant map from $ V_1 \otimes V_2$ to $\Inv$, where the action of $G^{\theta}$ on $\Inv$ is trivial. In either case, $\pi: V_1 \otimes V_2 \rightarrow \Inv$ denote the $G^{\theta}$-equivariant map. 

\begin{Definition}
We say that an element $\alpha \in V_1 \otimes V_2$ is regular semisimple if its characteristic polynomial splits into distinct linear factors over $k^{\sep}$, i.e. if the discriminant of $g(x)$ is different from zero. 
\end{Definition}
Note that $\alpha$ will be regular semisimple if and only if the polynomial $f$ has non-zero discriminant and $f(0) \neq 0$. In terms of the map $\pi$, the regular semisimple locus in $V_1 \otimes V_2$ equals $\pi ^{-1}(\Invrs)$, where $\Invrs$ is the locus where $e$ and the discriminant of $f$ are both non-zero. 

Henceforth, we assume that $n$ is an odd integer. Let $n =2m+1$. 
\subsubsection*{Regular nilpotent orbits and Kostant sections}
For this paragraph, we assume that the characteristic of $k$ is zero. We also think of $V_1 \otimes V_2$ as being $\mathfrak{g}_1$, i.e. as skew self-adjoint operators with block diagonal zero. 

\begin{Proposition}\label{regnilp}
There are exactly two distinct $G^{\theta}(k)$-orbits of regular nilpotent elements of $\mathfrak{g}_1$. 
\end{Proposition}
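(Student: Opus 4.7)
The plan is to classify the regular nilpotent orbits first over $\kbar$ via an analysis of Jordan structure compatible with the $\theta$-grading, and then descend to $k$ using the two Kostant sections promised in the introduction.

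For the geometric step, view $\alpha \in V_1 \otimes V_2$ as the associated skew self-adjoint operator $T$ on $V$, block-antidiagonal with blocks $T_1 \colon V_1 \to V_2$ and its adjoint $-T_1^*$. Since $T \in \mathfrak{g}_1$ anti-commutes with $\theta$, it interchanges $V_1$ and $V_2$, and so the $\theta$-eigenvalues $\pm 1$ alternate along every Jordan chain of $T$. Consequently, a Jordan block of odd length $\ell$ contributes $(\ell+1)/2$ basis vectors to one of $V_1, V_2$ and $(\ell-1)/2$ to the other. A dimension count ($\dim V_1 \otimes V_2 - \dim \Inv = n^2 - n = \dim G^\theta$) shows that a Vinberg-regular nilpotent has trivial $G^\theta$-stabilizer, and tracing through the alternation constraint forces the Jordan type of $T$ on $V$ to be the principal partition $[2n-1, 1]$ of $\mathfrak{so}_{2n}$. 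With $\dim V_1 = \dim V_2 = n$, the only way to accommodate this Jordan type under the alternation is to place the length-$1$ block either entirely in $V_1$ or entirely in $V_2$, giving exactly two combinatorial configurations. Since $G^\theta$ preserves $V_1$ and $V_2$ individually, these yield two disjoint $G^\theta(\kbar)$-orbits; within each configuration, a standard-form argument (choosing adapted bases and absorbing a sign into the determinant-one condition) shows the orbit is unique.

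For the descent step, I would invoke the two Kostant-type sections $\kappa_1, \kappa_2 \colon \Inv \to V_1 \otimes V_2$ whose existence is asserted in the introduction, built from Thorne's construction in the Vinberg setting. Evaluating at $0 \in \Inv$ gives $k$-rational regular nilpotent representatives $\kappa_1(0)$ and $\kappa_2(0)$, placed by the section construction into opposite geometric configurations from the previous paragraph. Uniqueness of the $G^\theta(k)$-orbit inside each geometric orbit reduces to showing the $G^\theta$-stabilizer of $\kappa_i(0)$ is trivial: by Jacobson--Morozov in the graded setting, $\kappa_i(0)$ fits into a normal $\mathfrak{sl}_2$-triple whose semisimple part $h_i \in \mathfrak{g}_0$ is regular in its Cartan subalgebra, and the rigidity of this triple together with the dimension count above forces the stabilizer to be trivial. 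Since Galois cohomology of the trivial group vanishes, each geometric orbit is a single $G^\theta(k)$-orbit, yielding exactly two.

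The main obstacle is the stabilizer triviality in the descent step. The Jordan-chain analysis is elementary once the $\theta$-grading is built in, but certifying that the two Kostant-section representatives do not split further over an arbitrary characteristic-zero field $k$ requires controlling the $G^\theta$-centralizer of $\kappa_i(0)$ explicitly. This amounts to unpacking the Vinberg analogue of Kostant's slice in the form developed by Thorne, and the proof should proceed by reducing the calculation to that reference once the two sections are constructed.
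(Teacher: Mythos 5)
Your geometric analysis (two configurations distinguished by the $\theta$-eigenvalue on the line $\ker T/(\ker T\cap \mathrm{im}\,T)$ coming from the length-one Jordan block) is sound, but the descent step contains a genuine error: the $G^{\theta}$-stabilizer of a regular nilpotent is \emph{not} trivial. The center $Z=\{\pm \Id_{2n}\}$ of $G=\SO(V)$ is fixed by $\theta$, lies in $G^{\theta}$, and acts trivially on all of $\mathfrak{g}_1$, so it is contained in every stabilizer; your dimension count $\dim(V_1\otimes V_2)-\dim\Inv=\dim G^{\theta}$ shows at most that the stabilizer is finite. Hence ``Galois cohomology of the trivial group vanishes'' does not apply: the $G^{\theta}(k)$-orbits inside one geometric orbit are classified by $\ker\bigl(H^1(k,\mu_2)\to H^1(k,G^{\theta})\bigr)$, and you must prove this kernel is trivial. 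It is — a nontrivial class $d\in k^{\times}/k^{\times 2}$ maps, via the central embedding $Z\hookrightarrow \mathrm{O}(V_1)$, to the twist of $Q_1$ by the scalar $d$, which changes the discriminant of the odd-dimensional form $Q_1$ and so is already nontrivial in $H^1(k,\mathrm{O}(V_1))$ — but this argument is absent from your write-up, and Jacobson--Morozov rigidity cannot supply it, since the center commutes with any $\mathfrak{sl}_2$-triple.

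There is also a logical-order problem: the two Kostant sections $\kappa_1,\kappa_2$ are \emph{constructed from} the two regular nilpotent orbits (one fixes representatives $E_1,E_2$ and completes them to normal $\mathfrak{sl}_2$-triples), so you cannot invoke them to produce the $k$-rational representatives; instead exhibit $E_1,E_2$ explicitly, as the paper does immediately after the proposition. For comparison, the paper's proof bypasses both the Jordan-form analysis and the $H^1(k,\mu_2)$ computation: it quotes Thorne's Lemma 2.13, which asserts that $G_{\ad}^{\theta}(k)$ acts \emph{simply transitively} on regular nilpotents of $\mathfrak{g}_1(k)$, and then checks that $(G^{\theta}/Z)(k)$ always has index two in $G_{\ad}^{\theta}(k)$ (using the existence of $g\in G(k)$ with $\theta(g)=-g$), which yields exactly two orbits over every field $k$ simultaneously. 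Your route can be repaired along the lines above, but as written the key stabilizer claim is false and the descent does not go through.
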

\begin{proof}
Let $Z$ and $G_{\ad}$ denote the center and adjoint of $G$ respectively. Clearly, $\theta$ acts trivially on $Z$, and hence descends to an involution of $G_{\ad}$. Let $(G_{\ad})^{\theta}$ denote the subgroup of $G_{\ad}$ fixed by $\theta$. 

We have that $(G_{\ad})^{\theta}(\overline{k}) = \{g \in G(\overline{k})| \theta(g) \in Z(\overline{k}) g \}/Z(\overline{k})$. Clearly, $G^{\theta}/Z$ is a subgroup of $(G_{\ad})^{\theta}$, and the description of $\overline{k}$-points shows that the inclusion has index two. The group $G^{\theta}/Z$ is isomorphic to $\SO(V_1) \times \SO(V_2)$, and is therefore the connected component of  $(G_{\ad})^{\theta}$. Finally, it is easy to see that there always exists $g \in G(k)$ with the property $\theta(g) =-g$. It follows that $(G^{\theta}/Z)(k)$ is always an index-two subgroup of $G_{\ad}^{\theta}(k)$. 

By \cite[Lemma 2.13]{Thorne}, $G_{\ad}^{\theta}(k)$ acts simply transitively on the set of regular nilpotent elements of $\mathfrak{g}_1(k)$. Therefore, the action of $(G^{\theta}/Z)(k)$ on the set of regular nilpotent elements has two orbits, as required. 
\end{proof}
We now explicitely describe these two conjugacy classes of regular nilpotent elements. By the assumpion on $Q_1$, there exists a basis $\{f_1, \hdots \ f_{2m+1}\}$ of $V_1$, such that the Gram matrix of $Q_1$ is 
\begin{equation}\label{stdform}
B = 
\left[
\begin{array}{ccccc}
&&&&1\\
&&&1&\\
&&\iddots&&\\
&1&&&\\
1&&&&
\end{array}
\right].
\end{equation}
Similarly, there exists a basis $\{f'_1, \hdots \ f'_{2m+1} \}$ of $V_2$ such that the Gram matrix of $Q_2$ is $-B$. Let $E_1 \in \mathfrak{g}_1$ be as follows: 
$$f_1 \rightarrow f'_1, \ f'_1 \rightarrow f_2, \ \hdots \ f'_{m-1} \rightarrow f_m, \ f_m \rightarrow f'_{m+1}, \ \hdots \ f'_{2m+1}\rightarrow f_{2m+1} \rightarrow 0; \ f'_{m} \rightarrow 0.$$
Let $E_2$ be defined the same way, except with the $f_i$ and $f'_i$ swapped. Both $E_1$ and $E_2$ are regular nilpotent elements of $V_1 \otimes V_2$ (thought of as symmetric operators with block diagonal zero). It is easy to see that $E_1$ and $E_2$ are in the same $G_{\ad}^{\theta}(k)$ orbit, but are in different $G^{\theta}(k)$ orbits. 

By \cite[Lemma 2.15]{Thorne}, $E_1$ can be completed to an $\mathfrak{s}\mathfrak{l}_2$ triple $(E_1, F_1, H_1)$ with $F_1 \in \mathfrak{g}_1$ and $H_1 \in \mathfrak{g}^{\theta = 1}$ in a unique way. The same is true for $E_2$ (and we call the triple $(E_2,F_2,H_2)$). Let $\mathfrak{z}(F_1) = \{Z\in \mathfrak{g}_1: \left[Z, F_1\right] =0 \}$ (and let $\mathfrak{z}(F_2)$ be defined analogously). By \cite[Lemma 3.5]{Thorne}, the two $\kappa'_i: E_i + \mathfrak{z}(F_i) \rightarrow \Inv$ (for $i=1,\ 2$) are isomorphisms, and thus give rise to two sections $\kappa_i: \Inv \rightarrow \mathfrak{g}_1$. 

\begin{Definition}
The sections $\kappa_i$ are called Kostant sections.
\end{Definition}

\begin{Proposition}\label{kost}
Let $T' \in E_i + \mathfrak{z}(F_i)$ (where $i$ is either 1 or 2) be a regular semisimple element. Then, there exists $X \subset V_i$ a maximal isotropic subspace (for the quadratic form $Q_i$), with the property $T'^2 X \subset X^{\perp}$ ($^{\perp}$ is taken with respect to the quadratic form $Q_i$).
\end{Proposition}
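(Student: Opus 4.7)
The plan is to exhibit a single maximal isotropic subspace $X \subset V_1$ that works for every $T'$ in the Kostant section $E_1 + \mathfrak{z}(F_1)$; the case $i=2$ proceeds symmetrically. Since the regular nilpotent $E_1 \in \mathfrak{so}(V)$ has Jordan type $(4m+1, 1)$, the $\mathfrak{sl}_2$-triple $(E_1, F_1, H_1)$ equips $V$ with the $\mathfrak{sl}_2$-module decomposition $V = V(4m)\oplus V(0)$. Because $H_1 \in \mathfrak{g}^{\theta=1}$ preserves $V_1$, we obtain a weight grading $V_1 = \bigoplus_w V_1^{(w)}$, and my candidate is $X := V_1^{<0} = \bigoplus_{w<0} V_1^{(w)}$.

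For maximality and isotropy of $X$, the chain description of $E_1$ places $f_{2m+1} \in \ker E_1 \cap V_1$, and this is exactly the highest weight vector of $V(4m)$. Since $\theta$ commutes with $H_1$ but anticommutes with $E_1$, the $\theta$-grading alternates along the weight ladder of $V(4m)$, forcing $V_1 \cap V(4m)$ to consist of the weights $\{4m, 4m-4, \ldots, -4m\}$, while the one-dimensional piece $V(0)$ sits in $V_2$ for dimensional reasons. Thus the $H_1$-weights occurring in $V_1$ are precisely $\{0, \pm 4, \pm 8, \ldots, \pm 4m\}$, each with multiplicity one, so $\dim X = m$, which is the maximal isotropic dimension in the split orthogonal space $V_1$ of dimension $2m+1$. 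Since $Q$ is $\mathfrak{sl}_2$-invariant, $Q_1$ pairs $V_1^{(w)}$ nontrivially only with $V_1^{(-w)}$, so $X$ is isotropic and $X^\perp = V_1^{\leq 0}$.

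The key step is the inclusion $T'^2 X \subset X^\perp$. The crucial algebraic input is that $\mathfrak{z}_{\mathfrak{g}}(F_1)$ is spanned by the lowest weight vectors of the $\mathfrak{sl}_2$-action on $\mathfrak{g}$, whose $H_1$-weights are $-2e_i$ for the exponents $e_i$ of $\mathfrak{so}_{4m+2}$. Since every exponent of $D_{2m+1}$ is a positive integer, every $Z \in \mathfrak{z}(F_1)$ has $H_1$-weight at most $-2$. Combined with the fact that $E_1$ has $H_1$-weight $+2$, this shows that $T' = E_1+Z$ maps $V^{\leq a}$ into $V^{\leq a+2}$, and therefore $T'^2$ maps $V^{\leq a}$ into $V^{\leq a+4}$. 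Applied to $X = V_1^{\leq -4}$, this yields $T'^2 X \subset V^{\leq 0}\cap V_1 = V_1^{\leq 0} = X^\perp$, as required.

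I expect the main obstacle to be the weight bookkeeping: confirming that the highest weight of $V(4m)$ falls into $V_1$, reading off the $H_1$-weights of $V_1$ carefully, and verifying that every exponent of $D_{2m+1}$ is positive. All of these are routine once the setup is in place. It is worth pointing out that the subspace $X$ is independent of $T'$ and that the argument nowhere invokes the regular semisimplicity hypothesis; the inclusion $T'^2 X \subset X^\perp$ holds uniformly over the entire Kostant section.
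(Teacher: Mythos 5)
Your proof is correct, and your candidate subspace $X = V_1^{<0} = \langle f_1,\dots,f_m\rangle$ is exactly the one used in the paper; both arguments are filtration arguments showing that $T' = E_1 + Z$ moves a filtration up by at most one step, so that $T'^2 X \subset V_1^{\le 0} = X^{\perp}$. The only real difference is how one sees that $Z \in \mathfrak{z}(F_1)$ respects the filtration: the paper uses the elementary observation that $Z$ commutes with $F_1$ and hence preserves $\ker F_1^{2m}$ and $\ker F_1^{2m+1}$, whereas you invoke Kostant's description of $\mathfrak{z}_{\mathfrak{g}}(F_1)$ as spanned by lowest-weight vectors of weights $-2e_i$ with all exponents of $D_{2m+1}$ positive --- a heavier but equally valid input, which buys you the clean uniform statement that $T'$ raises $H_1$-weight filtration degree by at most $2$. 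Your closing observations (that $X$ is independent of $T'$ and that regular semisimplicity is never used) are likewise consistent with the paper's proof.
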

\begin{proof}
It suffices to prove the result for $i = 1$. We will show that $X$ can be chosen to be $\langle f_1, f_2, \ \hdots f_m \rangle$ where, $\langle \ \rangle$ denotes the span. We have $X^{\perp} = \langle X, f_{m+1} \rangle$. Notice that the $\langle f_i \rangle$ and $\langle f'_i \rangle$ will be weight-spaces for $H_1$. Hence, we have that $F_1$ acts as follows: 
$$\langle f_1 \rangle \leftarrow \langle f'_1 \rangle, \ \langle f'_1 \rangle \leftarrow \langle f_2 \rangle, \ \hdots \ \langle f'_{m-1} \rangle \leftarrow \langle f_m \rangle, \ \langle f_m \rangle \leftarrow \langle f'_{m+1} \rangle , \ \hdots \ \langle f'_{2m+1}\rangle \leftarrow \langle f_{2m+1} \rangle;$$
$$\langle f_1, f'_{m} \rangle \rightarrow 0.$$
The kernel of $F_1 ^ {2m}$ is $\langle f_1, \hdots f_m, f'_1 \hdots f'_{m+1} \rangle$, and the kernel of $F_1^{2m+1}$ is $\langle f_1, \hdots f_{m+1}, f'_1 \hdots f'_{m+1} \rangle$. Thus, any $Z \in \mathfrak{z}(F_1)$ must preserve the above two subspaces. Therefore, 
$$\displaystyle{T' \langle f_1, \hdots f_m, \ f'_1, \hdots f'_m \rangle \subset \langle f_1, \hdots f_m, \ f'_1 \hdots f'_{m+1} \rangle},$$ 
$$\displaystyle{T' \langle f_1, \hdots f_m, \ f'_1 \hdots f'_{m+1} \rangle \subset \langle f_1, \hdots f_{m+1}, \ f'_1 \hdots f'_{m+1} \rangle}$$ and consequently
$$\displaystyle{T'^2 \langle f_1, \hdots f_m, \ f'_1, \hdots f'_m \rangle \subset  \langle f_1, \hdots f_{m+1}, \ f'_1 \hdots f'_{m+1} \rangle}.$$

The proposition follows from the fact that $T'^2$ preserves $V_1$. 
\end{proof}
Recall that we associated a self-adjoint operator $T$ to each $T' \in \mathfrak{G}_1$, and that $T$ and $T'$ satisfy the relation $T^2 = -T'^2$. Therefore, if $\alpha \in \kappa_i(\Inv)$, the corresponding (self-adjoint) $T$ satisfies the conclusions of Proposition \ref{kost}. To that end, for $k$ having characteristic different from two, we make the following definition.
\begin{Definition}
We call $\alpha \in V_1 \otimes V_2$ and the associated self adjoint operator $T$ $i$-distinguished (for $i = 1,2$) if there exists a maximal isotropic subspace (with respect to the form $Q_i$) $X \subset V_i$ such that $T^2 X \subset X^{\perp}$ ( $^{\perp}$ is taken with respect to $Q_i$).
\end{Definition}

\subsection{$G^{\theta}_0$ from $G^{\theta}$}
Let $G_0^{\theta} = \SO(V_1) \times \SO(V_2)$ be the connected component of $G^{\theta}$ containing the identity. Note that it has index $2$ in $G^{\theta}$. As $n$ is odd, $-\Id_n \in \GL(V)$ is an element of $G^{\theta}$, but not of $G_0^{\theta}$. The element $-\Id_n$  acts trivially on $V_1 \otimes V_2$, and so $V_1 \otimes V_2 \sslash G^{\theta} = V_1 \otimes V_2 \sslash G_0^{\theta}$. Further, for any field $k$, we have that $V_1 \otimes V_2 (k) / G_0^{\theta}(k) = V_1 \otimes V_2 (k) / G^{\theta}(k)$. 

Henceforth we define $G_i$ to be $\rm{SO}(V_i)$ for $i = 1,2$. For ease of notation, let $H = G_1 \times G_2$. Recall that we have identified $W$, the space of self-adjoint operators with block diagonal zero, with $V_1 \otimes V_2$ as $H$ representations. We will henceforth use the notation $W$ for $V_1 \otimes V_2$.

\section{Orbits over a separably closed field}
Recall that we have assumed that $n = 2m+1$ is an odd integer. In this section, we prove that over a separably closed field $k$, regular semisimple elements having the same invariants lie in the same $\Gij(k)$ orbit. 

\begin{Proposition}
Let $S$ and $T$ be regular semisimple elements in $W$, with block diagonal zero. Suppose that $S$ and $T$ have the same invariants. Then, there exists $g \in\Gij(k)$ such that $gSg^{-1} = T$. 
\end{Proposition}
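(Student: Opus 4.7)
The plan is to produce an explicit conjugator by spectral decomposition, then adjust the determinants via the Pfaffian invariant $e$. Write $S$ in block form $\begin{pmatrix}0 & A\\ B & 0\end{pmatrix}$ with $A\colon V_2 \to V_1$ and $B\colon V_1 \to V_2$; self-adjointness of $S$ forces $A$ and $B$ to be $Q$-adjoint to each other, so that $S^2$ preserves each $V_i$, acting as $AB$ on $V_1$ and $BA$ on $V_2$. Both have characteristic polynomial $f(y) = \prod(y - \alpha_i)$, and regular semisimplicity says the $\alpha_i$ are distinct and nonzero. Since $k$ is separably closed, I pick eigenvectors $v_i \in V_1$ of $S^2$ with eigenvalues $\alpha_i$, and set $w_i = S v_i = B v_i \in V_2$. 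Self-adjointness of $S^2|_{V_1}$ with respect to $Q_1$ makes $\{v_i\}$ pairwise $Q_1$-orthogonal, and non-degeneracy together with separable closure allow the normalization $Q_1(v_i, v_i) = 1$; a short calculation then gives $Q_2(w_i, w_i) = Q_1(v_i, S^2 v_i) = \alpha_i$. Carrying out the same procedure for $T$ yields $v_i' \in V_1$ and $w_i' = T v_i' \in V_2$ with the same normalizations.

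Now define $g_1\colon V_1 \to V_1$ by $g_1 v_i = v_i'$ and $g_2\colon V_2 \to V_2$ by $g_2 w_i = w_i'$. The Gram matrices of $\{v_i\}$ and $\{w_i\}$ match those of the primed bases, so $(g_1, g_2) \in O(V_1)(k) \times O(V_2)(k)$, and the identity $(g_1 \oplus g_2) S (g_1 \oplus g_2)^{-1} = T$ is immediate on eigenvectors: $(g_1 \oplus g_2) S v_i = g_2 w_i = w_i' = T v_i' = T (g_1 \oplus g_2) v_i$, with the analogous computation on $V_2$ using $S w_i = \alpha_i v_i$ and $T w_i' = \alpha_i v_i'$.

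The main obstacle is that a priori $(g_1, g_2)$ only lies in $O(V_1)(k) \times O(V_2)(k)$, whereas I need it in $\Gij(k) = \SO(V_1)(k) \times \SO(V_2)(k)$. Here the Pfaffian $e$ carries the day. Passing to the skew self-adjoint versions $S', T' \in \mathfrak{g}_1$, the same element $h = g_1 \oplus g_2 \in O(V)$ conjugates $S'$ to $T'$. Because the Pfaffian transforms by $\mathrm{Pf}(h X h^{-1}) = \det(h)\,\mathrm{Pf}(X)$ for $h \in O(V)$ with $\det(h) = \pm 1$, and because $e = \mathrm{Pf}(S') = \mathrm{Pf}(T')$ is nonzero by regular semisimplicity, I conclude $\det(g_1) \det(g_2) = 1$. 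If both determinants are $+1$, we are done. Otherwise both equal $-1$, and I replace $(g_1, g_2)$ by $(-g_1, -g_2)$: since $n = 2m+1$ is odd, $\det(-g_i) = -\det(g_i) = 1$, while $-I_V$ is central in $\mathrm{GL}(V)$, so the new pair still conjugates $S$ to $T$. This produces the required element of $\Gij(k)$.
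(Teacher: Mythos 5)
Your argument is correct and is essentially the paper's proof: both rest on an explicit orthogonal diagonalization compatible with the splitting $V=V_1\oplus V_2$ (the paper diagonalizes $S$ itself with eigenvalues $\pm\lambda_i$ and uses $w_i\pm w_{-i}$, while you diagonalize $S^2|_{V_1}$ and transport by $S$, which produces the same data), followed by the same Pfaffian computation to force determinant $1$ and the same use of $-\Id_V$ to pass from $G^{\theta}$ to $\SO(V_1)\times\SO(V_2)$. No gaps; the normalization $Q_1(v_i,v_i)=1$ is legitimate since the $v_i$ are an orthogonal basis for a nondegenerate form and $k$ is separably closed of characteristic $\neq 2$.
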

\begin{proof}
Suppose that the common characteristic polynomial is $g(x) = f(x^2)$. If $\lambda$ is an eigenvalue, then so is -$\lambda$. Let $w_{\pm 1}$, $w_{\pm 2}$, ... $w_{\pm n}$ and $w'_{\pm 1}$, $w'_{\pm 2}$, ... $w'_{\pm n}$ be the eigenvectors of $S$ and $T$ respectively, with eigenvalues $\pm \lambda_1$ ... $\pm \lambda_n$. By considering $S^2$, we see that for any $i$, the span of the two vectors $w_{\pm i}$ intersected with $V_1$ (and $V_2$) is one-dimensional. Of course, the same reasoning applies to the span of the two vectors $w'_{\pm i}$. Note that the $w_i$ (and the $w'_i$) form a basis orthogonal for the form on $V$, since $S$ and $T$ are self-adjoint, and the eigenvalues $\pm\lambda_i$ are distinct.

Without loss of generality, assume that $w_i + w_{-i}$ (and the same with $w'$) lies in $V_1$ for every $i$. As $S$ maps $V_1$ to $V_2$, $\lambda_i(w_i - w_{-i})$ and therefore $w_i - w_{-i}$ has to lie in $V_2$. We have that $(w_i+w_{-i},w_i - w_{-i}) = 0$ where $\Bij$ is the bilinear form associated with $Q$, whence $Q(w_i) = Q(w_{-i})$. Clearly, the same happens with the $w'_{\pm i}$. By scaling $w'_{\pm i}$ appropriately, we may assume that $Q(w_{\pm i}) = Q(w'_{\pm i})$. 

Therefore, the transformation $g$ taking the $w_{\pm i}$ to the $w'_{\pm i}$ is orthogonal (the bases $w_i$ and $w'_i$ being orthogonal for the form), and conjugation by $g$ takes $T$ to $S$. Finally, the fact that the $(w_i + w_{-i})$ ({\it resp.} $(w_i - w_{-i})$) span $V_1$ ({\it resp.} $V_2$) implies that $g$ preserves $V_1$ ({\it resp.} $V_2$). Therefore, $g$ lies in the subgroup $\rm{O}(V_1) \times \rm{O}(V_2)$. Conjugating by $g$ multiplies the pfaffian by the determinant of $g$. The fact that the two operators $S$ and $T$ have the same pfaffian forces $g$ to lie in $\SL(V)$, which means that $g \in G^{\theta}$. As remarked at the end of the previous section, we can choose $g$ be an element of $\Gij(k)$. 
\end{proof}

The corollary below follows directly from the above proposition. 
\begin{Corollary}\label{gorb}
For $k$ a separably closed field, two regular semisimple elements $\alpha_1$ and $\alpha_2$ lie in the same $\Gij (k)$-orbit if and only if they have the same invariants.
\end{Corollary}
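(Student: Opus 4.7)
The plan is brief because the preceding Proposition carries essentially all of the content; the Corollary just repackages it as a biconditional and restates it in terms of $\alpha \in V_1 \otimes V_2$ rather than the associated operator.

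First I would dispatch the ``only if'' direction, which is routine. If $\alpha_2 = g \cdot \alpha_1$ for some $g \in H(k)$, then the invariants $a_1, \ldots, a_{n-1}, e$, which were shown in Section 2.1 to generate the ring of $H$-invariant functions on $W$, take the same value on $\alpha_1$ and $\alpha_2$. Concretely, the $a_i$ are coefficients of the characteristic polynomial of the associated self-adjoint operator and are therefore $\GL(V)$-conjugation invariant, while $e$ is the Pfaffian of the associated skew self-adjoint operator, preserved by $\SL(V) \supset H$.

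For the converse, I would simply unwind the $H$-equivariant identification of $W = V_1 \otimes V_2$ with the space of self-adjoint operators on $V = V_1 \oplus V_2$ having vanishing diagonal blocks (recorded in Section 2.1), and invoke the preceding Proposition with $S := T_{\alpha_1}$ and $T := T_{\alpha_2}$. The Proposition produces $g \in H(k)$ conjugating $S$ to $T$, which under the identification is precisely $g \cdot \alpha_1 = \alpha_2$.

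I anticipate no real obstacle: the substantive work has already been carried out in the Proposition, where common eigenspaces of $S$ and $T$ are paired off compatibly with the decomposition $V = V_1 \oplus V_2$ (placing $g$ in $\mathrm{O}(V_1) \times \mathrm{O}(V_2)$) and the matching Pfaffians force $g \in \SL(V)$, after which the remark at the end of Section 2.2 lets us take $g \in H(k) = G^\theta_0(k)$. The Corollary's only role is to observe that these identifications are $H$-equivariant, so orbits on $W$ correspond to orbits on the operator model, and to append the trivial ``only if'' direction to the Proposition's ``if''.
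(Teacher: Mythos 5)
Your proposal is correct and matches the paper, which simply states that the corollary follows directly from the preceding proposition: the ``if'' direction is exactly that proposition applied to the associated self-adjoint operators, and the ``only if'' direction is immediate since the invariants are $\Gij$-invariant functions. Nothing further is needed.
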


\section{Orbits over an arbitrary field}
In this section, we first demonstrate the existence of rational orbits with a given set of invariants, and describe using the theory developed in \cite{AIT} how a geometric orbit decomposes into rational orbits. 
\subsection{Existence of orbits with a given set of invariants}
Fix a set of invariants $c=(a_1, a_2, a_3, \hdots a_{n-1},e) \in \Invrs(k)$. Recall that we have associated to $c$, two polynomials $f(x) = x^n + a_1 x^{n-1} + \hdots a_{n-1}x + e^2$, and $g(x) = f(x^2)$. The existence of the Kostant sections prove that when the characteristic of $k$ is zero, the set of $k$-rational orbits with invariants $c$ is non-empty, i.e. $\pi^{-1}(c)(\overline{k})$ contains a $k$-rational point. We construct an explicit $T \in \Vij(k)$ with invariants $c$. The construction holds for $k$ having characteristic different from two. 

Let $L$ = $k[x]/(f(x))$ and let $M$ = $k[x]/(g(x))$. There is an embedding of $k$-algebras $L \hookrightarrow M$ such that $x \mapsto x^2$. Let $\sigma$ be the non-trivial automprphism of $M$ which leaves $L$ fixed. Let $\beta$ and $\gamma$ be the images of $x$ in $M$ and $L$ respectively. By definition, $\beta$ = $\gamma ^2$ and $\sigma$ sends $\beta$ to $- \beta$. Define a symmetric bilinear form $\B$ on $M$ by setting 
$$(\lambda , \mu) = \Tr_{M/k}(f'(\gamma) \lambda \mu). $$ 
For $\mu \in L$, we have that $\Tr_{M/k}\beta \mu = 0$. Therefore $\B$ breaks up into a direct sum of split bilinear forms on $L$ and $L \beta$. An easy computation shows that these spaces have discriminants $1$ and $(-1)^n=(-1)$ respectively. Isometrically identifying $L$ and $L\beta$ with $V_1$ and $V_2$ respectively, the self-adjoint operator $T_{\beta}$ on $M$ (given by multiplication by $\beta$) pulls back to a self-adjoint operator $T$ on $V$. Clearly, $T$ is self-adjoint, maps $V_1$ to $V_2$ and $V_2$ to $V_1$, and therefore corresponds to an orbit of $\alpha_1 \in W(k)$ of our representation. The invariants of $T$ are $c$ by construction, up to the sign of the pfaffian. To obtain an operator with the same invariants except for the sign of the phaffian being reversed, replace $\beta$ by $-\beta$. 

For $c \in \Invrs(k)$, let $\alpha_1(c) \in \Vij(k)$ denote the orbit just constructed. 

\begin{Proposition} \label{stabis}
The stabilizer $\Stab_c$ of $\alpha_1(c)$ is isomorphic to the kernel of the norm map $\Res_{L/k}(\mu _2) \rightarrow \ \mu_2$.
\end{Proposition}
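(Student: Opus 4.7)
The strategy is to realize every element of $\Stab_c$ as multiplication by a square root of unity in $L$ satisfying a norm condition. Since $c$ is regular semisimple, $g(x) = f(x^2)$ is separable, so $M = k[x]/(g(x))$ is \'etale over $k$ and is generated as a $k$-algebra by $\beta$. Therefore the centralizer of $T = T_\beta$ (which acts on $V \cong M$ as multiplication by $\beta$) in $\mathrm{End}_k(M)$ is exactly $M$, and this persists after base change to any $k$-algebra $R$. Consequently, any element of $\Stab_c(R)$ must act on $V \otimes_k R$ as multiplication by some $u \in (M \otimes_k R)^\times$.

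Next I would translate the conditions that such a $u$ defines an element of $H(R) = \SO(V_1)(R) \times \SO(V_2)(R)$. Writing $u = a + b\beta$ with $a, b \in L \otimes_k R$, the requirement that multiplication by $u$ preserves $V_1 = L$ (applied to $1 \in L$) forces $b = 0$, so $u \in (L \otimes_k R)^\times$; preservation of $V_2 = L\beta$ is then automatic. Preservation of the bilinear form $(\lambda,\mu) = \Tr_{M/k}(f'(\gamma)\lambda\mu)$ becomes $\Tr_{M/k}(f'(\gamma)(u^2 - 1)\lambda\mu) = 0$ for all $\lambda, \mu$, which by nondegeneracy of the trace form and invertibility of $f'(\gamma)$ in $L \otimes_k R$ forces $u^2 = 1$. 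Finally, both $V_1 \cong L$ and $V_2 \cong L\beta$ (via multiplication by $\beta$, an isomorphism because $\beta \in M^\times$, as $g(0) = e^2 \neq 0$) are free $L$-modules of rank one, so multiplication by $u \in L^\times$ has determinant $N_{L/k}(u)$ on each; requiring each component to lie in $\SO$ rather than the full orthogonal group becomes $N_{L/k}(u) = 1$.

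Combining these three conditions, for every $k$-algebra $R$,
$$\Stab_c(R) = \bigl\{\, u \in \mu_2(L \otimes_k R) : N_{(L \otimes_k R)/R}(u) = 1 \,\bigr\},$$
which is precisely the $R$-points of $\ker\bigl(N\colon \Res_{L/k}(\mu_2) \to \mu_2\bigr)$. As this identification is manifestly functorial in $R$, it yields the claimed isomorphism of group schemes.

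The main subtlety is the first step---establishing that the centralizer of $T_\beta$ is $M \otimes_k R$ after arbitrary base change. This rests on the separability of $g$, which is precisely where the regular semisimple hypothesis enters; the remaining conditions are then routine bookkeeping once the identifications $V_1 \cong L$ and $V_2 \cong L\beta$ are in place.
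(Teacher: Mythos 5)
Your argument is correct and follows essentially the same route as the paper: identify the centralizer of $T_\beta$ with $M^\times$ acting by multiplication, cut down to $L^\times$ by requiring preservation of $V_1$ and $V_2$, impose $u^2=1$ from orthogonality, and impose $N_{L/k}(u)=1$ from the determinant condition. Your version is slightly more careful in phrasing everything functorially in an arbitrary $k$-algebra $R$, which is what one needs to get an isomorphism of group schemes rather than just of $k$-points, but the substance is identical.
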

\begin{proof}
The stabilizer $\Stab_c$ of this orbit is the space of orthogonal linear transformations on $M$ which preserve $L$ and $L\beta$, commute with $T_\beta$, have determinant $1$ when restricted to $L$ and $L\beta$.

As the orbit is regular semisimple, the centralizer of $T_\beta$ in $\GL(M)$ ($M$ thought of as a $k$-vector space) is $M\x$ (acting on $M$ by mutiplication). That $\Stab_c$ is a subgroup of $\SO(M)$ implies that only elements $\lambda \in M\x$ of the form $\lambda^2 = 1$ are allowed. We have $M\x \cap \GL(V_1) \times \GL(V_2) = L\x$, therefore $\lambda$ to belong to $L\x$. Finally, the determinant condition forces $N_{L/k}(\lambda) = 1$. The proposition follows. 
\end{proof}

\begin{Corollary}\label{bigstabred}
We have $\Stab_c(k) \neq \{1\}$ precisely when the polynomial $f$ is not irreducible. 
\end{Corollary}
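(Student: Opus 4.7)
The plan is to unwind Proposition \ref{stabis} on $k$-points. Since the characteristic of $k$ is not $2$, we have $\Res_{L/k}(\mu_2)(k) = \mu_2(L)$, and the norm map in question is the usual norm $N_{L/k}: \mu_2(L) \to \mu_2(k) = \{\pm 1\}$. So the task reduces to determining when this norm map has a nontrivial kernel.

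Next I would factor $f = \prod_{i=1}^r f_i$ into distinct irreducibles over $k$ (distinct because $c$ is regular semisimple, so $f$ is separable). By CRT, $L \simeq \prod_{i=1}^r L_i$ where $L_i = k[x]/(f_i)$ is a field, so $\mu_2(L) = \prod_{i=1}^r \{\pm 1\}$. For a tuple $(\epsilon_1, \ldots, \epsilon_r)$, the norm decomposes as $N_{L/k}(\epsilon_1, \ldots, \epsilon_r) = \prod_i N_{L_i/k}(\epsilon_i) = \prod_i \epsilon_i^{\deg f_i}$. Thus, writing everything additively in $\mathbb{F}_2$, $\Stab_c(k)$ is isomorphic to the kernel of the $\mathbb{F}_2$-linear functional $(\epsilon_i) \mapsto \sum_{i : \deg f_i \text{ odd}} \epsilon_i$ on $\mathbb{F}_2^r$.

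To conclude, I would use the assumption that $n = \deg f = \sum_i \deg f_i$ is odd. This forces at least one $\deg f_i$ to be odd, so the above functional is not identically zero. Therefore its kernel has size $2^{r-1}$. If $f$ is irreducible, then $r = 1$ and the kernel is trivial, giving $\Stab_c(k) = \{1\}$. If $f$ is reducible, then $r \geq 2$ and the kernel has size at least $2$, so $\Stab_c(k) \neq \{1\}$. This gives the desired equivalence; no step is a serious obstacle, as the only substantive input is the parity of $n$ together with the product decomposition of $L$.
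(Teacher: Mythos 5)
Your proof is correct and takes essentially the same route as the paper, which simply observes that $L$ is a field precisely when $f$ is irreducible and then invokes Proposition \ref{stabis}; you have just made explicit the computation of the norm-one subgroup of $\mu_2(L)$ that the paper leaves implicit, correctly isolating the parity of $n$ as the input needed for the irreducible direction.
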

\begin{proof}
The etale algebra $L$ is a field precisely when $f$ is irreducible. Applying Proposition \ref{stabis} finishes the proof. 
\end{proof}
Note that the intersection of $L\x$ (and therefore $\Stab_c$) with $\GL(V_1) \times \{1\}$, and with $\{1\} \times \GL(V_2)$ is just the identity. This implies that either projection restricted to $L\x$ (and therefore to $\Stab_c$) is an isomorphism onto its image.

\begin{Corollary}
For $c\in \Invrs(k)$, $\Stab_c$ is isomorphic to the stabilizer of any other $\alpha \in \Vij(k)$ with the same invariants.
\end{Corollary}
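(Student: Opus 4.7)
The plan is to use Corollary \ref{gorb} (regular semisimple elements with the same invariants lie in one $H(k^{\sep})$-orbit) together with the fact, essentially established by Proposition \ref{stabis}, that $\Stab_c$ is commutative (being a subgroup of the kernel of a norm map between diagonalizable $\mu_2$-type groups). Commutativity will make the conjugation isomorphism between stabilizers canonical, which is precisely what is needed to descend it from $k^{\sep}$ to $k$.

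First, given $\alpha \in W(k)$ with invariants $c$, Corollary \ref{gorb} produces some $g \in H(k^{\sep})$ with $g \alpha_1(c) g^{-1} = \alpha$. Conjugation by $g$ gives an isomorphism of $k^{\sep}$-group schemes
\[
\mathrm{Ad}(g) : \Stab_{\alpha_1(c)} \xrightarrow{\ \sim\ } \Stab_\alpha .
\]
It remains to check that this isomorphism is Galois-equivariant, so that it descends to an isomorphism of group schemes over $k$.

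For this, fix $\sigma \in \Gal(k^{\sep}/k)$. Since both $\alpha_1(c)$ and $\alpha$ are $k$-rational, $\sigma(g)$ also conjugates $\alpha_1(c)$ to $\alpha$, so $g^{-1}\sigma(g) \in \Stab_{\alpha_1(c)}(k^{\sep})$. For any $h \in \Stab_{\alpha_1(c)}(k^{\sep})$, we then have
\[
\sigma\bigl(g h g^{-1}\bigr) \;=\; \sigma(g)\sigma(h)\sigma(g)^{-1} \;=\; g\,\bigl(g^{-1}\sigma(g)\bigr)\sigma(h)\bigl(g^{-1}\sigma(g)\bigr)^{-1} g^{-1}.
\]
Because $\Stab_{\alpha_1(c)}$ is commutative (the kernel of $\Res_{L/k}(\mu_2) \to \mu_2$), the element $g^{-1}\sigma(g)$ commutes with $\sigma(h)$, and the right-hand side simplifies to $g \sigma(h) g^{-1}$. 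Thus $\mathrm{Ad}(g)\circ \sigma = \sigma \circ \mathrm{Ad}(g)$, so $\mathrm{Ad}(g)$ is Galois-equivariant and descends to an isomorphism $\Stab_{\alpha_1(c)} \simeq \Stab_\alpha$ defined over $k$.

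The only potentially subtle point is verifying that the stabilizer is truly commutative as a group scheme (not merely that its $k$-points form an abelian group); but this is immediate from Proposition \ref{stabis}, since $\Res_{L/k}(\mu_2)$ is commutative and hence so is its subgroup scheme cut out by the norm. The rest is the standard descent argument for torsors under a commutative group. Thus no serious obstacle arises, and the corollary follows.
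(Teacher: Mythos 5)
Your proof is correct and follows essentially the same route as the paper: produce $g \in H(k^{\sep})$ conjugating the distinguished orbit representative to $\alpha$ via Corollary \ref{gorb}, observe that $g^{-1}\sigma(g)$ lies in the (abelian) stabilizer because both elements are $k$-rational, and conclude that $\mathrm{Ad}(g)$ is Galois-equivariant and hence descends to $k$. The only difference is that you write out the conjugation computation explicitly where the paper phrases it as ``the two maps differ by conjugation by an element of $\Stab_c$,'' which is a matter of exposition, not substance.
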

\begin{proof}
Let $T$ be the operator corresponding to $\alpha$. By Proposition \ref{gorb}, there exists $g \in \Gij(k^{\sep})$ such that $gT$ is the operator associated to $\alpha_1(c)$, which for ease we denote by $S$. Conjugation by $g$ provides an isomorphism between the stabilizers of $S$ and $T$, apriori defined only over $k^{\sep}$. Clearly, conjugation by $\sigma(g)$ is the isomorphism between the stabilizers obtained by applying $\sigma$ to the previous isomorphism, where $\sigma \in \Gal(k^{\sep}/k)$.

We claim that the two maps are the same. Indeed, $S$ and $T$ are $k$-rational, so that $\sigma(g)T = gT$, thereby forcing $g^{-1}\sigma(g) \in \Stab_c$. Therefore, the two maps differ by conjugation by an element of $\Stab_c$. That $\Stab_c$ is abelian forces the two maps to be the same. Therefore, the isomorphism between the two stabilizers is defined over $k$, proving the result. 
\end{proof}

\subsection{Distinguished orbits}
\begin{Proposition}
Fix $c\in \Invrs(k)$. Then $\Gij(k)$ acts simply transitively on the set of pairs $(T,X)$ where $T \in \Vij(k)$ and $\pi(T) = c$, and $X\subset V_1$ is a maximal isotropic subspace, with the property that $T^2X \subset X^{\perp}$. 
\end{Proposition}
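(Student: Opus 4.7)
The plan is to separate freeness from transitivity, reduce both to the canonical model $\alpha_1(c)$ of \S 4.1 working over $k^{\sep}$, and then descend to $k$.

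For freeness, I will show that the stabilizer of any pair $(T,X)$ in $\Gij(k^{\sep})$ is trivial. By Corollary \ref{gorb} it suffices to treat $T = \alpha_1(c) = T_{\beta}$, with $V_1 = L$ and $T^2 = T_{\gamma}$. The condition $T^2 X \subset X^{\perp}$ then says that $X$ is simultaneously isotropic for the trace form on $L$ and for its $\gamma$-twist. After the identification $L \otimes k^{\sep} \cong (k^{\sep})^n$ at the roots $\alpha_i$ of $f$, Proposition \ref{stabis} identifies $\Stab_c(k^{\sep})$ with sign vectors $(\epsilon_i) \in \{\pm 1\}^n$ satisfying $\prod_i \epsilon_i = 1$, so the number $b$ of indices with $\epsilon_i = -1$ is even and $a = n - b$ is odd. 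A $\lambda$-stable $X$ splits as $X = X^+ \oplus X^-$ along the $\pm 1$ eigenspaces $V_1^{\pm}$, and each $X^{\pm}$ is a common isotropic for the restricted pencil on $V_1^{\pm}$. The classical Fano theory --- a smooth intersection of two quadrics in $\mathbb{P}^{b-1}$ contains linear subspaces of projective dimension at most $b/2 - 2$ --- gives $\dim X^+ \le (a-1)/2$ from isotropy of one form on $V_1^+$ alone and, for $b \ge 2$, $\dim X^- \le b/2 - 1$. Summing yields $\dim X \le m - 1$, contradicting $\dim X = m$, so $\lambda = 1$.

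For transitivity, Corollary \ref{gorb} handles the $T$ component; it remains to show $\Stab_c(k^{\sep})$ acts transitively on valid $X$'s for $T = \alpha_1(c)$. The set of valid $X$'s over $k^{\sep}$ is the Fano variety of maximal common isotropic subspaces of the pencil generated by the two forms, which classically is a torsor for the 2-torsion of the Jacobian of $y^2 = f(x)$ (cf.\ \S 5) and has $2^{n-1}$ elements over $k^{\sep}$, matching $|\Stab_c(k^{\sep})|$. Combined with freeness, the $\Stab_c(k^{\sep})$-action is simply transitive. Descent is then standard: fix a rational base pair, which exists via Kostant sections in characteristic zero and via direct construction from $\alpha_1(c)$ in general; given a rational pair $(T,X)$, the unique $g \in \Gij(k^{\sep})$ carrying the base pair to $(T,X)$ is $\Gal(k^{\sep}/k)$-invariant by uniqueness, hence $g \in \Gij(k)$.

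The main obstacle is the strict dimension bound $\dim X^- \le b/2 - 1$, which is \emph{one less} than the max isotropic dimension $b/2$ for a single form. This strict gap --- coming from the fact that the two forms on $V_1^-$ have distinct eigenvalue ratios $\alpha_i$ and so their pencil is non-degenerate --- is what forces the total dimension to fall short of $m$, and hence $\lambda = 1$.
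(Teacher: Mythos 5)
Your proof is correct, but it follows a genuinely different route from the paper's, which delegates both halves of the argument to results of \cite{BG} for a single orthogonal group. For freeness the paper observes that a stabilizing $(g_1,g_2)$ has $g_1$ commuting with $T^2|_{V_1}$ and preserving $X$, so $g_1=\Id_n$ by \cite{BG}, and then $g_2=\Id_n$ because the projection of $\Stab_c$ to the first factor is injective; for transitivity over $k^{\sep}$ it invokes \cite[Proposition 6]{BG} to produce $g_1$ in the centralizer $\Stab_{V_1}$ of $T^2|_{V_1}$ with $g_1X'=X$, and then lifts $g_1$ to an element $(g_1,g_2)\in\Stab_c$ by noting that the injective projection $\Stab_c\to\Stab_{V_1}$ must be a bijection since both groups have order $2^{n-1}$. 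You instead prove freeness from scratch: decomposing $X$ along the $\pm1$ eigenspaces of a stabilizing sign vector $\lambda$ and adding the isotropy bounds $(a-1)/2$ and $b/2-1$ for the restricted forms gives $\dim X\le m-1$ unless $b=0$, a self-contained argument that makes visible exactly why a nontrivial stabilizer is incompatible with a common isotropic of full dimension $m$. Your transitivity step replaces the paper's lifting argument by a cardinality count (a free action of a $2^{n-1}$-element group on a $2^{n-1}$-element set is simply transitive); the external input you need --- that the set of valid $X$ over $k^{\sep}$ has exactly $2^{n-1}$ elements --- is essentially the same classical pencil-of-quadrics fact that underlies \cite[Proposition 6]{BG}, so the two proofs are of comparable depth, with yours trading the lift of $g_1$ to $\Stab_c$ for a counting argument. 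The final descent to $k$ via triviality of stabilizers is the same in both.
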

\begin{proof}
We first show that the stabilizer of such pairs is just the identity. Let $g = (g_1,g_2) \in \SO(V_1) \times \SO(V_2)(k)$ be an element in the stabilizer. Then $g_1$, thought of as an element of $\SO(V_1)$ commutes with the self adjoint operator $T^2$ restricted to $V_1$ and also preserves the subspace $X$. By \cite{BG}, $g_1 = \Id_n$. As remarked above, this forces $g_2 = \Id_n$, as required. Because this stabilizer is trivial, an easy descent argument shows that it suffices to prove the statement over $k^{\sep}$. 

By Proposition \ref{gorb}, it suffices to prove that $(T,X')$ and $(T,X)$ are in the same orbit, where $X'$ is a subspace with the same properties as $X$. By \cite[Proposition 6]{BG}, there exists $g_1 \in \SO(V_1)$ which commutes with $T^2$, such that $g_1X' = X$. It suffices to demonstrate the existence of $g_2 \in \SO(V_2)$ such that $g = (g_1,g_2) \in \Stab_c$. By \cite{BG}, the centralizer $\Stab_{V_1}$ of $T^2$ in $\SO(V_1)$ is an abelian $2$-group of order $2^{n-1}$, which is the same as the order of $\Stab_c$. Therefore, the projection map (having trivial kernel) from $\Stab_c$ to $\Stab_{V_1}$ must be a bijection, whence we deduce the existence of the required $g_2$. 
\end{proof}

It is easy to see that $\alpha_1 \in \Vij(k)$ is 1-distinguished. Similarly, in the next subsection, we will explicitly construct a corresponding $\alpha_2$ which is 2-distinguished. Over a separably closed field, these two operators will lie in the same $\Gij$-orbit. 

\subsection{The remaining orbits}
We again fix $c \in \Invrs(k)$. Let $W_c$ denote the fiber of $\pi$ over $c$. Proposition \ref{gorb} can be rephrased as stating that $\Gij(k^{\sep})$ acts transitively on $W_c(k^{\sep})$. Once one particular $\Gij(k)$-orbit is fixed,  by \cite[Proposition 1]{AIT}, the set $W_c(k)/\Gij(k)$ is in bijection with the kernel of a map of pointed sets $\delta:\ H^1(k,\Stab_c) \rightarrow H^1(k,\Gij)$ (the notation used in \cite{AIT} is $\gamma$, not $\delta$). We use $\alpha_1(v)$ (the 1-distinguished orbit) as our fixed orbit, and explicitly describe the map $\delta$.

The Kummer exact sequence gives that $H^1(k,\Stab_c)$ = $(L\x/L\xt)_{N=1}$. To an element $\nu$ in $L\x_{N=1}$, we associate the orthogonal space $M$ with the bilinear form $\BB_{\nu}$, with $\langle \lambda , \mu \rangle_{\nu} = \Tr_{M/k}(f'(\gamma) \nu \lambda \mu)$. This orthogonal space corresponds to an element in $H^1(k,\SO(V))$. Clearly, the new form breaks up into a direct sum of forms on $L$ and $L\beta$, so our cocycle actually lies in $H^1(k,\Gij)$ (recall that $\Gij = G_1 \times G_2$). Let $\B_{\nu}$ denote the bilinear form on $L$ given by $(\lambda, \mu)_{\nu} = \Tr_{L/k}(f'(\gamma) \nu \lambda \mu)$. It is easy to see that $\BB_{\nu} = \B_{\nu} \oplus \B_{\text{-}\nu\gamma}$. The map $\delta$ maps the class $\nu$ to the element of $H^1(k,\Gij)$ just described (\cite[Lemma 3]{AIT}). The class of $\nu$ will be in the kernel precisely when both the spaces $L$ and $L\beta$ are split. Note that this is same as saying that the forms on $L$ given by $\nu$ and $\nu \gamma$ are both split. 

It is easy to see that the element $\nu = (-\gamma) \in H^1(k,\Stab_c)$ does lie in the kernel of $\delta$. Therefore, $(-\gamma)$ gives rise to $\alpha_2 \in W_c(k)$, and its class in $H^1(k,\Stab_c)$ gives a $k$-rational orbit in $W_c(k)/\Gij(k)$. In fact, the orbit of $\alpha_2$ is two-distinguished. Summarizing, we have the 
\begin{Proposition}
For each $c \in \Invrs(k)$, the $\Gij(k)$-orbit of $\alpha_1(v) \in \Vij(k)$ is 1-distinguished. The stabilizer of $\alpha_1$,  $\Stab_c$ is isomorphic to $\Res_{L/k}(\mu _2)_{N=1}$. All the other $k$-rational orbits orbits with the same invariants have the same stabilizer, lie in the $\Gij (k^{\sep})$-orbit of $\alpha_1$ and correspond bijectively to the non-identity classes in the kernel of $\delta :  H^1(k,\Stab_c) \rightarrow H^1(k,\Gij)$. The class of $\nu \in L\x_{N=1}$ in $H^1(k,\Stab_c)$ corresponds to the space $M$ along with the operator $T_{\beta}$ and the bilinear form $\BB_{\nu}$. The class of $(-\gamma)$ is the 2-distinguished orbit, corresponding to the $\Gij(k)$ orbit of $\alpha_2$.
\end{Proposition}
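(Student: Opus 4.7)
My plan is to assemble the proposition from results already in place, reducing the work to an explicit verification of the two distinguished claims in the model $M = L \oplus L\beta$. The stabilizer identification $\Stab_c \cong \Res_{L/k}(\mu_2)_{N=1}$ is Proposition~\ref{stabis}; the corollary immediately preceding this proposition shows that every $\alpha \in \Vij(k)$ with invariants $c$ has the same stabilizer; Corollary~\ref{gorb} gives a single $\Gij(k^{\sep})$-orbit on $W_c$; and \cite[Proposition~1]{AIT}, applied with $\alpha_1$ as base point, yields the bijection $W_c(k)/\Gij(k) \leftrightarrow \ker\delta$. The cocycle description of the twisted form as $\BB_\nu$ on $M$ (with operator $T_\beta$) is \cite[Lemma~3]{AIT}.

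For the 1-distinguished property of $\alpha_1$, I would use that in the model $M = L \oplus L\beta$ the form on $V_1 = L$ is (up to a harmless scalar) $\Tr_{L/k}(f'(\gamma)\lambda\mu)$ and $T^2 = T_\beta^2$ acts as multiplication by $\gamma$. Taking $X_1 = \langle \gamma^i/f'(\gamma) : 0 \le i \le m-1\rangle \subset V_1$, both the isotropy of $X_1$ and the containment $\gamma X_1 \subset X_1^{\perp}$ collapse to the Lagrange identity $\Tr_{L/k}(\gamma^s/f'(\gamma)) = 0$ for $0 \le s \le n-2$, which one obtains by reading the coefficient of $x^{-1}$ in the partial-fraction expansion of $x^s/f(x)$. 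Since $\dim X_1 = m$ and $\dim V_1 = n = 2m+1$, this exhibits $\alpha_1$ as 1-distinguished.

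For the 2-distinguished property of $\alpha_2$, the twisted form $\BB_{-\gamma}$ restricted to $V_2 = L\beta$, pulled back under $\lambda\beta \leftrightarrow \lambda$, is (up to scalar) $\Tr_{L/k}(f'(\gamma)\gamma^2\lambda\mu)$, while $T^2$ continues to act by multiplication by $\gamma$. The analogous maximal isotropic subspace is then $X_2 = \langle \gamma^{i-1}\beta/f'(\gamma) : 0 \le i \le m-1\rangle \subset V_2$, i.e.\ the shift of $X_1$ by $\beta/\gamma$; the same Lagrange identity (applied with indices shifted up by two) shows isotropy and $\gamma X_2 \subset X_2^{\perp}$, so $\alpha_2$ is 2-distinguished.

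The hard part in this plan is purely bookkeeping: correctly tracking the effect of the twist by $-\gamma$ on the form restricted to $V_2$, and then picking the right power-of-$\gamma$ shift in $X_2$ so that every orthogonality relation reduces to the same interpolation identity used for $\alpha_1$. Once these identifications are in place the proposition is a direct assembly of the ingredients above.
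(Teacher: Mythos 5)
Your proposal is correct and matches the paper's route: the paper presents this proposition as a summary of \S4.1--4.3, citing Proposition \ref{stabis}, Corollary \ref{gorb}, and \cite[Propositions 1 and Lemma 3]{AIT} exactly as you do, and merely asserts the 1- and 2-distinguished claims as ``easy to see.'' Your explicit isotropic subspaces $X_1=\langle \gamma^i/f'(\gamma)\rangle$ and $X_2=\beta\gamma^{-1}X_1$, reduced to the Euler--Lagrange identity $\Tr_{L/k}(\gamma^s/f'(\gamma))=0$ for $0\le s\le n-2$, correctly supply the omitted verification (and, as a bonus, show both components of $\BB_{-\gamma}$ are split, so $-\gamma\in\ker\delta$).
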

An immediate corollary is 
\begin{Corollary}\label{gammasquare}
Fix $c \in \Invrs(k)$. The two distinguished orbits lie in the same $\Gij(k)$-orbit if and only if $(-\gamma)$ is a perfect square in $L\x$.
\end{Corollary}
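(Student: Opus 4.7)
The plan is to read off the corollary directly from the parameterization provided by the preceding proposition. That proposition gives a bijection between $k$-rational $\Gij(k)$-orbits in $W_c(k)$ and classes in $\ker \delta \subset H^1(k,\Stab_c) = (L^\times/L^{\times 2})_{N=1}$, under which the class of the identity corresponds to the 1-distinguished orbit of $\alpha_1$, and the class of $(-\gamma)$ corresponds to the 2-distinguished orbit of $\alpha_2$. Therefore, the orbits of $\alpha_1$ and $\alpha_2$ coincide under $\Gij(k)$ if and only if the class of $(-\gamma)$ equals the identity class in $(L^\times/L^{\times 2})_{N=1}$.

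First, I would observe that by Proposition \ref{stabis} (and the identification supplied by the Kummer sequence), $H^1(k, \Stab_c)$ is canonically a subgroup of $L^\times/L^{\times 2}$, namely the norm-one classes. Consequently, a class in $H^1(k,\Stab_c)$ is trivial if and only if its image in $L^\times/L^{\times 2}$ is trivial; in particular, the class of $(-\gamma)$ is trivial in $H^1(k,\Stab_c)$ if and only if $-\gamma$ is a square in $L^\times$. (One can sanity-check that $N_{L/k}(-\gamma) = (-1)^n \cdot (-1)^n e^2 = e^2$ is a square in $k^\times$, confirming that $(-\gamma)$ indeed defines a class in the norm-one subgroup.)

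Combining these two observations completes the proof: the two distinguished orbits coincide over $k$ if and only if the classes in $\ker \delta$ associated to $\alpha_1$ and $\alpha_2$ agree, if and only if $(-\gamma)$ is a square in $L^\times$. There is essentially no obstacle here beyond unpacking the preceding proposition, since the hard technical work (existence of $\alpha_2$, the identification of $\Stab_c$, and the description of the map $\delta$) has already been carried out. The only minor subtlety worth noting explicitly is the passage between the norm-one subgroup and the full group $L^\times/L^{\times 2}$, which is immediate because the former is literally a subgroup of the latter.
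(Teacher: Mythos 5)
Your proof is correct and follows exactly the route the paper intends: the paper states this as an immediate consequence of the preceding proposition, which identifies the 1-distinguished orbit with the trivial class and the 2-distinguished orbit with the class of $(-\gamma)$ in $(L^{\times}/L^{\times 2})_{N=1}$. Your unpacking of the identification $H^1(k,\Stab_c)=(L^{\times}/L^{\times 2})_{N=1}$ and the norm check $N_{L/k}(-\gamma)=e^2$ are both accurate.
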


\section{Connection with hyperelliptic curves}
In this section, we associate hyperelliptic curves and some torsors to rational orbits of our representation. Recall that we have assumed that $n = 2m+1$ is odd. Given $c \in \Invrs(k)$, we associate the curves $C_{1,c}, C_{2,c}$ given by $y^2 = f(x)$ and $y^2 = xf(x)$. As usual, $f(x) = x^n + a_1 x^{n-1} + \hdots a_{n-1}x + e^2$, where $c = (a_1, \hdots a_{n-1},e)$. 

The curves have marked points (rational over $k$): $C_{1,c}$ has a rational Weirstrass point which we call $\infty_1$, which lies above the point at infinity in $\mathbb{P}^1$. The points $(0,\pm e)$ are also $k$-rational, and are conjugate for the hyperelliptic involution on $C_{1,c}$. We call these points $P_1,P'_1$. Similarly, the point of $C_{2,c}$ above $0$ is a rational Weirstrass point, which we call $P_2$. There is also a pair of $k$-rational points above the point at infinity, $\infty_2,\infty'_2$, which are conjugate for the hyperelliptic involution on $C_{2,c}$. Let $J_{i,c}$ be the Jacobians of $C_{i,c}$. Note that $J_{1,c}[2] \cong J_{2,c}[2] \cong \Res_{L/k}(\mu _2)_{N=1}$.

\subsection{Pencils of quadrics}
Suppose that $\alpha \in W^{\rs}_c(k)$, i.e. has invariants $c$. The 2-torsions of $J_{1,c}$ and $J_{2,c}$ are related to the stabilizer of $\alpha$ as follows:  

\begin{Proposition}
The stabilizer $\Stab_c$ is isomorphic (as group schemes over $k$) to $J_{i,c}[2]$.
\end{Proposition}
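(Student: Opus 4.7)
\emph{Proof plan.} By Proposition \ref{stabis}, $\Stab_c \cong \Res_{L/k}(\mu_2)_{N=1}$, so the task reduces to exhibiting Galois-equivariant isomorphisms $J_{i,c}[2] \cong \Res_{L/k}(\mu_2)_{N=1}$ for $i = 1, 2$; since all groups in sight are \'etale, it suffices to match $\overline{k}$-points as Galois modules.

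For $i = 1$, the curve $C_{1,c}\colon y^2 = f(x)$ has odd degree $n$ with a single rational Weierstrass point $\infty_1$ at infinity, so the standard description of 2-torsion applies: $J_{1,c}[2](\overline{k})$ is generated by the classes $[(\alpha,0) - \infty_1]$ for $\alpha$ a root of $f$, subject to the unique relation $\sum_\alpha [(\alpha,0) - \infty_1] = 0$ coming from $\mathrm{div}(y) = \sum_\alpha (\alpha,0) - n\infty_1$. The assignment
\[ \Phi_1\colon (\epsilon_\alpha)_\alpha \longmapsto \sum_{\epsilon_\alpha = -1}[(\alpha,0) - \infty_1] \]
is visibly Galois-equivariant, and its kernel forces the subset $\{\alpha : \epsilon_\alpha = -1\}$ to equal $\emptyset$ or the entire set of roots; the all-$(-1)$ tuple fails the condition $N = 1$ because $n$ is odd, so the kernel is trivial. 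A comparison of orders ($2^{n-1}$ on each side) then gives the isomorphism.

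For $i = 2$, the curve $C_{2,c}\colon y^2 = xf(x)$ has even degree $n+1$ with two non-Weierstrass points $\infty_2, \infty'_2$ at infinity, and its Weierstrass points are $P_2 = (0,0)$ together with $(\alpha,0)$ for each root $\alpha$ of $f$; the associated \'etale algebra is $L'' = k[x]/(xf(x)) = k \times L$. The classical ``even-sized subsets of Weierstrass points modulo complementation'' description of 2-torsion on even-degree hyperelliptic Jacobians (with the two relations traceable to $\mathrm{div}(y) = P_2 + \sum_\alpha (\alpha,0) - (m+1)(\infty_2 + \infty'_2)$) supplies a Galois-equivariant isomorphism $J_{2,c}[2] \cong \Res_{L''/k}(\mu_2)_{N=1}/\mu_2$. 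I then produce the comparison map
\[ \Phi_2\colon (\epsilon_0, (\epsilon_\alpha)_\alpha) \longmapsto (\epsilon_0 \epsilon_\alpha)_\alpha \]
from $\Res_{L''/k}(\mu_2)_{N=1}$ to $\Res_{L/k}(\mu_2)_{N=1}$; it is Galois-equivariant, the target norm $\prod_\alpha(\epsilon_0\epsilon_\alpha) = \epsilon_0^n\prod_\alpha\epsilon_\alpha = \epsilon_0^{n+1} = 1$ is automatic (using the source condition $\epsilon_0\prod_\alpha\epsilon_\alpha = 1$ and oddness of $n$), its kernel is exactly the diagonal $\mu_2$ (forcing $\epsilon_\alpha = \epsilon_0$, which satisfies $N = \epsilon_0^{n+1}=1$ automatically), and surjectivity follows by taking $\epsilon_0 = 1$. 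Descending to quotients and composing yields $J_{2,c}[2] \cong \Stab_c$.

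The main obstacle is the even-degree case. Justifying the ``even subsets modulo complementation'' description of $J_{2,c}[2]$ is classical but requires care, since one must identify the \'etale algebra of Weierstrass points with $k \times L$ and verify that Galois-equivariance propagates through the quotient by the diagonal $\mu_2$ coming from the relation on $\mathrm{div}(y)$. The subsequent identification via $\Phi_2$ is where the oddness of $n$ enters decisively, converting the $N=1$ data on the product algebra $L''$ back to $N=1$ data on $L$.
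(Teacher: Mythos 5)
Your proof is correct and is the standard argument; the paper itself offers no proof of this proposition (it simply asserts $J_{1,c}[2]\cong J_{2,c}[2]\cong \Res_{L/k}(\mu_2)_{N=1}$ as known and combines it with the computation of $\Stab_c$), so your write-up supplies exactly the details the paper takes for granted. In particular your maps $\Phi_1$ and $\Phi_2$, the use of the oddness of $n$ to kill the all-$(-1)$ tuple in the odd-degree case and to verify the norm condition in the even-degree case, and the order counts are all accurate.
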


Recall that $\Gij(k)$-orbit of $\alpha_1(c)$ is 1-distinguished, and that the map $\delta$ (based at the orbit of $\alpha_1$) described in the previous section gives a map from $W_c(k)/\Gij(k) \rightarrow H^1(k,\Stab_c)$. The inclusion $A[2] \hookrightarrow A$ for any group scheme over $k$ gives the natural map $H^1(k, A[2]) \rightarrow H^1(k,A)$. Therefore, for $i = 1,2$, we have maps $ W_c(k)/\Gij(k) \rightarrow H^1(k,J_{i,c})$, by identifying $\Stab_c$ with $J_i[2]$. We recall the theory developed in \cite{Jerrythesis}, (also see \cite{AIT}, \cite{Jerry}) which describes these maps. 

Recall that the vector spaces $V_1$ and $V_2$ are equipped with quadratic forms $Q_1$ and $Q_2$. Let $B_1$ and $B_2$ denote the associated bilinear forms. Let $\alpha \in W_c(k)$, and let $T = T_1 \oplus T_2$ be the associated self adjoint matrix with block diagonal zero. Define $B_{1,T^2}(v_1,w_1) = B_1(v_1,T^2w_1)$ for $v_1, w_1\in V_1$. Note that $B_{1,T^2}$ is $B_2$ pulled back from $V_2$ by $T_1$. Denote by $Q_1$ and $Q_{1,T^2}$ the corresponding quadratic forms on $V_1$. 

Define $P^1_{\alpha}$ to be the pencil of quadrics on the space $\mathbb{P}(V_1 \oplus k)$ spanned by $Q'_1$ and $Q'_{1,T^2}$, where $Q'_1((v_1, \lambda)) = Q_1(v_1)$, and $ Q'_{1,T^2}((v_1,\lambda)) = Q_{1,T^2}(v_1,v_1) + \lambda^2$. Define $F^1_{\alpha}$ to be the Fano variety of the base locus of $P^1_{\alpha}$. The theory developed in \cite{Jerrythesis} demonstrates $F^1_{\alpha}$ as being a torsor for $J_1$. In fact, if the orbit of $\alpha$ corresponds to $\nu \in H^1(k,\Stab_c)$, then it is proved in \cite[Corollary 2.23]{Jerrythesis} that $F^1_{\alpha}$ is the image of $\nu$ in $H^1(k,J_{1,c})$. 

The same construction with the order of $Q_1$ and $Q_{1,T^2}$ reversed gives a pencil of quadrics $P^2_{\alpha}$. The asociated Fano variety $F^2_{\alpha}$ gives a torsor of the Jacobian of the curve $y^2 = x^n + \frac{a_{n-1}}{e^2}x^{n-1} + \hdots + \frac{a_1}{e^2} + \frac{1}{e^2}$. But this curve is isomorphic to $C_2$ via the isomorphism $(x,y) \mapsto (1/x,ey/x^{\frac{n-1}{2}})$. Again, $F^2_{\alpha}$ is the image of $\nu$ in $H^1(k,J_2)$, where $\Stab_c$ is identified with $J_{2,c}[2]$. 

For $i=1,2$, $\delta(\alpha)$ will be in the image of $J_{i,c}/2J_{i,c}$ precisely when $F^i_{\alpha}$ has a $k$-rational point. Also, changing only the sign of the pfaffian but leaving the other invariants fixed doesn't change $P^1_{\alpha}$ or $P^2_{\alpha}$. This is because, changing the pfaffian is the same is replacing $T$ by $-T$, and this doesn't change $T^2$. Therefore the 2-cover that we get stay the same. We will ignore the sign of the pfaffian while associating pencils to rational orbits. 

\subsection{Soluble orbits}
Any element of $J_{i,c}(k)$ can be mapped to $H^1(J_{i,c}[2])$, and through the identification with $H^1(k,\Stab_c)$, to $H^1(k,\Gij)$. We have also identified the set of $\Gij(k)$-orbits with invariants $c$ with a subset of $H^1(k,\Stab_c)$, under which the 1-distinguished orbit corresponds to the trivial element of $H^1(k,\Stab_c)$.
\begin{Proposition}\label{twodist}
The class of the 2-distinguished orbit in $H^1(k,\Stab_c)$ is in the image of $J_{i,c}$ for both $i$. 
\end{Proposition}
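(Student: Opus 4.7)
The approach is to exhibit, for each $i\in\{1,2\}$, an explicit $k$-rational divisor class on $J_{i,c}$ whose image in $H^1(k,\Stab_c)=(L\x/L\xt)_{N=1}$ is $[-\gamma]$, the class corresponding to the 2-distinguished orbit $\alpha_2$ (identified in the preceding proposition). Once such a preimage is produced, it lies by definition in the image of $J_{i,c}(k)/2J_{i,c}(k)$, which is exactly what is to be shown.

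For $i=1$, I would use the $k$-rational non-Weierstrass point $P_1=(0,e)\in C_{1,c}(k)$ and consider the divisor class $[(P_1)-(\infty_1)]\in J_{1,c}(k)$. The classical 2-descent formula for an odd-degree hyperelliptic curve based at the Weierstrass point at infinity sends a non-Weierstrass affine point $P=(x_P,y_P)$ to $[x_P-\gamma]\in L\x/L\xt$; evaluating at $P_1$ yields exactly $[-\gamma]$. A brief check of the norm condition, $N_{L/k}(-\gamma)=(-1)^n\prod\alpha_i=-(-e^2)=e^2\in k\xt$ (using $n$ odd and $f(0)=e^2$), confirms that the output lies in the norm-1 subgroup.

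For $i=2$, I would use the class $[(\infty_2)-(P_2)]\in J_{2,c}(k)$ and compute its descent image through the function $\phi=x^{m+1}-y$ on $C_{2,c}$. A direct local analysis gives $\mathrm{div}(\phi)=(P_2)+\sum_{x_0\in Z}(x_0,x_0^{m+1})-m(\infty_2)-(m+1)(\infty'_2)$, where $Z$ denotes the zero set of $g(x):=f(x)-x^{2m+1}$. Combined with the hyperelliptic relation $(\infty_2)+(\infty'_2)\sim 2(P_2)$, this lets me rewrite $[(\infty_2)-(P_2)]$ modulo $2J_{2,c}(k)$ as $\sum_{x_0\in Z}[(x_0,x_0^{m+1})-(P_2)]$. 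Applying the multiplicative 2-descent formula term by term, and using Vieta on $g$ (constant term $e^2$, leading coefficient $a_1$) together with the identity $g(\gamma)=-\gamma^{2m+1}$ inside $L$ (a consequence of $f(\gamma)=0$), the product collapses modulo squares to $[-\gamma^{2m+1}]$, which equals $[-\gamma]$ because $\gamma^{2m}=(\gamma^m)^2\in L\xt$.

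The main obstacle is the $i=2$ bookkeeping: the 2-descent map on an even-degree hyperelliptic curve uses a modified evaluation at Weierstrass points and an auxiliary quotient by $k\x$, and one must also verify compatibility between the identifications $\Stab_c\cong J_{1,c}[2]\cong J_{2,c}[2]$ so that the two preimages are seen to produce the \emph{same} class $[-\gamma]\in H^1(k,\Stab_c)$. If this explicit route becomes unwieldy, a cleaner alternative is available: since $\alpha_2$ is 2-distinguished by the explicit construction in \S 4, the Fano variety $F^2_{\alpha_2}$ acquires a $k$-rational point from the maximal isotropic data, so its class in $H^1(k,J_{2,c})$ vanishes by the correspondence of \cite[Corollary 2.23]{Jerrythesis}, and $[-\gamma]$ lies in the image of $J_{2,c}(k)/2J_{2,c}(k)$ for that reason.
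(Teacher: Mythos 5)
Your proposal is correct and takes essentially the same route as the paper, whose proof simply invokes Stoll's explicit 2-descent formulas to identify $(-\gamma)$ as the image of $P_1-\infty_1$ in $J_{1,c}(k)$ and of $P_2-\infty_2$ in $J_{2,c}(k)$; you are carrying out exactly that computation in detail (the $i=1$ case cleanly, and the $i=2$ case via the function $x^{m+1}-y$). The one point of caution is the $i=2$ normalization — your product evaluates to $-\gamma^{2m+1}/a_1$, so the extraneous factor lies in $k^{\times}$ and must be absorbed by the quotient/identification you mention — which you correctly flag as the delicate bookkeeping step.
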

\begin{proof}
Stoll in \cite{Stoll} explicitly computes the 2-descent map from $J_{i,c}(k)$ to $H^1(k,J_{i,c}[2])$. With this in hand, it is easy to see that $(-\gamma)$, the class of the 2-distinguished orbit, is the image of $P_1 -\infty_1 \in J_{1,c}(k)$, and also the image of $P_2 - \infty_2 \in J_{2,c}(k)$. 
\end{proof}
Having seen that the marked points of both curves give rise to the 2-distinguished orbit, we now prove that the composite maps from $J_{i,c}(k)$ to $H^1(k,\Gij)$ are trivial, which tells us rational points in either Jacobian give us rational orbits. 
\begin{Proposition}\label{soluble}
Let $\nu$ be an element of $H^1(k,\Stab_c)$, which lies in one of the subgroups $J_{i,c}/2J_{i,c}$. Then $\nu$ lies in the kernel of $\delta$.
\end{Proposition}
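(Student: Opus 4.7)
The goal is to show that if $\nu \in H^1(k,\Stab_c)$ is the image of some divisor class in $J_{i,c}(k)/2J_{i,c}(k)$, then $\nu$ lies in the kernel of $\delta$, i.e.\ there exists a $k$-rational orbit whose class (based at $\alpha_1(c)$) is $\nu$. Equivalently, under the identification $H^1(k,\Stab_c) = (L^{\times}/L^{\times 2})_{N=1}$ from Proposition \ref{stabis}, we must exhibit a lift of $\nu$ to a self-adjoint operator $T_{\nu} \in W(k)$ with invariants $c$, which amounts to showing that both of the twisted bilinear forms $\B_{\nu}$ on $L$ and $\B_{-\nu\gamma}$ on $L\beta$ are split.

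My plan is to construct the rational orbit directly from a divisor representing $\nu$, using the interpretation of orbits via pencils of quadrics developed in \S 5.1. First, by linearity in $H^1$, it suffices to treat divisor classes of the form $D = \sum_j (Q_j) - r \infty_i$ (or $- r P_2$), where the $Q_j$ form a Galois-stable collection of affine points on $C_{i,c}$. Applying Stoll's explicit formula \cite{Stoll} (as invoked in the proof of Proposition \ref{twodist}), the image of such a $D$ in $(L^{\times}/L^{\times 2})_{N=1}$ is given by $\prod_j (x(Q_j) - \gamma)$ up to signs and scalars. The $y$-coordinates of the $Q_j$, which satisfy $y(Q_j)^2 = f(x(Q_j))$ (respectively $y(Q_j)^2 = x(Q_j) f(x(Q_j))$), supply precisely the rational isotropic data needed to split the associated twisted forms. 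Concretely, for any $\mu \in L^{\times}$ one can realize $\B_{\mu}$ as the quadratic form on $L$ whose maximal isotropic subspaces correspond to degree-$m$ divisors $E$ on $C_{1,c}$ with $2E - 2m \infty_1$ in the class of the divisor attached to $\mu$; a rational $D$ representing $\nu$ therefore yields a rational maximal isotropic subspace.

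The main obstacle is checking that the subspace produced by $D$ is actually isotropic of the right dimension for both of the orthogonal spaces $(L, \B_{\nu})$ and $(L\beta, \B_{-\nu\gamma})$ simultaneously, since $\nu$ comes from only one of the two Jacobians. Here the key input is that the Fano variety $F^i_{\alpha}$ of the pencil $P^i_{\alpha}$ attached to a lift $\alpha$ of $\nu$ is by \cite[Corollary 2.23]{Jerrythesis} the image of $\nu$ in $H^1(k,J_{i,c})$; triviality of this class (which is immediate since $\nu$ comes from $J_{i,c}(k)/2J_{i,c}(k)$, using the Kummer sequence for $[2]: J_{i,c} \to J_{i,c}$) gives a $k$-rational point on $F^i_{\alpha}$, and hence a rational maximal isotropic subspace for the pencil. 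Running the construction of \S 5.1 in reverse on this isotropic subspace produces a self-adjoint operator $T \in W(k)$ with the prescribed invariants $c$, whose orbit under $H(k)$ maps under $\delta$ to $\nu$. The symmetry between the two pencils $P^1_{\alpha}$ and $P^2_{\alpha}$ (they share the same base locus up to a change of basis) ensures that the single rational orbit obtained this way works uniformly for both $i = 1$ and $i = 2$, completing the proof.
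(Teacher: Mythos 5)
There is a genuine gap, and it sits exactly at the point your own "main obstacle" paragraph identifies. Your construction runs through "the Fano variety $F^i_{\alpha}$ of the pencil $P^i_{\alpha}$ attached to a lift $\alpha$ of $\nu$," but the pencils of \S 5.1 are only defined once one has a $k$-rational element $\alpha \in \Vij(k)$ in the class of $\nu$ — and the existence of such a $k$-rational lift is precisely the assertion that $\nu \in \ker\delta$, i.e.\ the statement being proved. To break the circle you would have to define the relevant torsor and quadratic spaces directly from the cocycle $\nu$ (via the twists $\B_{\nu}$ and $\B_{-\nu\gamma}$ of the form on $L$) and then prove that a rational point of the torsor forces these forms to be split; that is exactly the content of the result the paper cites as a black box (\cite[Proposition 6]{AIT}, or \cite[Theorem 4.6]{newJack}), and your sketch does not supply it. Note also that a point of the Fano variety gives a common isotropic subspace of the pencil of dimension one less than maximal for each individual quadric, so splitness of each form does not fall out "for free" even granting the rational point.

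The second, independent problem is your final "symmetry between the two pencils" step. The class $\delta(\nu)$ is trivial iff \emph{both} $\B_{\nu}$ and $\B_{-\nu\gamma}$ are split, and since $\nu$ comes from only one Jacobian, the splitness of the second form needs an argument. The correct mechanism, which your proposal never invokes, is multiplicative: the image of $J_{1,c}(k)/2J_{1,c}(k)$ in $(L\x/L\xt)_{N=1}$ is a subgroup, it contains $\nu$ by hypothesis, and it contains $-\gamma$ by Proposition \ref{twodist}; hence it contains the product $-\nu\gamma$, and the cited splitness result applies to $-\nu\gamma$ as well. Your symmetry claim, taken at face value, would show that every orbit arising from $J_{1,c}(k)$ is also $2$-soluble — which is false in general, and indeed its failure is the engine behind Theorem \ref{12selm}. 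The proposition only asserts existence of the orbit (both forms split), not solubility for both $i$, and conflating the two is what lets the gap hide.
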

\begin{proof}
The element $\delta(\nu) \in H^1(G_1 \times G_2)$ corresponds to the quadratic spaces isomorphic to $L$ with forms $\B_{\nu}$, and $\B_{-\nu \gamma}$. The element $\delta(\nu)$ is trivial precisely when $\Bij_{\nu}$ and $\Bij_{-\nu\gamma}$ are both split. 

By \cite[Proposition 6]{AIT} (see \cite[Theorem 4.6]{newJack} for proof which doesn't use pencils of quadrics) $\B_{\lambda}$ is split for any $\lambda \in J_{1,c}(k)/2J_{1,c}(k) \subset H^1(k,\Stab_c)$. If $\nu \in J_{1,c}(k)/2J_{1,c}(k)$, we apply Proposition \ref{twodist} to conclude that the same holds for $-\nu\gamma$. Therefore, the required spaces are split if $\nu$ lies in the subgroup $J_{1,c}(k)/2J_{1,c}(k)$. 

Exactly the same argument works if $\nu$ is in the image of $J_{2,c} /2J_{2,c}$ - notice that $C_{2,c}$ is isomorphic to the curve given by the Weirstrass equation $y^2 = x^n + \frac{a_{n-1}}{e^2}x^n-1 + \hdots + \frac{a_1}{e^2} + \frac{1}{e^2}$, and we apply the same result of \cite{AIT} (or \cite{newJack}) and Proposition \ref{twodist} to finish the proof of the result.  
\end{proof}

\begin{Definition}
Suppose that an orbit under $\Gij(k)$ corresponds to an $\nu \in H^1(\Stab_c)$ in the image of $J_{i,c}(k)/2J_{i,c}(k)$, for $i = 1,2$. We then say that the orbit is $i$-soluble. If an orbit lies in the image of both $J_{1,c}(k)$ and $J_{2,c}(k)$, we say that the orbit is $(1,2)$-soluble. 
\end{Definition}
Note that an orbit is (1,2)-soluble if and only if it is both 1-soluble and 2-soluble. Further, there is a geometric description of when an orbit is $i$-soluble, or (1,2)-soluble. Indeed, as mentioned above, the orbit of $\alpha$ is $i$-soluble if the corresponding Fano variety $F^i_{\alpha}$ has a $k-$rational point. The distinguished orbits are $i$-soluble for both $i = 1$ and $2$. 

We will not work with $2$-soluble orbits, and have defined what they are only for the sake of completion. 

\section{Orbits over arithmetic bases}
Suppose now that $k$ is a number field, and that $c \in \Invrs(k)$. Define $\Sel_{(1,2)}(c)$ to be the intersction of the $\Sel_2(J_{i,c})$ (for $i=1,2$) in $H^1(k,\Stab_c)$. 

We show that all elements in $\Sel_2(J_{i,c}) \subset H^1(k, J_i[2])$ give rise to orbits. 
\begin{Proposition}
Let $\nu \in \Sel_2(J_{i,c})$ for $i=1$ or $2$. Then $\delta(\nu)$ is the trivial element in $H^1(k,\Gij)$. 
\end{Proposition}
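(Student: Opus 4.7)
The plan is to reduce the global statement to a local one at every place of $k$ and then apply the Hasse principle. Specifically, by the definition of the 2-Selmer group, $\nu \in \Sel_2(J_{i,c})$ means that for every place $v$ of $k$, the restriction $\nu_v \in H^1(k_v, J_{i,c}[2]) = H^1(k_v, \Stab_c)$ lies in the image of the local Kummer map $J_{i,c}(k_v)/2J_{i,c}(k_v) \hookrightarrow H^1(k_v, \Stab_c)$. So the problem naturally splits into a local step (handled by earlier work) and a global step (handled by a Hasse principle for quadratic forms).

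For the local step, I would apply Proposition \ref{soluble} over each completion $k_v$, noting that the proof of Proposition \ref{soluble} only uses formal properties of the field (the identification of $\delta(\nu)$ with the pair of forms $\B_\nu$ and $\B_{-\nu\gamma}$ on $L$, and the fact proved in \cite{AIT}, \cite{newJack} that these forms become split on elements in the image of $J_{i,c}/2J_{i,c}$, together with Proposition \ref{twodist}). This gives that $\delta(\nu_v) = 0$ in $H^1(k_v, \Gij) = H^1(k_v, G_1) \times H^1(k_v, G_2)$ for every place $v$.

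For the global step, I would use that $H^1(k, G_i) = H^1(k, \SO(V_i))$ parameterizes non-degenerate $n$-dimensional quadratic forms over $k$ of the same discriminant as $Q_i$, modulo isometry. Unwinding the construction of $\delta$ in \S 4, the class $\delta(\nu)$ is represented by the pair of bilinear forms $\B_\nu$ and $\B_{-\nu\gamma}$ on $L$, regarded as $n$-dimensional $k$-vector spaces. That $\delta(\nu_v) = 0$ for all $v$ means these two forms are split (isometric to the hyperbolic form $Q_i$) over every completion $k_v$. By the Hasse-Minkowski theorem for quadratic forms over number fields, two quadratic forms over $k$ that are isometric at every place are already isometric over $k$; equivalently, the localization map $H^1(k, \SO(V_i)) \to \prod_v H^1(k_v, \SO(V_i))$ has trivial kernel. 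Applying this to each factor gives $\delta(\nu) = 0$ in $H^1(k, \Gij)$, as required.

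The main conceptual step is recognizing that the map $\delta$ factors through $H^1(k, \SO(V))$ via a class of quadratic forms, so that triviality is detectable by Hasse-Minkowski; once that is set up the argument is essentially formal, with no serious obstacle. One minor technical point to verify is that the discriminant of $\B_\nu$ (resp.\ $\B_{-\nu\gamma}$) matches that of $Q_1$ (resp.\ $Q_2$), so that the cocycle genuinely lies in $H^1(k, G_i)$ rather than only in $H^1(k, \rm{O}(V_i))$; this was already established in \S 4 when the map $\delta$ was constructed.
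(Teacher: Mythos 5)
Your proposal is correct and follows the same route as the paper: the paper likewise applies Proposition \ref{soluble} at each completion to see that $\B_{\nu}$ and $\B_{-\nu\gamma}$ are locally split, and then invokes the Hasse--Minkowski theorem to conclude they are split over the global field, so that $\delta(\nu)$ is trivial.
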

\begin{proof}
The exact same proof as in \cite{AIT} works. We prove the result for $k = \Q$, for the same proof applies in general. We need to show that the bilinear forms $\B_{\nu}$ and $\B_{\text{-}\nu\gamma}$ are split. Because $\nu \in \Sel_2(J_{i,c})$, Proposition \ref{soluble} tells us that $B_{\nu} \otimes \Q_p$ and $B_{\nu} \otimes \R$ are split (for all $p$). The same is true for $\B_{\text{-}\nu\gamma}$. Therefore, by the Hasse-Minkowski Theorem, the two forms must be split over $\Q$, as required. 
\end{proof}

We say that the $\Gij(k)$-orbit of $\alpha \in \Vij(k)$ is 1-soluble (or locally (1,2)-soluble) if $\alpha \in \Vij(k_{\nu})$ is soluble for every place $\nu$ of $k$. We henceforth work predominantly over the bases $\Z$ and $\Q$, and their completions. The main goal in this section is to prove that rational $\Gij(\Q)$-orbits on $\Vij(\Q)$ which are locally 1-soluble, and whose invariants are integral have representatives in $\Vij(\Z)$. 

To that end, let $D_1$ and $D_2$ be self-dual $\mathbb{Z}$-lattices inside $V_1$ and $V_2$ respectively. When we work with the rings $\Z_p$, we will (for sake of brevity) use the same notation $D_i$ to denote the completions of the lattices inside $V_i \otimes \Q_p$. Further, we will use the notation $\Vij(\Z)$ \textit{(resp. $V_1(\Z), V_2(\Z))$}for $D_1 \otimes D_2$ \textit{(resp. $D_1, D_2$)}. The same holds for $\Z_p$. 

Recall that there exist bases of $V_1(\Z)$ and $V_2(\Z)$, with respect to which the Gram matricies are $\pm B$, with $B$ as in Equation \eqref{stdform}.

The group $\Gij$ is defined over $\Z$, and is a reductive group scheme over $\Spec \Z[1/2]$. Given $\alpha \in \Vij(R)$, define $\Stab_{\alpha}(R) = \{g \in \Gij(R) \vert g\cdot\alpha = \alpha \}$, where $R$ stands for $\Z$ or $\Z_p$. Clearly, $\Stab_{\alpha}(R) \subset \Stab_{\alpha}(R \otimes_{\Z} \Q)$.

\begin{Definition}
Let $\pi: \Vij \rightarrow \Inv$ be the map described in \S 2. 
\begin{enumerate}
\item Define $\Inv(\Z) \subset \Inv(\Q)$ to be $\pi(\Vij(\Z))$. 
\item Define $\Invrs(\Z) \subset \Inv(\Z)$ to be $\pi(\Vij(\Z)\cap \Vij^{\rs}(\Q))$. 
\item Define $\Inv(\Z_p) \subset \Inv(\Q_p)$ to be $\pi(\Vij(\Z_p))$. 
\item Define $\Invrs(\Z_p) \subset \Inv(\Z_p)$ to be $\pi(\Vij(\Z_p)\cap \Vij^{\rs}(\Q_p))$.
\end{enumerate}
\end{Definition}
Notice that there is a reduction map from $\Inv(\Z)$ to $\Inv(\F_p)$ for $p>2$. An element of $\Invrs(\Z)$ maps to an element of $\Invrs(\F_p)$ exactly when $p$ doesn't divide the discriminant of $g(x)$, i.e. $p$ doesn't divide $e$ and $p$ doesn't divide the discriminant of $f(x)$. 

We have already seen that selmer group elements always give rise to $\Gij(\Q)$-orbits -- the locally soluble ones. The main theorem of this section is the following: 
\begin{Theorem}\label{intorb}
Suppose that $c = (a_1, \hdots a_{n-1},e)\in \Invrs(\Z)$, such that $2^{4i} \vert a_i$, and $2^{2n} \vert e$. Then every $\Gij(\Q)$-orbit which has invariants $c$ and is locally 1-soluble, has an integral representative. 
\end{Theorem}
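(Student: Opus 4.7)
I plan to prove Theorem \ref{intorb} via a local-to-global argument in the style of \cite{BG} and \cite{SW}. First, I would reduce to a purely local statement: for every prime $p$, any $\Gij(\Q_p)$-orbit on $\Vij^{\rs}(\Q_p)$ with invariants in $\Invrs(\Z_p)$ that is 1-soluble over $\Q_p$ admits a representative in $\Vij(\Z_p)$. The passage from local to global representatives is controlled by the cohomology of the stabilizer $\Stab_c \cong \Res_{L/\Q}(\mu_2)_{N=1}$: the obstruction lies in the kernel of a map from $H^1(\Z[1/S],\Stab_c)$ to a product of local cohomologies for a finite set $S$ of bad primes, and can be killed by adjusting local representatives by elements of $\Stab_c(\Q_p)$.

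For odd primes $p$ with $p \nmid \Disc(g)$, smoothness suffices. The reduction $\bar{c}$ lies in $\Invrs(\F_p)$, the map $\pi$ is smooth at regular semisimple points, and $\Gij$ is a smooth reductive group scheme over $\Z_p$. Hensel's lemma applied to the reduction of the explicit integral representative $\alpha_1(c)$ constructed in Section 4.1 then lifts to an integral representative of every $\Gij(\Q_p)$-orbit with invariants $c$.

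For odd primes $p$ with $p \mid \Disc(g)$, the 1-solubility hypothesis is what drives the argument. By Section 5.1 and \cite{Jerrythesis}, a $\Q_p$-point of the Fano variety $F^1_\alpha$ corresponds to a maximal isotropic subspace $Y \subset V_1 \otimes \Q_p$ with $T^2 Y \subset Y^\perp$. Using integrality of $c$ to bound the valuations of $T^2$-eigenvalues, and modifying $Y$ by the centralizer of $T^2$ in $\SO(V_1)$, I would promote $Y$ to a direct summand of the self-dual lattice $V_1(\Z_p)$ on which the same incidence property continues to hold. A companion construction in $V_2$ together with the relation between $T_1$ and $T_2$ then yields a $\Gij(\Z_p)$-conjugate of $\alpha$ lying in $\Vij(\Z_p)$. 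At $p=2$, where neither Hensel nor smoothness applies directly, the divisibility hypothesis $2^{4i}\mid a_i$, $2^{2n}\mid e$ is invoked. It is calibrated so that the twisted representative $T_\beta$ on $M \otimes \Q_2$ preserves an explicit self-dual $\Z_2$-lattice compatible with the decomposition $M = L \oplus L\beta$, on which the form $\BB_\nu$ is unimodular; an orthogonal change of basis then carries $T_\beta$ into $\Vij(\Z_2)$.

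The main obstacle is the odd bad-reduction case: translating local 1-solubility into an explicit $T^2$-stable self-dual $\Z_p$-lattice requires a careful inductive construction of the integral flag, since the naive Kostant-section candidate fails to be integral precisely at these primes. The $p=2$ analysis is technical but essentially a discriminant computation, with the divisibility chosen precisely to absorb the denominator $1/f'(\gamma)$ appearing in the trace pairing defining $\BB_\nu$.
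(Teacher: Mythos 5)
Your high-level skeleton (reduce to a local statement at each prime, then patch using class number one of $\SO_n\times\SO_n$ as in \cite{BS2selm}) matches the paper, and your treatment of odd primes of good reduction is essentially fine. But the local argument at the primes that actually matter has a genuine gap, starting from a mistranslation of the solubility hypothesis. A $\Q_p$-point of the Fano variety $F^1_\alpha$ is an $m$-dimensional subspace of $V_1\oplus\Q_p$ isotropic for \emph{both} quadrics $Q'_1$ and $Q'_{1,T^2}$ of the pencil; it is \emph{not} a maximal isotropic $Y\subset V_1$ with $T^2Y\subset Y^{\perp}$. The latter is precisely the definition of a $1$-distinguished orbit, so if your identification were correct every locally $1$-soluble orbit would be distinguished, which is false (soluble orbits correspond to the full image of $J_{1,c}(\Q_p)/2J_{1,c}(\Q_p)$ in $H^1(\Q_p,\Stab_c)$, not just the trivial class). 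Consequently the proposed ``promote $Y$ to a direct summand of the self-dual lattice'' step has no correct starting object, and the companion construction in $V_2$ is not supplied at all.

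What the paper actually does at a bad odd prime is a two-step ideal-theoretic argument that your proposal never isolates: integral orbits are classified (Proposition \ref{classint}) by triples $(I_1,I_2,\nu)$ of fractional ideals of $\Z_p[x]/(f(x))$ with $I_1\subset I_2\subset\gamma^{-1}I_1$ and self-duality for $\Bij_\nu$ and $\Bij_{\nu\gamma}$. The existence of $I_1$ for a $1$-soluble class $\nu$ is imported wholesale from \cite[Proposition 19]{BG}; the new content is the construction of $I_2$ as a self-dual lattice sandwiched between $I_1$ and $\gamma^{-1}I_1$, which is done by Cassels' diagonalization of $\Bij_{\nu\gamma}$ on $I_1$, a parity/discriminant argument showing the ``odd-valuation part'' is even-dimensional and split, and an explicit rescaling (with Lemma \ref{two} handling $p=2$). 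Relatedly, your explanation of the hypothesis $2^{4i}\mid a_i$, $2^{2n}\mid e$ is not the right one: it is not there to clear denominators of $1/f'(\gamma)$ in the trace form, but is inherited verbatim from \cite{BG}, where it is needed to guarantee the existence of $I_1$ over $\Z_2$; the new $I_2$ step needs no divisibility at all. Finally, the global patching is controlled by the class number of $H$ acting on the orbit, not by a kernel inside $H^1(\Z[1/S],\Stab_c)$; the stabilizer cohomology does not enter that step.
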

The local versions of Theorem \ref{intorb} are:
\begin{Theorem}\label{psolintorb}
Let $c = (a_1,\hdots a_{n-1},e) \in \Invrs(\Z_p)$, where $p \neq 2$. Then every $\Gij(\Q_p)$-orbit which has invariants $c$ and is $1$-soluble, has an integral representative. 
\end{Theorem}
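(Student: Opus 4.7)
The plan is to combine the cohomological parametrization of orbits with an explicit construction of an integral lattice. By Sections 4--5, the $\Gij(\Q_p)$-orbits with invariants $c$ are in bijection with $\ker(\delta) \subset H^1(\Q_p, \Stab_c) \simeq (L_p\x / L_p\xt)_{N=1}$, where $L_p = \Q_p[x]/f(x)$, and a 1-soluble orbit corresponds to a class $\nu$ lying in the image of the 2-descent map $J_{1,c}(\Q_p)/2J_{1,c}(\Q_p) \to H^1(\Q_p, J_{1,c}[2])$. The orbit itself is realized on $M_p = L_p \oplus L_p\beta$ (with $\beta^2 = \gamma$): the operator $T_\beta$ of multiplication by $\beta$, together with the form $\BB_\nu(\lambda,\mu) = \Tr_{M_p/\Q_p}(f'(\gamma)\nu\lambda\mu)$, which decomposes as $\Bij_\nu \oplus \Bij_{-\nu\gamma}$. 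The goal is to produce a $T_\beta$-stable, $\BB_\nu$-self-dual $\Z_p$-lattice $N = N_1 \oplus N_2\beta$ whose two summands are isometric to the standard integral orthogonal spaces $(D_1,Q_1)$ and $(D_2,Q_2)$.

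First, I would invoke Stoll's explicit formulas for 2-descent on $C_{1,c}$ \cite{Stoll} to represent $\nu$ as $\nu = \prod_i(x_i - \gamma)$, where $\{P_i = (x_i, y_i)\}$ is the support of a Galois-stable degree-$0$ divisor $D$ on $C_{1,c}$ disjoint from $\infty_1$. Using the integrality of $c$ together with $p \neq 2$ (so that $J_{1,c}$ has a sufficiently well-behaved integral model), one arranges $D$ to have $p$-integral $x$-coordinates, after adjusting within its linear equivalence class by a principal divisor if necessary. I would then construct $N_1$ as the fractional $\Z_p[\gamma]$-ideal whose square equals $(f'(\gamma))^{-1}\nu^{-1}\cdot \Z_p[\gamma]$, interpreted via the prime factorization in $\Z_p[\gamma]$, and define $N_2$ analogously with $-\nu\gamma$ in place of $\nu$. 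The identity driving this choice is that the different of $\Z_p[\gamma]/\Z_p$ equals $(f'(\gamma))$; consequently the trace pairing $\Tr_{L_p/\Q_p}(f'(\gamma)\cdot(\cdot)\cdot(\cdot))$ already makes $\Z_p[\gamma]$ self-dual, and the twist by $\nu$ is absorbed precisely by the above ideal shift.

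The remaining verifications are (i) $T_\beta$-stability of $N$, which is immediate from $\beta \cdot L_p\beta = \gamma L_p \subset L_p$ and the compatibility between the two ideals defining $N_1$ and $N_2$; (ii) integrality and self-duality of $\Bij_\nu|_{N_1}$ and $\Bij_{-\nu\gamma}|_{N_2}$, with discriminants $1$ and $(-1)^n$ respectively; and (iii) an isometry of these integral quadratic spaces with $(D_1,Q_1)$ and $(D_2,Q_2)$, which for $p \neq 2$ follows from Hensel's lemma once ranks, discriminants, and split types agree. The main obstacle is (ii) at primes $p \mid \Disc(f)$, where $L_p$ is not étale over $\Z_p$ and $\Z_p[\gamma]$ is strictly smaller than the maximal order: the fractional-ideal computations must be carried out carefully, and Galois-invariance of $D$ is essential for $N_1$ to descend from $L_p \otimes_{\Q_p} \Q_p^{\mathrm{unr}}$ to a genuine $\Z_p$-lattice. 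The hypothesis $p \neq 2$ is used throughout, both via étaleness of $\mu_2$ over $\Z_p$ (making $\Stab_c$ a finite flat group scheme whose cohomology is well-behaved) and via the absence of factors of $2$ that would otherwise obstruct the self-duality of the twisted trace form. This is the local analogue of the arguments appearing in \cite[\S 8]{BG} and \cite[\S 3]{SW}, adapted to our pair-of-marked-points setting.
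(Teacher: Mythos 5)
There is a genuine gap, and it sits exactly where the real difficulty of this theorem lies. Your construction produces the two lattices $N_1$ and $N_2$ \emph{independently}, as ideal square roots of $(f'(\gamma)\nu)^{-1}$ and $(f'(\gamma)(-\nu\gamma))^{-1}$, and then asserts that $T_\beta$-stability of $N_1\oplus N_2\beta$ is ``immediate from \dots the compatibility between the two ideals.'' It is not. Stability under $T_\beta$ is equivalent to the pair of inclusions $N_1\subset N_2\subset\gamma^{-1}N_1$ (so that $\beta N_1\subset N_2\beta$ and $\beta\cdot N_2\beta=\gamma N_2\subset N_1$), and nothing in your definition forces these: the ideal $(\gamma)$ has norm $\pm e^2$ but need not itself be a square of an ideal, so the two ``square roots'' you write down have no a priori containment relation. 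When $v_p(e)>0$ the lattice $N_2$ must sit \emph{strictly} between $N_1$ and $\gamma^{-1}N_1$, at a precise index, and arranging this is the main new content beyond the odd-degree case of Bhargava--Gross. The paper's proof handles it by taking $I_1$ from \cite[Proposition 19]{BG} (this is also where your step (ii) really lives: the existence of an ideal square root of $(f'(\gamma)\nu)^{-1}$ in the possibly non-maximal order $\Z_p[x]/(f)$ requires solubility and is not a formal consequence of the different being $(f'(\gamma))$), and then \emph{constructing} $I_2$ as a lattice self-dual for $\Bij_{\nu\gamma}$ sandwiched between $I_1$ and its $\Bij_{\nu\gamma}$-dual $\gamma^{-1}I_1$. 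That construction uses Cassels' diagonalization over $\Z_p$, a parity argument on the discriminant to see that the ``non-unimodular part'' has even rank, and a Hasse-invariant computation to see that it is split, after which one rescales half of its basis by $p^{-1}$. Any lattice between $I_1$ and $\gamma^{-1}I_1$ is automatically $\gamma$-stable, hence a fractional ideal, so this yields the required $I_2$.

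A secondary issue: your appeal to ``prime factorization in $\Z_p[\gamma]$'' is not available at primes dividing $\Disc(f)$, where the order is non-maximal and ideals do not factor uniquely; you flag this as a point needing care but do not resolve it, whereas it is precisely the content of the cited result of \cite{BG}. The divisor-theoretic representation of $\nu$ via Stoll's descent map is consistent with how \cite{BG} proceeds, so the first half of your outline is salvageable, but the theorem does not follow without an actual argument producing the second, compatibly placed self-dual lattice.
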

\begin{Proposition}\label{solintorb2}
Let $c = (a_1, \hdots a_{n-1}, e) \in \Invrs(\Z_2)$, such that $2^{4i} \vert a_i$, and $2^{2n} \vert e$. Then, every soluble $\Q_2$- orbit with invariants $c$ has an integral representative. 
\end{Proposition}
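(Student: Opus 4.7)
My strategy is to mimic the proof of Theorem \ref{psolintorb} while exploiting the 2-adic divisibility of $c$ to bypass the failure of $H$ to be smooth over $\Z_2$. Let $\alpha \in \Vij(\Q_2)$ represent the given soluble $\Q_2$-orbit, and set $L_2 = \Q_2[x]/f(x)$ and $M_2 = \Q_2[x]/f(x^2)$, with $\gamma, \beta$ the images of $x$ in $L_2$ and $M_2$ respectively (so $\beta^2 = \gamma$). By the analysis of Section 4, the orbit corresponds to a class $\nu \in L_2\x/L_2\xt$ of norm one, and $\alpha$ is $\Gij(\Q_2)$-equivalent to multiplication by $\beta$ on $M_2$ equipped with the bilinear form $\BB_\nu$, which decomposes as $\B_\nu \oplus \B_{-\nu\gamma}$ relative to $M_2 = L_2 \oplus L_2\beta$. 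Since the orbit is 1-soluble, Propositions \ref{soluble} and \ref{twodist} force both $\B_\nu$ and $\B_{-\nu\gamma}$ to be split over $\Q_2$, of the discriminants $1$ and $(-1)^n = -1$ required by $(V_1,Q_1)$ and $(V_2,Q_2)$.

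Next, the hypothesis $v_2(a_i) \geq 4i$ and $v_2(e) \geq 2n$ forces the Newton polygon of $f(x)$ to lie on or above the segment from $(0, 4n)$ to $(n, 0)$, so every root of $f$ in $\overline{\Q_2}$ has $2$-adic valuation $\geq 4$. Hence $\gamma \in 4\cdot \mathcal{O}_{L_2}$ and $\beta \in 2\cdot \mathcal{O}_{M_2}$, where $\mathcal{O}_{L_2}, \mathcal{O}_{M_2}$ are the maximal orders. This valuation-$2$ slack on $\beta$ is what the hypothesis buys us.

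I would then build an integral basis as follows. Splitness of $\B_\nu$ provides a $\Z_2$-basis $u_1,\ldots,u_n$ of a self-dual $\Z_2$-lattice in $L_2$ whose Gram matrix is the standard split matrix $B$ of \eqref{stdform}. Setting $v_i = u_i\beta/2$, the previous step makes each $v_i$ integral in $M_2$. A direct computation using $\beta^2 = \gamma$ shows that the Gram matrix of $\{v_i\}$ under $\B_{-\nu\gamma}$ is a $\Z_2$-unit multiple of $B$, which by the splitness of $\B_{-\nu\gamma}$ may be further adjusted, via an isometry acting on the second summand alone, to $-B$ exactly. In the combined basis of $M_2 = L_2 \oplus L_2\beta$, multiplication by $\beta$ sends $u_i \mapsto 2v_i$ and $v_i \mapsto \gamma u_i/2$; both have $\Z_2$-integral expansions, so the matrix of $\beta$ has block-anti-diagonal shape and lies in $\Vij(\Z_2)$. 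Finally, by the remark at the end of Section 2 equating $\Gij$-orbits with $G^\theta$-orbits over a field, the base change from the standard basis of $V_1(\Z_2) \oplus V_2(\Z_2)$ to $\{u_i\} \cup \{v_i\}$, orthogonal by construction, automatically represents the original $\Gij(\Q_2)$-orbit.

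The main obstacle is the normalization bookkeeping in the integral-basis construction: ensuring that the two Gram matrices come out exactly $\pm B$ while the matrix of multiplication by $\beta$ remains $\Z_2$-integral. The hypotheses $2^{4i}\mid a_i$ and $2^{2n}\mid e$ are calibrated precisely so that the valuation-$2$ slack in $\beta$ absorbs the power of $2$ introduced when rescaling the basis of the second summand to fix the discriminant of $\B_{-\nu\gamma}$, producing simultaneous integrality. This is exactly the compensation needed for the non-smoothness of $\Gij$ at $2$ that defeats a naive extension of the reductivity argument used in Theorem \ref{psolintorb}.
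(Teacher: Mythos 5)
There is a genuine gap, in two places. First, you treat the existence of the first self-dual lattice as an immediate consequence of splitness of $\B_\nu$: ``splitness of $\B_\nu$ provides a $\Z_2$-basis $u_1,\ldots,u_n$ of a self-dual lattice with Gram matrix $B$.'' But for multiplication by $\beta$ to be an \emph{integral} operator, that lattice must in addition be a fractional ideal of $R=\Z_2[x]/(f(x))$, i.e.\ stable under multiplication by $\gamma$ (this is the content of Proposition \ref{classint}); an arbitrary self-dual lattice in the split space $(L_2,\B_\nu)$ has no reason to be $\gamma$-stable. Producing a self-dual $R$-ideal $I_1$ for $\B_\nu$ is precisely the hard 2-adic input, and it is exactly here --- not in any later rescaling --- that the hypotheses $2^{4i}\mid a_i$, $2^{2n}\mid e$ are used: they are the conditions under which Bhargava--Gross prove the existence of $I_1$ over $\Z_2$, and the paper simply imports that result. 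Your proposed mechanism for the hypotheses (``valuation-$2$ slack in $\beta$'') is not what they are for.

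Second, and fatally, the rescaled basis $v_i=u_i\beta/2$ cannot be made self-dual for the second form. The Gram determinant of $\{v_i\}$ under $\B_{-\nu\gamma}$ is, up to a unit, $N(\gamma)/2^{2n}=\pm e^2/2^{2n}$ times the (unit) Gram determinant of $\{u_i\}$; since $v_2(e)\ge 2n$, this has $2$-adic valuation at least $2n>0$. So the Gram matrix is \emph{not} a unit multiple of $B$, and no isometry of the second summand can fix this, because isometries preserve the discriminant of a lattice. The correct second lattice $I_2$ must satisfy $N(I_2)^2=N(-\gamma\nu)^{-1}$, so it is strictly \emph{larger} than $\tfrac{\beta}{2}I_1$: one must enlarge the lattice, not rescale a basis. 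The paper does this by noting that $I_1$ and $\gamma^{-1}I_1$ are dual to each other for $\B_{\nu\gamma}$, putting that form into Cassels' $2$-adic normal form on $I_1$, normalizing the powers of $2$ so only an even number of ``$2\times(\textrm{unit})$'' diagonal entries remain (even by a discriminant-parity argument), and then gluing these in pairs into self-dual rank-$2$ lattices via the explicit Lemma \ref{two} (spanned by $(f_1\pm f_2)/2$). That enlargement step, which needs no divisibility hypotheses, is entirely missing from your argument.
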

We spend the bulk of this section proving Theorem \ref{psolintorb} and Proposition \ref{solintorb2}. We also describe how $\Gij$-orbits behave over arithmetic fields. 

\subsection{Finite fields of odd characteristic}
For this subsection, let $k = \F_q$, a finite field with $q$ elements where $q$ is odd. Lang's Theorem implies $H^1(k,\Gij)$ is trivial. Therefore, for $c \in \Invrs(\F_q)$, the number of $\F_q$-orbits with invariants $c$ equals the cardinality of $H^1(k,\Stab_c)$. 

Similarly, $H^1(k,J_{i,c})$ also equals zero. Therefore, the map from $J_{i,c}(k)/2J_{i,c}(k)$ to $H^1(k, J_{i,c}[2])$ is an isomorphism. Hence, every $\Gij(k)$-orbit is soluble when $k$ is a finite field. 

\subsection{The $p$-adics for $p \neq 2$}
Let $k=\Q_p$, where $p \neq 2$. Let $c \in \Invrs(\Q_p)$. We have the following well known result about soluble orbits (for instance, see \cite{Stoll}):
\begin{Proposition}\label{pstoll}
Let $J$ be the Jacobian of a hyperelliptic curve over $\Q_p$. The quantity $b_{p}=\displaystyle{\frac{J(\Q_{p})/2J(\Q_{p})}{J[2](\Q_{p})}}$ is 1, independent of $J$.
\end{Proposition}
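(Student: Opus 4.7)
The plan is to exploit the fact that $J(\Q_p)$ is a compact $p$-adic Lie group with a natural pro-$p$ filtration coming from the N\'eron model, together with the fact that $2 \in \Z_p^{\times}$ when $p \neq 2$. Let $\mathcal{J}$ denote the N\'eron model of $J$ over $\Z_p$. By the N\'eron mapping property, $J(\Q_p) = \mathcal{J}(\Z_p)$, and reduction modulo $p$ produces a surjective homomorphism onto $\tilde{\mathcal{J}}(\F_p)$, the $\F_p$-points of the special fiber. This yields the short exact sequence
\begin{equation*}
0 \longrightarrow J_1(\Q_p) \longrightarrow J(\Q_p) \longrightarrow \tilde{\mathcal{J}}(\F_p) \longrightarrow 0,
\end{equation*}
where $J_1(\Q_p)$ denotes the kernel of reduction.

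The key step is to observe that $J_1(\Q_p)$ is the group of $\Z_p$-points of the formal group associated to the identity component $\mathcal{J}^0$, hence is a pro-$p$ group. Since $p \neq 2$, multiplication by $2$ is a topological automorphism of $J_1(\Q_p)$, and consequently both $J_1(\Q_p)[2]$ and $J_1(\Q_p)/2J_1(\Q_p)$ vanish.

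Applying the snake lemma to the vertical multiplication-by-$2$ map on the exact sequence above will then produce canonical isomorphisms
\begin{equation*}
J[2](\Q_p) \xrightarrow{\sim} \tilde{\mathcal{J}}(\F_p)[2], \qquad J(\Q_p)/2J(\Q_p) \xrightarrow{\sim} \tilde{\mathcal{J}}(\F_p)/2\tilde{\mathcal{J}}(\F_p).
\end{equation*}
Since $\tilde{\mathcal{J}}$ is of finite type over $\F_p$, the group $\tilde{\mathcal{J}}(\F_p)$ is a finite abelian group, and for any such finite abelian group $G$ one has $|G[2]| = |G/2G|$. Dividing the two cardinalities then gives $b_p = 1$.

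The main obstacle I anticipate is handling the case of bad reduction cleanly: the special fiber $\tilde{\mathcal{J}}$ need not be proper, so a naive reduction argument using a smooth projective model of $C_i$ directly would fail. This is precisely what forces the use of the N\'eron model, which guarantees that the reduction sequence above exists and that $\tilde{\mathcal{J}}(\F_p)$ remains finite even when $J$ has bad reduction. Once that framework is in place and $J_1(\Q_p)$ is identified with the formal group's $\Z_p$-points, the rest of the argument is essentially formal.
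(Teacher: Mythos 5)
Your argument is correct. The paper does not prove this proposition at all --- it is quoted as a ``well known result'' with a pointer to Stoll's 2-descent paper --- and your N\'eron-model/snake-lemma argument is precisely the standard proof behind that citation (equivalently: $J(\Q_p)$ has a finite-index open subgroup isomorphic to $\Z_p^g$ via the formal group, and the quotient $|G/2G|/|G[2]|$ is multiplicative in exact sequences and trivial on finite groups and on $\Z_p^g$ when $2\in\Z_p^\times$). The only thing worth flagging is that the hypothesis $p\neq 2$ is not stated in the proposition itself but is essential to your step that multiplication by $2$ is an automorphism of the pro-$p$ kernel of reduction; it is implicit in the paper since the proposition sits in the subsection on the $p$-adics for $p\neq 2$, and the $2$-adic analogue (Proposition \ref{2stoll}) indeed gives $2^g$ rather than $1$.
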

We now give an ideal-theoretic description of integral orbits. We first cite a result of \cite{BG} which we will need:
\begin{Lemma}\cite[Lemma 15]{BG}\label{BGintorb}
Let $I$ be a $\Z_p$-module of rank $n$ equipped with a symmetric binear form $I \times I \rightarrow \Z_p$. Suppose that $I \otimes_{\Z_p} \Q_p$ is split. If the discriminant of $I$ is $1$, then $I$ is isometric to $D_1$, and if the discriminant is $-1$, then $I$ is isometric to $D_2$. 
\end{Lemma}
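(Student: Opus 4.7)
The plan is to argue by induction on $n$, reducing to the rank-one case by peeling off hyperbolic planes one at a time. Throughout I use that $p \neq 2$ (the ambient subsection's hypothesis), so that $2 \in \Z_p^\times$.

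First I would establish that for $n \geq 2$ the lattice $I$ contains a primitive isotropic vector. Since $\operatorname{disc}(I) = \pm 1$ is a unit, the reduction $\overline I = I/pI$ is a non-degenerate quadratic $\F_p$-space of dimension $n$. For $n \geq 3$ such a form is automatically isotropic by the Chevalley--Warning point count on finite-field quadrics; for $n = 2$ the splitness of $I \otimes \Q_p$ combined with unit discriminant forces $\overline I$ to be hyperbolic. In either case Hensel's lemma lifts a nonzero isotropic vector mod $p$ to a primitive isotropic $v \in I$.

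Next, given such $v$, self-duality of $I$ produces some $w_0 \in I$ with $\langle v, w_0 \rangle = 1$. The substitution $w := w_0 - \tfrac{1}{2} Q(w_0)\, v$, which is legitimate precisely because $2 \in \Z_p^\times$, yields an isotropic companion satisfying $Q(w) = 0$ and $\langle v, w \rangle = 1$. Then $H = \Z_p v \oplus \Z_p w$ is a hyperbolic plane with Gram matrix of unit determinant, so $I = H \oplus H^\perp$ as an orthogonal direct sum, and $H^\perp$ is again self-dual, of rank $n-2$, still split over $\Q_p$, with discriminant $-\operatorname{disc}(I)$. Iterating this splitting $(n-1)/2$ times (recall $n$ is odd) reduces to the rank-one case, where a self-dual $\Z_p$-lattice is $\langle u \rangle$ for some $u \in \Z_p^\times$, with isometry class determined by $u$ modulo squares, hence by its discriminant.

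To close the argument, I would run the same reduction starting from $D_1$ and from $D_2$ and match the residual rank-one forms with those obtained from $I$, concluding by invoking the classification of rank-one unimodular $\Z_p$-forms. The main obstacle is bookkeeping: one must carefully track how the sign of the discriminant flips each time a hyperbolic plane is split off, and verify that the two standard lattices $D_1, D_2$ are genuinely distinguished by the condition $\operatorname{disc} = +1$ versus $\operatorname{disc} = -1$ (rather than merely by a class in $\Z_p^\times/(\Z_p^\times)^2$, which could collapse the two cases when $-1$ is a square in $\Z_p$). Once the discriminant accounting is done, the underlying geometry is routine for $p$ odd.
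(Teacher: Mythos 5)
Your argument is essentially correct, but note that the paper does not prove this statement at all: it is quoted verbatim as \cite[Lemma 15]{BG}, so there is no internal proof to compare against. What you have written is the standard classification of unimodular $\Z_p$-lattices for odd $p$ (peel off hyperbolic planes via Hensel-lifted isotropic vectors, reduce to rank one, where the class is the discriminant in $\Z_p^\times/(\Z_p^\times)^2$), and each step checks out: the reduction mod $p$ is nondegenerate because the discriminant is a unit, Chevalley--Warning gives isotropy in rank $\geq 3$, the correction $w = w_0 - \tfrac12 Q(w_0)v$ is valid for $p$ odd, and a unimodular sublattice splits off orthogonally. The discriminant bookkeeping you flag is genuinely routine once one fixes the normalization $\Disc = (-1)^{n(n-1)/2}\det$ implicit in the paper (under which splitting off a hyperbolic plane preserves the discriminant, and $D_1$, $D_2$ have discriminants $1$ and $-1$); and the "collapse" when $-1 \in (\Z_p^\times)^2$ is not an obstacle, since then $D_1 \cong D_2$ and both conclusions of the lemma hold simultaneously. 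The one real gap relative to the paper's intent: the sentence immediately after the lemma says the splitness hypothesis is included precisely so that the result "holds even for $\Z_2$", and the $2$-adic case is used later (in the proof of Proposition \ref{solintorb2}). Your proof is confined to odd $p$ by the division by $2$ and by the diagonal-reduction picture; at $p=2$ one needs the Cassels-style block decomposition into unit blocks, $H$, and $H_0$, together with the splitness of the $\Q_2$-form, to pin down the isometry class. So your write-up settles the lemma in the ambient subsection's setting ($p \neq 2$) but not in the full generality the paper later relies on.
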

The condition that the form be split is unneccessary. However, we have added because then the result holds even for $\Z_2$. 

Using this result, we will give an ideal-theoretic description of $Z_p$-orbits of $\Gij(\Z_p)$. An element $\alpha$ in $D_1 \otimes D_2$ corresponds to an operator $T$ from $D_1 \oplus D_2$ to $D_2 \oplus D_1$, just as in the case of fields. Let $f$ and $g$ be as above. We identify $V_1 \oplus V_2$ with $M$ (the etale $\Q_p$-algebra $\Q_p[x]/(f(x^2))$ as defined in \S 4). Since $T$ is integral, the lattice $D_1 \oplus D_2$, is realised as a $\Z_p[x]/(f(x^2))$ submodule of $M$, which we call a fractional ideal $J$ of $\Z_p[x]/(f(x^2))$. We can in fact say more - that $T^2$ stabilizes each of the $D_i$ forces the fractional ideal $J$ to be of the form $J = I_1 + \beta I_2$, $I_1$ and $I_2$ being fractional ideals of the ring $\Z_p[x]/(f(x))$. Here, we identify $I_i$ with $D_i$. Since $T$ maps $D_1$ to $D_2$, we have $I_1 \subset I_2$. Similarly, we must have $\gamma I_2 \subset I_1$. 

The bilinear form on $M$ is of the form $\Bij_{\nu}$ for $\nu \in L^{\times}_{N=1}$. The conditions that the lattices $D_i$ are self dual translate to $\nu I_1^2 \subset \mathbb{Z}_p [x] /f(x)$, and $N(I_1)^2 = N(\nu)^{-1}$,  $\nu \gamma I_2^2 \subset \mathbb{Z}_p [x] /f(x)$, and $N(I_2)^2 = N(-\gamma \nu)^{-1}$. In sum, we have just proved the following proposition.
\begin{Proposition}\label{classint}
Assume that $f,e$ with $f(x) = x^n + a_1 x^{n-1} + ... + a_{n-1}x + a_0,$ $e^2 = a_0 \neq 0$, is a polynomial with coefficients in $\Z_p$ with nonzero discriminant. Then the integral orbits of $\Gij(\Z_p)$ on $D_1 \otimes D_2$ with invariants $a_i,e$ correspond to equivalence classes of triples $(I_1, I_2,\nu)$. Further, $\nu \in L^{\times}/L^{\times2}_{N=1}$, and the $I_i$ are fractional ideals for the order $R = \Z[x]/(f(x))$ satisfying $I_1 \subset I_2 \subset \gamma^{-1}I_1$. The element $\nu$ has the properties that the bilinear forms $\Bij_{\nu}$, $\Bij_{-\nu\gamma}$  are split forms over $\Q_p$, that $\nu I_1^2 \subset \mathbb{Z_p} [x] /f(x)$,  $\nu \gamma I_2^2 \subset \mathbb{Z_p} [x] /f(x)$, and that $N(I_1)^2 = N(\nu)^{-1}$, $N(I_2)^2 = N(-\gamma \nu)^{-1}$. The triple $(I_1, I_2,\nu)$ is equivalent to $(I'_1, I'_2,\nu')$ if $I_i = \lambda I'_i$ and $\nu' = \lambda^2 \nu$, for some $\lambda \in L^{\times}$. The integral orbit corresponding to the triple $(I_1,I_2,\nu)$ maps to the rational orbit of $\Gij (\Q_p)$ on $\Vij (\Q_p)$ corresponding to the class of $\nu$ in $(L^{\times}/L^{\times2})_{N=1}$.
\end{Proposition}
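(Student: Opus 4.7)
\textbf{Proof plan for Proposition \ref{classint}.} The prose preceding the statement already sets up the forward direction, so I would begin by tightening it into a clean argument. Given an integral orbit represented by $T \in D_1 \otimes D_2$ with invariants $c$, the self-adjoint operator $T$ on $V = V_1 \oplus V_2$ has characteristic polynomial $g(x) = f(x^2)$, which makes $V$ into a module over $\Q_p[x]/(g(x))$. Following Section 4, I choose an isometry $V \cong M$ sending the action of $T$ to multiplication by $\beta$ and respecting the decomposition $V_1 \oplus V_2 \cong L \oplus L\beta$. Under this isometry $D_1 \oplus D_2$ becomes a $\Z_p[x]/(g(x))$-submodule of $M$; because $T^2$ preserves each $D_i$ and $T$ swaps $V_1$ and $V_2$, this submodule decomposes as $I_1 \oplus \beta I_2$ for fractional $R = \Z_p[x]/(f(x))$-ideals $I_i$, with $I_1 \subset I_2$ and $\gamma I_2 \subset I_1$ forced by $T(D_1) \subset D_2$ and $T(D_2) \subset D_1$.

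Over $\Q_p$, Section 4 classifies the bilinear forms on $M$ compatible with $T_\beta$ as $\Bij_\nu(\lambda,\mu) = \Tr_{M/\Q_p}(f'(\gamma)\nu\lambda\mu)$ for some $\nu \in L^\times_{N=1}$, with split decomposition $\Bij_\nu = \B_\nu \oplus \B_{-\nu\gamma}$. I would now translate the self-duality of $D_1$ under $Q_1$ into the two conditions $\nu I_1^2 \subset R$ (equivalent to $I_1$ being contained in its $\B_\nu$-dual) and $N(I_1)^2 = N(\nu)^{-1}$ (equivalent to the lattice discriminant being a unit, hence to equality with the dual). The identical calculation for $\beta I_2$ with respect to $\B_{-\nu\gamma}$ yields the remaining conditions; by construction $\nu$ represents the same class in $(L^\times/L^{\times 2})_{N=1}$ that parameterizes the rational orbit in Section 4.

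For the inverse direction, given a triple $(I_1, I_2, \nu)$ satisfying the hypotheses, I would endow the $\Z_p$-lattices $I_1$ and $\beta I_2$ with the restrictions of $\B_\nu$ and $\B_{-\nu\gamma}$. The containment and norm conditions guarantee self-duality, and the splitness assumption together with Lemma \ref{BGintorb} supplies isometries $(I_1, \B_\nu) \cong (D_1, Q_1)$ and $(\beta I_2, \B_{-\nu\gamma}) \cong (D_2, Q_2)$, where the discriminants $1$ and $(-1)^n = -1$ come out of the trace-form computation in Section 4. Transporting multiplication by $\beta$ through these isometries produces an integral self-adjoint operator on $D_1 \oplus D_2$ with invariants $c$. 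For the equivalence relation, I would observe that replacing $(I_1, I_2, \nu)$ by $(\lambda I_1, \lambda I_2, \lambda^2 \nu)$ for $\lambda \in L^\times$ corresponds to precomposing the isometry with multiplication by $\lambda$; this intertwines $T_\beta$ and rescales $\Bij_\nu$ correctly, producing a $\Gij(\Z_p)$-equivalence of the resulting orbits. Conversely, any two triples yielding the same integral orbit must differ in this way, because the centralizer of $T_\beta$ in $\GL(V_1) \times \GL(V_2)$ is precisely $L^\times$.

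The main obstacle I expect is the dual-lattice bookkeeping in the second paragraph: showing cleanly that self-duality of $I_1$ under $\B_\nu$ is equivalent to the conjunction of $\nu I_1^2 \subset R$ and $N(I_1)^2 = N(\nu)^{-1}$, with the signs and norm-form conventions matched correctly, and likewise for $I_2$. Everything else is formal once this correspondence is pinned down; in particular, the appeal to Lemma \ref{BGintorb}, combined with the splitness of $\Bij_\nu$ and $\Bij_{-\nu\gamma}$ over $\Q_p$, ensures that the self-dual lattices produced are genuinely isometric to $D_1$ and $D_2$ rather than merely abstractly self-dual.
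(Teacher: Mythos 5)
Your proposal is correct and follows essentially the same route as the paper, which establishes the forward direction in the paragraph preceding the proposition (identifying $D_1\oplus D_2$ with $I_1+\beta I_2\subset M$ and translating self-duality into the ideal and norm conditions) and leaves the converse implicit. Your added details on the inverse construction via Lemma \ref{BGintorb} and on the equivalence relation via the centralizer $L^{\times}$ are exactly the intended (and correct) way to complete the sketch.
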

The condition that the forms $\Bij_{\nu}$ and $\Bij_{-\gamma\nu}$ are split is unneccesary. However, we have added it, because it makes the result hold even for $\Z_2$.

In case $R=\Z_p[x]/(f(x))$ happens to be the maximal order, we see that the integral orbits are in bijection with $(R^{\times}/R^{\times2})_{N\equiv 1}$. This is always true when $p$ does not divide $\Disc(f(x))$ (equivalently, when $J_1$ has good reduction). In this case, the $1$-soluble $\Gij(\Q_p)$ orbits have a particularly nice form:
\begin{Proposition}\label{vtemp}
If $p$ does not divide the discriminant of $f(x)$, then the integral orbits with invariants $c$ are in bijection with 1-soluble orbits. 
\end{Proposition}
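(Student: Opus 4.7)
The plan is to combine the ideal-theoretic classification of integral orbits from Proposition \ref{classint} with the standard identification of the image of the $2$-descent map at a prime of good reduction.

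First, under the hypothesis $p \nmid \Disc(f(x))$, the ring $R = \Z_p[x]/(f(x))$ coincides with the maximal order $\mathcal{O}_L = \prod_i \mathcal{O}_{L_i}$, where each $L_i/\Q_p$ is unramified; in particular $R$ is a product of discrete valuation rings. I will use the equivalence $(I_1, I_2, \nu) \sim (\lambda I_1, \lambda I_2, \lambda^2 \nu)$ from Proposition \ref{classint} to normalize $I_1 = R$, after which the containment $\nu I_1^2 \subset R$ together with $N(I_1)^2 = N(\nu)^{-1}$ forces $\nu \in R^\times$. A short local computation then shows that the remaining constraints determine $I_2$ uniquely, using the observation that good reduction forces each $w_i(\gamma_i)$ (the normalized valuation of the image of $x$ in $\mathcal{O}_{L_i}$) to be even: at most one root $\gamma_j$ can have positive valuation, since the roots of $f$ are distinct modulo $p$, and the identity $N(\gamma) = -e^2$ pins that valuation to $2v(e)$. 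Consequently, integral orbits with invariants $c$ correspond bijectively to $(\mathcal{O}_L^\times / \mathcal{O}_L^{\times 2})_{N \equiv 1}$.

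Next, I will identify this latter group with the unramified subgroup $H^1_{\textrm{unr}}(\Q_p, \Stab_c) \subset H^1(\Q_p, \Stab_c)$ via Kummer theory applied to the \'etale $\Z_p$-group scheme $\Stab_c \simeq J_{1,c}[2]$ (which is \'etale over $\Z_p$ because $\Stab_c \simeq \Res_{\mathcal{O}_L/\Z_p}(\mu_2)_{N=1}$ is an unramified finite flat group scheme in the good reduction case). To finish, I need the image of $J_{1,c}(\Q_p)/2 J_{1,c}(\Q_p)$ in $H^1(\Q_p, J_{1,c}[2])$ to coincide with this unramified subgroup. The inclusion into $H^1_{\textrm{unr}}$ is automatic from good reduction of $J_{1,c}$, and by Proposition \ref{pstoll} one has $|J_{1,c}(\Q_p)/2 J_{1,c}(\Q_p)| = |J_{1,c}[2](\Q_p)|$. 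For any finite \'etale $\Z_p$-group scheme $M$ the standard identity $|H^1_{\textrm{unr}}(\Q_p, M)| = |M(\Q_p)|$ applies; combined with the two preceding facts, this forces the inclusion to be an equality.

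Putting the two steps together, integral orbits and locally $1$-soluble orbits with invariants $c$ both correspond to the same subset $(\mathcal{O}_L^\times / \mathcal{O}_L^{\times 2})_{N \equiv 1}$ of $H^1(\Q_p, \Stab_c)$, yielding the claimed bijection. The main obstacle I anticipate is the uniqueness of $I_2$ in the normalized triple $(R, I_2, \nu)$: the containments $R \subset I_2 \subset \gamma^{-1} R$ together with the norm identity $N(I_2)^2 = N(-\gamma\nu)^{-1}$ appear to leave room a priori, and collapsing them to a single ideal hinges on the parity statement for $w_i(\gamma_i)$ described above. Once that local arithmetic point is in hand, the comparison of $J_{1,c}(\Q_p)/2 J_{1,c}(\Q_p)$ with $H^1_{\textrm{unr}}$ is routine given Proposition \ref{pstoll}.
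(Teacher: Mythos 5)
Your argument is correct and takes essentially the same route as the paper, whose proof simply invokes the argument preceding \cite[Corollary 18]{BG}: identify the integral orbits with $(R^{\times}/R^{\times 2})_{N\equiv 1}$ using that $R$ is maximal (hence a product of discrete valuation rings, so $I_1$ can be normalized to $R$ and $\nu$ forced into $R^\times$), and match this subgroup of $H^1(\Q_p,\Stab_c)$ with the image of $J_{1,c}(\Q_p)/2J_{1,c}(\Q_p)$ via the unramified-cohomology cardinality count supplied by Proposition \ref{pstoll}. The one place you go beyond the paper's verbatim citation --- verifying that $I_2$ is uniquely pinned down even when $p\mid e$, so that $\gamma$ fails to be a unit in exactly one component --- is a genuine detail specific to this paper's two-ideal setup, and your Hensel-plus-norm computation forcing $k=v(e)$ handles it correctly.
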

\begin{proof}
The argument immediately preceding \cite[Corollary 18]{BG} applies verbatim.
\end{proof}
If $c$ modulo $p$ is actually regular semisimple, then both the $J_i$ have good reduction. In this case, we have the following strengthening of Proposition \ref{vtemp}: 
\begin{Proposition}
If $c$ modulo $p$ is regular semisimple, then the image of $J_1(\Q_p)$ in $H^1(k,\Stab_c)$ is the same as the image of $\J_2(\Q_p)$ (i.e. the $1$-soluble orbits are the same as the $(1,2)$-soluble orbits). Further, the  integral orbits with invariants $c$ are in bijection with 1-soluble orbits (and $(1,2)$-soluble orbits). 
\end{Proposition}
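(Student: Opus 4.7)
The plan is to leverage the fact that, under the hypothesis, both Jacobians $J_{1,c}$ and $J_{2,c}$ have good reduction at $p$, and then to identify the images of their respective Kummer maps with the full unramified cohomology subgroup of $H^1(\Q_p,\Stab_c)$. The assumption that $c\bmod p$ is regular semisimple means that $p$ divides neither $e$ nor $\Disc(f)$. The curve $C_{1,c}:y^2=f(x)$ then has good reduction, while $C_{2,c}:y^2=xf(x)$ has good reduction because $\Disc(xf(x))=\pm e^{4}\Disc(f)$ is again a unit modulo $p$. Consequently $J_{1,c}$ and $J_{2,c}$ have good reduction, and $J_{i,c}[2]$ is an unramified Galois module.

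Next I would invoke the standard fact that for an abelian variety $A/\Q_p$ with good reduction at the odd prime $p$, the Kummer embedding
\[ A(\Q_p)/2A(\Q_p)\hookrightarrow H^1(\Q_p,A[2]) \]
lands in, and in fact exhausts, the unramified subgroup $H^1_{\mathrm{nr}}(\Q_p,A[2])$. The inclusion is automatic from good reduction together with $p$ being odd. Equality follows by a cardinality count: Proposition \ref{pstoll} gives $|A(\Q_p)/2A(\Q_p)|=|A[2](\Q_p)|$, while for any finite unramified $\mathrm{Gal}(\Q_p^{\mathrm{nr}}/\Q_p)$-module $M$ one has $|H^1_{\mathrm{nr}}(\Q_p,M)|=|M^{\mathrm{Frob}_p}|$. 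Applied to $M=A[2]$, the reduction isomorphism $A[2](\Q_p)\cong A[2](\F_p)=M^{\mathrm{Frob}_p}$ forces both sides to match. Applying this to $A=J_{1,c}$ and $A=J_{2,c}$ and using the canonical Galois-equivariant identification $J_{1,c}[2]\cong\Stab_c\cong J_{2,c}[2]$, both images in $H^1(\Q_p,\Stab_c)$ coincide with $H^1_{\mathrm{nr}}(\Q_p,\Stab_c)$, which proves the first assertion.

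For the second assertion, the hypothesis immediately gives $p\nmid\Disc(f)$, so Proposition \ref{vtemp} already provides the bijection between integral $\Gij(\Z_p)$-orbits with invariants $c$ and $1$-soluble orbits. Combining this with the first assertion yields the bijection with $(1,2)$-soluble orbits. The main step that deserves some care is the verification that $J_{2,c}$ also has good reduction, which rests on the discriminant identity $\Disc(xf(x))=\pm e^{4}\Disc(f)$; once that is in hand, everything else is a routine combination of the unramified cohomology count with Propositions \ref{pstoll} and \ref{vtemp}.
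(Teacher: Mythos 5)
Your proposal is correct and follows exactly the route the paper intends: the paper omits this proof, saying only that it mimics Proposition \ref{vtemp} (which in turn defers to the argument preceding \cite[Corollary 18]{BG}), and that argument is precisely your identification of both Kummer images with the unramified subgroup of $H^1(\Q_p,\Stab_c)$ via good reduction and the counting from Proposition \ref{pstoll}. Your observation that $\Disc(xf(x))=\pm e^4\Disc(f)$ is a unit, so that $J_{2,c}$ also has good reduction, is the one detail specific to this proposition, and you have it right.
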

We omit the proof, as it mimics that of Proposition \ref{vtemp}. We now turn to the proof of Theorem \ref{psolintorb}. 
\begin{proof}[Proof of Theorem \ref{psolintorb}]
Suppose that $\nu \in (L^{\times}/L^{\times2})_{N=1}$ corresponds to a 1-soluble $\Gij(\Q_p)$-orbit. As both $\nu$ and $-\nu\gamma$ are 1-soluble, by \cite[Proposition 19]{BG}, there exist ideals $I_1$ and $I_2$ which satisfy all the properties of the previous proposition, except perhaps for the conditions $I_1 \subset I_2 \subset \gamma^{-1}I_1$. We will work with $I_1$ and deduce the existence of $I_2$ from it, where $I_2$ satisfies the inclusion conditions. 

A fractional ideal of $R$ corresponds to a full-rank $Z_p$ module contained in $L$, which is stable by multiplication by $\gamma$. Clearly, any lattice $\Lambda$, with $I_1 \subset \Lambda \subset \gamma ^{-1} I_1$ is stable under multiplcation by $\gamma$, and hence must be a fractional ideal. Therefore, we just need to find a $\Lambda$ satisfying the above inclusion relations, and which is self dual for the bilinear form $\Bij_{\nu\gamma}$. 

Note that by choice $I_1$ is self dual for the bilinear form $\Bij_{\nu}$, therefore $I_1$ and $\gamma^{-1} I_1$ are dual to each other for the form $\Bij_{\nu \gamma}$. By a result of Cassels \cite[Lemma 3.4]{Cassels}, there exists a $\mathbb{Z}_p$ basis $(f_i)$ of $I_1$ such that the Gram matrix for $\Bij_{\nu \gamma}$ is 
\begin{equation*}
\left(
\begin{array}{cccc}
u_1p^{b_1}&&& \\
&u_2p^{b_2} &&\\
&&\ddots &\\
&&&u_np^{b_n}
\end{array}
\right)
\end{equation*} where the $u_i$ are units in $\Z_p$. 

By replacing $f_i$ by $p^{-[b_i/2]}f_i$, we may assume that the $b_i$ are all $1$ or $0$. It is clear that the lattice $\Lambda$ spanned by $f_i$ is still sandwiched between $I_1$ and $\gamma^{-1}I_1$. Suppose that $\Lambda = \Lambda_0 \oplus  \Lambda_1$ ,where $\Lambda_j$ is the $\Z_p$-span of those $f_i$ with $b_i = j$ ($j = 0,1$). Since the discriminant of $B_2$ is 1 modulo squares (and therefore has to have even $p$-adic valuation), the dimension of $\Lambda_1$ is forced to be even (hence, the dimension of $\Lambda_0$ is odd). Let the dimension of $\Lambda_1$ be $2a$. Without any loss of generality, assume that $\Lambda_1$ is spanned by $f_1, \hdots f_{2a}$.

In particular, $\Lambda_0$ is a quadratic space of odd dimension, with the form being non-degenerate modulo $p$. Therefore, $\Lambda_0 \otimes \Q_p$ is a split quadratic space. Suppose that $\Lambda_1 \otimes \Q_p$ were also a split space. Then, by choosing a different basis $f'_1 \hdots f'_{2a}$ of $\Lambda_1$, we may assume that the Gram matrix of $\frac{1}{p} B_2$ restricted to $\Lambda_1$ is $B$. 

By replacing $\Lambda_1$ by the span of $f'_1/p, \hdots, f'_a/p, f'_{a+1},\hdots,f'_{2a} $, we see that $\Lambda = \Lambda_0 \oplus \Lambda_1$ is now self dual for $B_2$, and that $I_1 \subset \Lambda \subset \gamma^{-1}I_1$. 

Therefore, it remains to show that $\Lambda_1$ is split i.e. the Hasse invariant and the discriminant are both 1. As $(\Lambda_0 \oplus \Lambda_1) \otimes \Q_p$ with $B_2$ is also split, this means that the Hasse invariant $B_2$ is 1. Computing the Hasse invariant using the Gram matrix of $B_2$ in the basis $f_i$, it is clear that it equals the Hasse invariant of $B_2$ restricted to $\Lambda_1$, which therefore has to have  Hasse invariant 1. In terms of the $u_i$ and $m$, the Hasse invariant equals $\displaystyle (-1)^{\epsilon(p)m}\prod_{i=1}^{2a} \left(\frac{u_i}{p}\right)$ where $\epsilon(p) = (p-1)/2$. The discriminant of $\Lambda_1$ equals $(-1)^m\displaystyle{\prod_{i=1}^{2a}}u_i$ modulo squares. By definition, $u_i $ modulo squares in $\Z_p^{\times}$ equals $\displaystyle \left(\frac{u_i}{p}\right)$ (this is after identifying $\Z_p\x$ modulo squares with the group $\{\pm1\}$). Further, modulo squares $\displaystyle (-1)^m =   (-1)^{\epsilon(p)m}$ - they both equal $(-1)^m$ if $p$ is not 1 modulo 4, and both equal $1$ if $p$ is 1 modulo 4. Therefore, the Hasse invariant being 1 forces the discriminant of $\Lambda_1$ to be 1, thereby proving the theorem.
\end{proof}

\subsection{The 2-adics}
Let $k = \Q_2$. We state the 2-adic analogue of Proposition \ref{pstoll} (again, see \cite{Stoll}):
\begin{Proposition}\label{2stoll}
Let $J$ be the Jacobian of a genus $g$ hyperelliptic curve. Then the quantity $2^g=b_{2}=\displaystyle{\frac{J(\Q_{2})/2J(\Q_{2})}{J[2](\Q_{2})}}$ depends only on $g$, and not on $J$.
\end{Proposition}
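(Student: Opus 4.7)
The plan is to invoke the standard local volume/point-count formula for abelian varieties over local fields, and specialize it. For any abelian variety $A$ of dimension $g$ over a finite extension $K$ of $\Q_p$, one has the identity
\[
\frac{\#\bigl(A(K)/nA(K)\bigr)}{\#A(K)[n]} \;=\; \frac{1}{|n|_K^{\,g}},
\]
where $|\cdot|_K$ is the normalized absolute value on $K$. Granting this, for $K=\Q_2$ and $n=2$ the right side is $2^g$, which manifestly depends only on $g$ (and in particular not on the isogeny class of $J$, nor on the defining polynomial of the hyperelliptic curve). This is exactly the content of the proposition.

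The proof of the displayed identity proceeds in three steps. First, using the formal group (equivalently, the $p$-adic exponential) attached to $A$, one shows that $A(K)$ contains an open subgroup $A^1(K)$ of finite index which is isomorphic, as a topological $\Z_p$-module, to $\mathcal{O}_K^{\,g}$. Second, one applies the snake lemma to the multiplication-by-$n$ map on the short exact sequence
\[
0 \longrightarrow A^1(K) \longrightarrow A(K) \longrightarrow A(K)/A^1(K) \longrightarrow 0,
\]
observing that the quotient $A(K)/A^1(K)$ is finite, so that multiplication by $n$ on it has equal kernel and cokernel sizes; these contributions therefore cancel in the ratio. Third, one computes directly that
\[
\frac{\#\bigl(\mathcal{O}_K^g/n\mathcal{O}_K^g\bigr)}{\#\mathcal{O}_K^g[n]} \;=\; \#\bigl(\mathcal{O}_K/n\mathcal{O}_K\bigr)^g \;=\; |n|_K^{-g},
\]
since $\mathcal{O}_K^g$ is torsion-free.

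Specializing to $K=\Q_2$, $n=2$ gives $b_2 = 2^{\,g}$, and the proof is complete. The only real obstacle is the first step, which requires the standard fact (going back to Mattuck) that a $p$-adic abelian variety is, after passage to a finite-index subgroup, a $\Z_p$-module of the expected rank; we may take this from the literature (e.g.\ \cite{Stoll}, where this exact local quotient is computed for hyperelliptic Jacobians). All the remaining manipulations are routine finite-group bookkeeping.
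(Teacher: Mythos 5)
Your argument is correct and is precisely the standard computation behind this fact: the paper itself offers no proof, simply citing \cite{Stoll}, where the identity $\#\bigl(J(K)/nJ(K)\bigr)/\#J(K)[n]=|n|_K^{-g}$ is established by exactly the route you describe (a finite-index subgroup isomorphic to $\mathcal{O}_K^{\,g}$ via the formal group, the snake lemma, and the triviality of the contribution from the finite quotient). Specializing to $K=\Q_2$, $n=2$ gives $b_2=2^g$, so your proposal matches the intended argument.
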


We now prove Proposition \ref{solintorb2}. As in the proof of Theorem \ref{psolintorb}, we will use the existence of the ideal $I_1$ as proved in \cite{BG}, and deduce the existence of $I_2$. We need the divisibility condition because Bhargava and Gross need them to deduce the existence of $I_1$ (we would not need these conditions to deduce the existence of $I_2$, if we were guarenteed the existence of $I_1$). 
\begin{proof}[Proof of Proposition \ref{solintorb2}]
Suppose that $\alpha \in (L^{\times}/L^{\times2})_{N=1}$ corresponds to a 1-soluble $\Q_2$ orbit. The first part of the proof proceeds along exactly the same lines - we will still use \cite{BG} for the existence of $I_1$. We still have $I_1$ is self dual for the bilinear form $B_1 = \Bij_{\alpha}$, and therefore $I_1$ and $\gamma^{-1}I_1$ are dual to each other for the form $B_2 = \Bij_{\gamma \alpha}$. We apply the corresponding result of Cassels \cite[Lemma 4.1]{Cassels} to $\Z_2$ to find a suitable basis $(f_i)$ to express the Gram matrix $B_2(f_i,f_j)$ of $B_2$ in a suitable form. Cassels' result is in terms of the quadratic form associated to $B_2$ - translating this in terms of the bilinear form $B_2$ we have  
\begin{equation*}
B_2(f_i,f_j)=
\left(
\begin{array}{cccc}
2^{b_1}Q'_1&&& \\
&2^{b_2}Q'_2 &&\\
&&\ddots &\\
&&&2^{b_n}Q'_{k'}
\end{array}
\right)
\end{equation*} 
where each $b_i \geq 0$ and $Q'_i$ is either a $1 \times 1$ block consisting of $u_i \in \Z_2^{\times}$, or 
$$
Q'_i = H =  
\left(
\begin{array}{cc}
&1\\
1&
\end{array}
\right),$$ 
or
$$Q'_i = H_0 = 
\left(
\begin{array}{cc}
2&1\\
1&2
\end{array}
\right).$$ 
By construction, we know that the bilinear forms $B_1$ and $B_2$ are split. Therefore, by Proposition \ref{classint}, it suffices to find a lattice containing the $\Z_2$-span of the $f_i$ which is self-dual for $B_2$. By multiplying the basis vectors $f_i$ by appropriate negative powers of $2$ we may assume that all the $b_i$ are either $0$ or $1$. In fact, we may assume that $b_i =0$ if $Q' = H$ or $H_0$. Indeed, if a 2-dimensional vector space with basis $e_1,e_2$ has a bilinear form with Gram matrix $2H$ or $2H_0$, by replacing $e_1$ with $e_1/2$ (and leaving $e_2$ unchanged), we are left with a lattice that is self dual. In the first case the Gram matrix with respect to the new basis would be $H$. In the second case, the Gram matrix with respect to the new basis will be 
$$\left(
\begin{array}{cc}
1&1\\
1&4
\end{array}
\right).$$
The Gram matrix now has the form
$$\left(
\begin{array}{ccccccc}
2U_1&&&\\
&Q_2 &&\\
&&\ddots&\\
&&&Q_k
\end{array}
\right)
$$
where $U_1$ is a diagonal matrix of size $2a \times 2a$ consisting solely of units, and the $Q_i$ are either $1 \times 1$ or $2\times2$  matricies with unit determinant. The claim on the parity of the size of $U_1$ holds because the discriminant of $B_2$ in $\Q_2^{\times}$ is $-1$ modulo squares, and hence has even 2-adic valuation. The proposition follows from Lemma \ref{two} below. 
\end{proof}
\begin{Lemma}\label{two}
Let $\Lambda = \Z_2f_1 \oplus \Z_2f_2$ be equipped with a bilinear form whose Gram matrix in the basis $(f_1,f_2)$ is
$$\left(
\begin{array}{cc}
2u_1&0\\
0&2u_2
\end{array}
\right),$$where $u_1$ and $u_2$ are units in $\Z_2$. Then there exists a lattice $\Lambda' \supset \Lambda$ which is self dual for $B$. 
\end{Lemma}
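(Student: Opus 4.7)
The plan is to construct $\Lambda'$ explicitly by enlarging $\Lambda$ by a single half-integral vector. The natural candidate is $\Lambda' = \Lambda + \Z_2 \cdot \tfrac{f_1+f_2}{2}$, motivated by the following reasoning about lattices between $\Lambda$ and its dual.

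First I would compute the dual lattice. Since the Gram matrix of $\Lambda$ is $\mathrm{diag}(2u_1, 2u_2)$ with $u_1,u_2$ units, the dual lattice is $\Lambda^* = \tfrac{1}{2}\Z_2 f_1 \oplus \tfrac{1}{2}\Z_2 f_2$, so the quotient $\Lambda^*/\Lambda$ is isomorphic to $(\Z/2\Z)^2$. Any self-dual lattice $\Lambda'$ with $\Lambda \subset \Lambda' \subset \Lambda^*$ corresponds to a maximal isotropic subspace of $\Lambda^*/\Lambda$ with respect to the induced $\F_2$-valued bilinear form $\overline{B} \colon \Lambda^*/\Lambda \times \Lambda^*/\Lambda \to \tfrac{1}{2}\Z_2/\Z_2 \cong \F_2$; a quick dimension count shows such a subspace must be one-dimensional.

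In the basis $\{\bar{f_1}/2, \bar{f_2}/2\}$ of $\Lambda^*/\Lambda$, the induced form satisfies $\overline{B}(\tfrac{f_i}{2}, \tfrac{f_j}{2}) = \tfrac{\delta_{ij} u_i}{2}$; since each $u_i$ is odd, $u_i/2 \equiv 1/2 \pmod{\Z_2}$, so in the identification $\tfrac{1}{2}\Z_2/\Z_2 \cong \F_2$ sending $1/2 \mapsto 1$ the Gram matrix becomes the identity. I would then check that the vector $v = \tfrac{f_1+f_2}{2} \pmod{\Lambda}$ is isotropic for this form: indeed, $B(v,v) = \tfrac{1}{4}(2u_1 + 2u_2) = \tfrac{u_1+u_2}{2}$, which lies in $\Z_2$ precisely because $u_1+u_2$ is even, as both $u_i$ are odd units. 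Thus $W := \F_2 \cdot v$ is a one-dimensional totally isotropic subspace of $\Lambda^*/\Lambda$.

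Finally, I would let $\Lambda'$ be the preimage of $W$ in $\Lambda^*$, i.e.\ $\Lambda' = \Lambda + \Z_2 \cdot \tfrac{f_1+f_2}{2}$, and verify self-duality directly. In the basis $g_1 = \tfrac{f_1+f_2}{2}$, $g_2 = f_1$, the Gram matrix is
\[
\begin{pmatrix} (u_1+u_2)/2 & u_1 \\ u_1 & 2u_1 \end{pmatrix},
\]
with determinant $(u_1+u_2)u_1 - u_1^2 = u_1 u_2$, which is a unit in $\Z_2$. Since $\Lambda'$ contains $\Lambda$ with discriminant a unit, it must equal its own dual. There is no serious obstacle here: the only thing to notice is the parity argument $u_1 + u_2 \in 2\Z_2$, which is automatic from $u_1,u_2 \in \Z_2^\times$ and is exactly what allows the half-integral vector $\tfrac{f_1+f_2}{2}$ to become integral for $B$.
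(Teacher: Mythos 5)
Your proof is correct and constructs exactly the same lattice as the paper: $\Lambda + \Z_2\cdot\tfrac{f_1+f_2}{2}$ coincides with the span of $\tfrac{f_1+f_2}{2}$ and $\tfrac{f_1-f_2}{2}$ used in the paper's one-line proof, since $\tfrac{f_1-f_2}{2} = \tfrac{f_1+f_2}{2} - f_2$. The only difference is that you supply the verification (integrality of the Gram matrix via $u_1+u_2 \in 2\Z_2$, and unit determinant $u_1u_2$) that the paper leaves to the reader.
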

\begin{proof}
The lattice $\Lambda'$ spanned by $(f_1 + f_2)/2$ and $(f_1 - f_2)/2$ has the required properties. 
\end{proof}

\subsection{Archimedean fields}
\subsubsection*{The complex numbers}
In the case $k = \C$, we work over an algebraically closed field, and so for every $c \in \Invrs$, there is precisely one $\Gij(\C)$ orbit with invariants $c$.
\subsubsection*{The real numbers}
We have the archimedean version of Propositions \ref{pstoll} and \ref{2stoll} (again, see \cite{Stoll}):
\begin{Proposition}\label{inftystoll}
Let $J$ be the Jacobian of a genus $g$ hyperelliptic curve. Then the quantity $2^{-g}=b_{\infty}=\displaystyle{\frac{J(\R)/2J(\R)}{J[2](\R)}}$ depends only on $g$, and not on $J$.
\end{Proposition}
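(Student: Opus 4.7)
The plan is to exploit the structure of $J(\R)$ as a real Lie group. Since $J$ is an abelian variety of dimension $g$ over $\R$, $J(\R)$ is a compact commutative real Lie group of real dimension $g$. Its connected component of the identity, $J(\R)^0$, is therefore a compact connected commutative Lie group of dimension $g$, hence isomorphic to the real torus $(S^1)^g$. I want to compute both $|J(\R)/2J(\R)|$ and $|J[2](\R)|$ in terms of the component group $\pi_0(J(\R))$, and show that each depends only on $g$ up to a common factor.

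The first key step is the exact sequence
\begin{equation*}
0 \longrightarrow J(\R)^0 \longrightarrow J(\R) \longrightarrow \pi_0(J(\R)) \longrightarrow 0
\end{equation*}
to which I apply the snake lemma for multiplication by $2$. On the torus $J(\R)^0 \cong (S^1)^g$, multiplication by $2$ is surjective with kernel $(\Z/2\Z)^g$, so $J(\R)^0/2J(\R)^0 = 0$ and $|J(\R)^0[2]| = 2^g$. The second key input is that $\pi_0(J(\R))$ is a finite elementary abelian $2$-group (this is a classical fact going back to Comessatti: complex conjugation on the complex torus $J(\C)$ yields a real structure whose component group of fixed points is killed by $2$). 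Hence on $\pi_0(J(\R))$ multiplication by $2$ is the zero map.

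Feeding these two facts into the snake lemma yields
\begin{equation*}
|J[2](\R)| = 2^g \cdot |\pi_0(J(\R))|, \qquad |J(\R)/2J(\R)| = |\pi_0(J(\R))|,
\end{equation*}
and therefore
\begin{equation*}
b_\infty = \frac{|J(\R)/2J(\R)|}{|J[2](\R)|} = \frac{1}{2^g},
\end{equation*}
which is the claimed formula and depends only on $g$.

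The main obstacle is confirming that $\pi_0(J(\R))$ is killed by $2$, since nothing about this has been discussed in the paper so far; I would either cite Gross--Harris or Comessatti for this, or give a brief argument using the description of $J(\R)$ as the fixed points of complex conjugation on $J(\C) = \C^g/\Lambda$, noting that if $x \in J(\R)$ lifts to $\tilde x \in \C^g$ then $\tilde x + \overline{\tilde x} \in \Lambda$, so $2\tilde x$ lies in a fixed real sublattice, which forces every class in $\pi_0$ to have order dividing $2$. All other steps are formal manipulations with the snake lemma and the structure of compact connected abelian Lie groups, which are completely independent of the particular hyperelliptic $J$ at hand.
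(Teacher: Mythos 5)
Your argument is correct, but it is worth noting that the paper does not actually prove this proposition: it simply cites Stoll's paper on implementing 2-descent for Jacobians of hyperelliptic curves, where the local quantity $b_\infty$ is computed as part of the explicit descent machinery. Your route is a genuinely self-contained alternative: applying the snake lemma to
\begin{equation*}
0 \longrightarrow J(\R)^0 \longrightarrow J(\R) \longrightarrow \pi_0(J(\R)) \longrightarrow 0
\end{equation*}
with the two inputs that $J(\R)^0 \cong (S^1)^g$ (so $J(\R)^0/2J(\R)^0=0$ and $|J(\R)^0[2]|=2^g$) and that $\pi_0(J(\R))$ is an elementary abelian $2$-group gives $|J[2](\R)| = 2^g\,|\pi_0(J(\R))|$ and $|J(\R)/2J(\R)| = |\pi_0(J(\R))|$, so the unknown component group cancels in the ratio. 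This is the cleanest feature of your approach: you never need to determine $|\pi_0(J(\R))|$ (which, by Comessatti, depends on the number of real components of the curve and genuinely varies with $J$), and the argument works verbatim for any abelian variety over $\R$, not just hyperelliptic Jacobians. One small slip in your sketch of the Comessatti input: if $x \in J(\R)$ lifts to $\tilde x \in \C^g$, the fixed-point condition gives $\tilde x - \overline{\tilde x} \in \Lambda$, not $\tilde x + \overline{\tilde x} \in \Lambda$; one then writes $2\tilde x = (\tilde x + \overline{\tilde x}) + (\tilde x - \overline{\tilde x})$, where the first summand lies in the real subspace $V^+=(\C^g)^{\sigma}$ whose image in $J(\C)$ is the connected component $J(\R)^0$, and the second lies in $\Lambda$; hence $2x \in J(\R)^0$ and $\pi_0(J(\R))$ is killed by $2$. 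With that correction (or with the citation to Gross--Harris or Comessatti you propose), the proof is complete.
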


Let $c \in \Invrs(\R)$, and let $f_c$ denote the corresponding polynomial. Define $\Invrs(\R)^{(a,b)}$ to be the set of $c \in \Invrs(\R)$ such that $f_c$ has $a$ pairs of complex conjugate roots, $b$ positive real roots, and $n-2a + b$ negative real roots. The stabilizer $\Stab_c$ (as a group scheme over $\R$) depends only on the $(a,b)$ such that $c \in \Invrs(\R)^{(a,b)}$. We call this $\Stab^{(a,b)}$. Let $\tauab$ denote the size of this group. 

Further, computing with the descent map for the Jacobians $J_1$ and $J_2$ shows that the number of 1-soluble orbits with invariants $c$ depends only on $(a,b)$. The same holds for $(1,2)$-soluble orbits. 

\subsection{Orbits over $\Q$ and $\Z$}
For the Jacobian $J$ of a hyperelliptic curve over $\Q$, the local constants $b_{\nu}$ clearly mulitply to yield 1. We conclude this section by demonstrating the existence of $\Z$-representatives of locally soluble $\Q$-orbits:
\begin{proof}[Proof of Theorem \ref{intorb}]
The split groups $\SO_n$ have class number 1. Therefore, $H = \SO(V_1)\times \SO(V_2)$ also has class number one. The same proof as in \cite{BS2selm} applies.
\end{proof}

\section{Counting integral orbits}
In this section, we use Bhargava's averaging technique to count the number of $\Gij(\Z)$-orbits on $\Vij(\Z)$ of height bounded by $X$, as in \cite{BS2selm}. Towards that end, we first define a height function on $\Vij(\R)$. Recall that $\Inv$ is the categorical quotient of $\Vij$ by $\Gij$, i.e. functions on $\Inv$ are $\Gij$ invariant functions on $\Vij$. The scheme $\Inv$ is defined over $\Z$, and equals $\Spec(\Z[a_1, \hdots a_{n-1},e])$. A point $c=(a_1, \hdots, a_{n-1},e)$ corresponds to the polynomial $f(x) = x^n + a_1 x^{n-1} + \hdots a_{n-1}x + e^2$. We define a height function on $\Inv(\R)$ as follows: 
$$\h(c) = \h(f) = \max \{ |a_i|^{1/2i},|e|^{1/n}\}.$$

The canonical map $\pi: \Vij \rightarrow \Inv$ is given by sending the operator $T$ to the non-constant coefficients of its characteristic polynomial, and the pfaffian. The height function $\h:\Vij(\R) \rightarrow \R$ (we use the same symbol, as for the height function on $\Inv(\R)$) is defined to be the composition of $\pi$ and $\h: \Inv(\R) \rightarrow \R$.

Notice that $\h$ is homogenous of degree 1 on $\Vij$, i.e. $\h(\lambda T) = \lambda \h(T)$, for $\lambda \in \R$ positive, $T \in \Vij(\R)$.
\subsection{Fundamental domains}
Let $\Vij(\R)_{\sol}$ denote the subset of $\Vij^{\textrm{rs}}(\R)$ consisting of elements which the property $\sol$, where $\sol$ either stands for 1-soluble or $(1,2)$-soluble. Depending on whether we want to count 1-soluble orbits or (1,2)-soluble orbits, we will choose $\sol$ to be 1-soluble, or (1-2)-soluble respectively. 

We partition $\Vij(\R)_{\sol}$ into sets indexed by $(a,b)$ with $2a + b \leq n$ as in \S 6.4:
$$\Vij(\R)_{\sol} = \bigcup_{(a,b)}\Vij(\R)^{(a,b)}_{\sol}$$
Here  $\Vij(\R)_{\sol}^{(a,b)}$ consists of $T \in \Vij(R)_{\sol}$ such that $\pi(T) \in \Inv(\R)^{(a,b)}$. Recall that the stabilizer of $T \in \Vij(\R)^{(a,b)}_{\sol}$ is independent of $T$. We have defined this group to be $\Stab^{(a,b)}$, and its size to be $\tauab$.

Similarly, let $\Vij(\Z)^{(a,b)} = \Vij(\Z) \cap \Vij(\R)_{\sol}^{(a,b)}$ (we have supressed the subscript $_{\sol}$). If we are working in the case where $\sol$ means 1-soluble, then we will acknowledge this with the notation $\Vij(\Z)_1^{(a,b)}$. Similarly, if $\sol$ stands for $(1,2)$-soluble, we will use the notation $\Vij(\Z)_{(1,2)}^{(a,b)}$. 
\subsubsection*{Fundamental sets for the action of $\Gij(\R)$ on $\Vij(\R)_{\sol}$}
We use a Kostant section (recall that there are two distinguished orbits, and therefore two different Kostant-sections) $\kappa: \Inv(\R) \rightarrow \Vij(\R)$ to define a fundamental set for the action of $\Gij(\R)$ on $\Vij(\R)_{\sol}^{(a,b)}$. Let $\Invrs(\R)^{(a,b)}$ denote the intersection of $\pi(\Vij(\R)^{(a,b)})$ and $\Invrs(\R)$. The number of $\Gij(\R)$-orbits having property $\sol$ depends only on $(a,b)$. Denote this number by $\tau^{(a,b)}_{\sol}$. Just as in \cite{BG}, there exist elements $h_1, \hdots, h_{\tau^{(a,b)}_{\sol}} \in \GL(V_1 \oplus V_2)$ such that $\D_{\sol}'^{(a,b)} = \bigcup_{i}h_i \kappa(\Inv(\R)^{(a,b)})h_i^{-1}$ is a fundamental set for the action of $\Gij(\R)$ on $\Vij(\R)^{(a,b)}$. 
We work with the fundamental set $\D_{\sol}^{(a,b)}$ where
$$\D_{\sol}^{(a,b)} = \R_{>0}\{T \in \D_{\sol}'^{(a,b)}: \h(T) = 1\}.$$
Notice that the size of the entries of any $T \in \D_{\sol}^{(a,b)}$ having height $X$ is bounded by $O(X)$. This is because $\{T \in \D_{\sol}'^{(a,b)}: \h(T) = 1\}$ is a bounded set. Let $\D_{\sol}^{(a,b)}(X)$ denote the subset of $\D_{\sol}^{(a,b)}$ consisting of elements having height bounded by $X$. 

For ease of notation, we will supress the subscript $_{\sol}$ while referring to $\D_{\sol}^{(a,b)}(X)$. For the better part of what follows, the results stay the same whether $\sol$ stands for 1-soluble, or (1,2)-soluble. We will revert to using the subscript only when it matters what $\sol$ stands for. We will then refer to $\Dabs(X)$ as $\Dab_1(X)$ if we are working with 1-soluble orbits, and to $\Dabs(X)$ as $\Dab_{1,2}(X)$ if we are working with (1,2)-soluble orbits.

\subsubsection*{A fundamental domain for the action of $\Gij(\Z)$ on $\Gij(\R)$}
We describe a fundamental domain $\Fu$ for the left action of $\Gij(\Z)$ on $\Gij(\R)$ as constructed by Borel in \cite{Borel}.

The set $\Fu$ may be expresed in the form $\Fu_1 \times \Fu_2$, where $\Fu_i$ is the fundamental domain for the action of $\SO(V_i)(\Z)$ on $\SO(V_i)(\R)$, $i\in \{1,2\}$. $\Fu_1$ may be expressed as 
$$ \Fu_1 = \{ur'\theta \ : \ u \in N_1'(r'),\ r' \in R', \ \theta \in K_1 \}$$ 
where $N'_1(r')$ is an absolutely bounded measurable set - which depends on $r' \in R$ - of unipotent lower triangular matricies; $R'$ is the subset of the torus of diagonal matricies with positive entries 
$$ \left[
\begin{array}{ccccccc}
r'^{-1}&&&&&& \\
&\ddots&&&&&\\
&&r_m'^{-1}\\
&&&1&&&\\
&&&&r'_m &&\\
&&&&&\ddots&\\
&&&&&&r'_1
\end{array}
\right]
$$
constrained by the relations $r'_1/r'_2 > c,\ \hdots, \  r_{m-1}/r'_m >c,\  r'_m>c$; and $K_1$ a maximal compact of $\SO(V_1)(\R)$. We will parameterize $R'$ differently. Indeed, for $i = 1,\ \hdots,\ m$ let $r_i = r'_i\hdots r'_m$. Then $r = (r_1, \hdots,r_m)$ belongs to $R'$ exactly when $r_i > c$. We define $\Fu_2$ similarly, and we denote by $S$ the diagonal torus, and by $s_i$, the analogus coordinates. Let $N'(t)$ denote the product of $N'_1(r) \times N'_2(s)$, and let $K$ denote the maximal compact subgroup of $\Gij(\R)$ given by the products of the $K_i$. We fix a Haar measure $dh$ on $\Gij(\R)$ by setting 
$$dh = \prod_{i = 1}^k(r_is_i)^{i^2 - 2im}\cdot du\cdot d^{\times}t \cdot d\theta,$$ 
where $du$ is an invariant measure on $N$, the group of unipotent lower triangular real matricies, $d\theta$ is the unique haar measure on $K$ giving it unit volume, and $d^{\times}t = d^{\times}rd^{\times}s = \frac{dr}{r} \frac{ds}{s}$ is a Haar measure on $R \times S$.

\subsubsection*{A fundamental domain for the action of $\Gij(\Z)$ on $\Vij(\R)_{\sol}$}
For $h \in \Gij(\R)$, we regard $\Fu h\cdot \Dab(X)$ as a multiset, where the multiplicity of $T \in \Fu h \cdot \Dab(X)$ is given by $\#\{h' \in \Fu : T \in h'h \cdot \Dab(X) \}$. The $\Gij(\Z)$ orbit of $T$ is counted with multiplicty $\displaystyle{\frac{\Stab_T(\R)}{\# (\Stab_T(\R) \cap \Gij(\Z))}}$. Let $\Stab_T(\Z)$ denote the denominator in the previous expression. 

The same argument as in \cite[\S 4.2]{SW} applies to conclude that $(\Stab_T(\R) \cap \Gij(\Z))$ is non-trivial only for a measure-zero set of $\Vij(\R)$. Recall that the group scheme $\Stab_T$ is constant over $T\in \Vij^{(a,b)}$, and $\tauab$ denotes the cardinality of $\Stab_T(\R)$. Therefore, the multiset $\Fu h \cdot \Dab(X)$ is a cover of a fundamental domain for $\Gij(\Z)$ on $\Vij(\R)^{(a,b)}$ of degree $\tauab$.
\subsection{Counting the number of integral orbits}
\begin{Definition}
An element $\alpha \in \Vij(\Q)$ is called irreducible if $\pi(\alpha) \in \Invrs(\Q)$ and if $\alpha$ is not distinguished.
\end{Definition}
For a $\Gij(\Z)$-invariant set $S \subset \Vij(\Z)^{(a,b)}$, define $N(S,X)$ to be the number of irreducible $\Gij(\Z)$ orbits of $S$ that have height bounded by $X$, where each orbit $\Gij(\Z) \cdot T$ is weighted by $1/\#\Stab_T(\Z)$. 
\begin{Theorem}\label{countint}
$$\displaystyle{N(\Vij(\Z)^{(a,b)},X) = \frac{1}{\tauab}\Vol(\Fu \cdot\D^{(a,b)}(X)) + o(X^{{n^2}})}$$
\end{Theorem}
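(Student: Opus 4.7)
The proof adapts Bhargava's averaging method, as implemented in \cite[\S 4]{BS2selm} and \cite[\S 4]{SW}, to the representation $\Vij = V_1 \otimes V_2$ of $\Gij = \SO(V_1) \times \SO(V_2)$. The plan is to convert the orbit count into an integral of lattice-point counts over the fundamental domain $\Fu$, use Davenport's lemma to extract the main term as the volume, and then control the cusp contribution via reducibility of integral points.

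I would begin by fixing a bounded open subset $G_0 \subset \Gij(\R)$ of positive finite Haar measure. From the multiset-cover property of $\Fu g_0 \cdot \Dab(X)$ established at the end of \S 7.1, one obtains
$$\tauab \cdot N(\Vij(\Z)^{(a,b)}, X) = \#\bigl\{T \in \Fu g_0 \cdot \Dab(X) \cap \Vij(\Z) : T \text{ irreducible}\bigr\}$$
for each $g_0 \in G_0$, modulo a measure-zero set of $T$ with nontrivial integral stabilizer (negligible by \cite[\S 4.2]{SW}). Averaging over $g_0 \in G_0$ and swapping the order of integration using the Iwasawa parameterization $h = urk$ and left-invariance of Haar measure yields
$$\tauab \cdot N(\Vij(\Z)^{(a,b)}, X) = \frac{1}{\Vol(G_0)} \int_{h \in \Fu} \#\bigl\{T \in hG_0 \cdot \Dab(X) \cap \Vij(\Z)^{\mathrm{irr}}\bigr\} \, dh.$$

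The next step is to apply Davenport's lattice-point lemma to each fiber: for each $h \in \Fu$, the unweighted count $\#\{T \in hG_0 \cdot \Dab(X) \cap \Vij(\Z)\}$ equals the Euclidean volume of the skewed box $hG_0 \cdot \Dab(X) \subset \R^{n^2}$ plus an error bounded by the maximum of the projection volumes onto $(n^2-1)$-dimensional coordinate hyperplanes. Integrating the main term against the Haar measure on $\Fu$ recovers exactly $\tauab^{-1} \Vol(\Fu \cdot \Dab(X))$ from the product structure of the measure.

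The main obstacle is the cusp analysis. Splitting $\Fu$ into a \emph{main body} with torus coordinates $r_i, s_i \leq Y(X)$ for some slowly growing $Y(X)$, and its \emph{cusp} complement, the Davenport error integrates to $o(X^{n^2})$ on the main body for suitable $Y$. In the cusp, although the Haar volume $\prod (r_i s_i)^{i(i-2m)} d^{\times}t$ shrinks, the skewed boxes $hG_0 \cdot \Dab(X)$ become long and thin and can contain many integral points. The decomposition of $\Vij = V_1 \otimes V_2$ under the torus of $\Gij$ forces integral $T$ in deep cusp regions to have vanishing entries in those coordinates where the torus acts with small weight. The crucial step is to show that this vanishing pattern forces $T^2$ to stabilize a maximal isotropic subspace $X \subset V_1$ with $T^2 X \subset X^{\perp}$, that is, $T$ is distinguished in the sense of Proposition \ref{kost}. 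Distinguished orbits are excluded from $N$ by the definition of irreducibility, so the irreducible count coming from the cusp is $o(X^{n^2})$. Carrying out this combinatorial-geometric cusp-reducibility analysis for the two-factor group $\Gij$ and the bilinear representation $V_1 \otimes V_2$, in the spirit of \cite[\S 5]{BS2selm}, is the technical heart of the argument.
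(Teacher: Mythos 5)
Your overall architecture (average over a bounded set $A_0\subset \Gij(\R)$, swap the order of integration, apply Davenport's lemma fiberwise, and kill the cusp) matches the paper's, but there is a genuine gap in the passage from the irreducible count to the full volume. The quantity $N(\Vij(\Z)^{(a,b)},X)$ counts only \emph{irreducible} orbits, whereas $\Vol(\Fu\cdot\D^{(a,b)}(X))$ accounts for all lattice points in the main body, reducible or not. Your proposal handles irreducibility only inside the cusp (where you argue irreducible points are few); you never show that \emph{reducible} points in the \emph{main body} are negligible, which is what lets you replace $\#\{hA_0\cdot\D(X)\cap\Vij(\Z)^{\irr}\}$ by the unweighted count before invoking Davenport. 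This is not automatic: reducible here means distinguished or of discriminant zero, and the distinguished locus is Zariski-dense in $\Vij$ (every fiber of $\pi$ contains distinguished orbits), so it cannot be dismissed as lying on a proper subvariety. The paper handles this with a sieve (Proposition \ref{redorb}): one shows the density of reducible elements in $\Vij(\F_p)$ is bounded away from $1$ uniformly in $p$ (Lemma \ref{redmod}, via counting invariants whose polynomial has at least three irreducible factors, so that there are at least two irreducible $\F_p$-orbits), combines this with the finite-congruence count of Lemma \ref{precongcount}, and lets the product of local densities tend to $0$. Some such argument is indispensable and is absent from your sketch.

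A secondary inaccuracy: in the cusp you assert that the forced vanishing pattern makes $T$ distinguished. In fact the trichotomy is finer. Depending on which set $U$ of coordinates vanishes, either (i) the discriminant vanishes identically (a top-right $i\times(2m+2-i)$ block of zeros, or complementary $i\times j$ and $j\times i$ blocks with $i+j=2m+1$), or (ii) $T$ is $1$- or $2$-distinguished (top-right $m\times(m+1)$ or $(m+1)\times m$ block of zeros), or (iii) neither holds and one must directly bound $\int_{R\times S}\prod_{a_{ij}\in U}w_{ij}^{-1}\,\delta(t)\,\dxt$ against $X^{|U|}$ by manipulating the torus weights. Case (iii) is where the real combinatorial work lives; asserting that cusp points are distinguished does not cover it. You flag the cusp analysis as the technical heart, which is fair, but the mechanism you describe would not by itself close the argument.
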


We will spend most of this section proving Theorem \ref{countint}. By our construction of the fundamental domain, we have 
$$N(S,X) = \frac{1}{\tauab}\#\{\Fu h \cdot \D^{(a,b)}(X) \cap S^{\irr}\}$$
for any $h \in \Gij(\R)$. Let $A_0$ be a bounded open $\Kij$ invariant ball in $\Gij(\R)$. Averaging the above equation over $A_0$ we have: 
\begin{equation}\label{equ}
N(S,X) = \frac{1}{\tauab\Vol(A_0)}\int_{h\in A_0}\#\{\Fu h\cdot \D^{(a,b)}(X) \cap S^{\irr}\}dh .
\end{equation}
We use Equation \ref{equ} to define $N(S,X)$ even if $S$ is not $\Gij(\Z)$-invariant. Using an argument entirely analogous to the proof of \cite[Theorem 2.5]{BS2selm} (Bhargava's averaging technique), we obtain
\begin{equation}\label{equation}
N(S,X) = \frac{1}{\tauab\Vol(A_0)}\int_{h\in \Fu}\#\{ hA_0\cdot \D^{(a,b)}(X) \cap S^{\irr}\}dh
\end{equation}

We now state a result of Davenport \cite{Davenport} which we will use extensively in what follows.
\begin{Proposition}[Davenport]\label{Davlem}
Let $A$ be a bounded, semi-algebraic multiset in $\R^n$ having maximum multiplicity $m$, and that is defined by at most $k$ polynomial inequalities each having degree at most $\ell$. Then, the number of lattice points (counted with multiplicity) contained in the region $A$ is 
$$\Vol(A) + O(max\{\Vol(\overline{A}),1\}),$$
where $\Vol(\overline{A})$ denotes the greatest d-dimensional volume of any projection of $A$ onto a coordinate subspace obtained by equating $n-d$ coordinates to zero, where $d$ takes all values from 1 to $n-1$. The implied constant in the second summand depends only on $n,m,k$ and $\ell$. 
\end{Proposition}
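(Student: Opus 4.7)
The plan is to prove this by induction on the dimension $n$, following Davenport's original argument. The base case $n=1$ is immediate: a bounded semi-algebraic subset of $\R$ defined by at most $k$ polynomial inequalities of degree at most $\ell$ is a union of at most $O_{k,\ell}(1)$ intervals, and on each interval $[a,b]$ the number of integer points equals $(b-a) + O(1) = \Vol + O(1)$. Summing over the intervals gives $\Vol(A) + O(1)$, and the $O(1)$ matches the statement since for $n=1$ there is no projection to speak of (the $\max$ with $1$ absorbs the error).

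For the inductive step from $n-1$ to $n$, I would slice $A$ by the hyperplanes $\{x_n = k\}$ for $k \in \Z$. For each $k$, the slice $A_k := A \cap \{x_n = k\}$ is itself a bounded semi-algebraic multiset in $\R^{n-1}$ with maximum multiplicity at most $m$ and defined by at most $k$ polynomial inequalities in $n-1$ variables of degree at most $\ell$; crucially, the constants governing its complexity depend only on $n,m,k,\ell$. By the inductive hypothesis, the number of lattice points in $A_k$ is $\Vol_{n-1}(A_k) + O(\max\{\Vol_{n-2}(\overline{A_k}),1\})$, where the projection error is of dimension at most $n-2$. Summing over those $k$ for which $A_k$ is nonempty gives the total lattice point count as
\begin{equation*}
\sum_k \Vol_{n-1}(A_k) \; + \; \sum_k O\bigl(\max\{\Vol_{n-2}(\overline{A_k}),1\}\bigr).
\end{equation*}

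The first sum is a Riemann sum for $\int \Vol_{n-1}(A_k)\,dk = \Vol_n(A)$, and since each $\Vol_{n-1}(A_k)$ is bounded by the volume of the projection of $A$ onto the $x_n=0$ hyperplane, the discrepancy between the sum and the integral is itself bounded by a projection volume, which fits inside the allowed error term. For the second sum, the number of nonempty slices is at most the length of the projection of $A$ onto the $x_n$-axis plus $1$, and each slice contributes an error controlled by a lower-dimensional projection; combining, these error terms are all dominated by $\max\{\Vol(\overline{A}),1\}$ in the sense of the statement, since any projection of $A_k$ onto a coordinate subspace of $\R^{n-1}$ is also a projection of $A$ onto a coordinate subspace of $\R^n$ (obtained by additionally killing $x_n$).

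The main obstacle will be the careful bookkeeping in the inductive step: one must verify that the complexity parameters $(n,m,k,\ell)$ of the slices depend only on those of $A$ (so the implied constants stay uniform), and that the telescoping of lower-dimensional projection volumes through the induction really does collapse into a single $\max\{\Vol(\overline{A}),1\}$ error rather than picking up extra factors. The use of semi-algebraicity is essential precisely here, via Tarski--Seidenberg, to guarantee that slicing and projecting preserve the bounded-complexity hypothesis. With these points in hand, the induction closes and the bound is exactly the one stated.
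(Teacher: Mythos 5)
The paper offers no proof of this proposition --- it is quoted verbatim from Davenport's 1951 paper (together with the 1964 corrigendum, which supplies the uniformity of the constant for sets of bounded semi-algebraic complexity) --- so the only meaningful comparison is with Davenport's own argument. Your slicing induction is exactly that argument in outline, and the base case and the observation that the slices $A_k$ have complexity controlled by $(n,m,k,\ell)$ are fine. However, the two steps where the proof does actual quantitative work are both justified incorrectly, and the second, as written, does not yield the stated bound.

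First, the comparison of $\sum_k \Vol_{n-1}(A_k)$ with $\Vol_n(A)=\int \Vol_{n-1}(A_t)\,dt$: knowing that each slice volume is bounded by the volume of the projection onto $\{x_n=0\}$ does not control the difference between a sum over integers and an integral (a bounded function supported near the integers already defeats this). The correct mechanism is Fubini along the $x_n$-fibers: since $A$ is semi-algebraic of bounded complexity, every line parallel to the $x_n$-axis meets $A$ in at most $h=O_{k,\ell}(1)$ intervals, so for each point $x$ of the projection of $A$ onto $\{x_n=0\}$ the number of integers in the fiber over $x$ differs from the fiber's length by at most $2h$; integrating over that projection bounds the discrepancy by $2h$ times its volume. (This bound on the number of intervals per line, not Tarski--Seidenberg, is where semi-algebraicity is really used.) Second, and more seriously, bounding $\sum_k O(\max\{\Vol(\overline{A_k}),1\})$ by (number of nonempty slices) times (maximum per-slice error) is too lossy: that product is a $1$-dimensional projection volume of $A$ times a projection volume of a single slice, and such a product need not be dominated by any projection volume of $A$. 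For instance, for $A\subset\R^3$ equal to $([0,T]\times[0,1/T]\times[0,1])\cup([0,1/T]^2\times[0,T])$ your bound gives $\approx T\cdot T=T^2$, while every coordinate projection of $A$ has volume $O(T)$. The repair is to \emph{sum} the per-slice projection volumes: for each coordinate subspace $S\subset\{x_n=0\}$ one compares $\sum_k \Vol(\pi_S(A_k))$ to $\Vol(\pi_{S\oplus\R e_n}(A))$ by the same fiber argument as above applied to the projected set, picking up an additional term $O(h\,\Vol(\pi_S(A)))$. This is what makes the telescoping close, at the cost of powers of $h$ that are absorbed into the constant depending on $n,m,k,\ell$. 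With these two corrections your induction is Davenport's proof.
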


Here is a sketch of how we prove Theorem \ref{countint}. We divide $\Fu$ (the region of integration in Equation \eqref{equation}) into two parts: the main body, and the cuspidal region. We will prove that the integral of $\#\{\Fu h\D(X) \cap S^{\red}\}$ over the main body is $o(X^{n^2})$ (Proposition \ref{redorb}), and that the integral of $\#\{\Fu h\D(X) \cap S^{\irr}\}$ over the cuspidal region is $o(X^{n^2})$ (Proposition \ref{cutcusp}). The result will then follow from an application of Proposition \ref{Davlem}. 
\subsubsection{The number of irreducible points in the cusp is negligible}
Recall that we have fixed bases for $V_1$ and $V_2$ with respect to which the bilinear forms $B_i$ have gram matricies $\pm B$ (Equation \eqref{stdform} in \S2). 

We pick a set of coordinates on $\Vij$ as follows. An element $T$ corresponds to 
$$T= \left[
\begin{array}{c|c}
0&A \\ \hline
A^{*}&0
\end{array}
\right]
$$
where $A$ is some $n \times n$ matrix, and $A^{*} = (-BAB^{-1})^{\textrm{t}}$ (where the superscript refers to taking the transpose). The matrix $A^{*}$ is the unique matrix which makes $T$ self-adjoint. To pick coordinates on $\Vij$, it suffices to pick coordinates on the set of all matricies $A$ which we do as follows:
$$\left[
\begin{array}{cccc}
a_{m \hspace{2pt} \text{-}m}&a_{m \hspace{2pt} 1\text{-}m}&\hdots&a_{\hspace{1pt}mm}\\
a_{m\text{-}1 \hspace{2pt} \text{-}m}&a_{m\text{-}1 \hspace{2.5pt} 1\text{-}m}&\hdots&a_{m\text{-}1 \hspace{1.5pt}m}\\
\vdots&&\ddots&\vdots\\
a_{\text{-}m \hspace{2pt}\text{-}m}&a_{\text{-}m \hspace{2pt} 1\text{-}m}&\hdots&a_{\text{-}m \hspace{1.5pt} m}
\end{array}
\right].
$$
The $a_{ij}$ are scaled by the action of the torus $R \times S$. Define $w_{ij}$ to be the weight according to which $R \times S$ scales $a_{ij}$. For instance, if $i$ and $j$ are positive, $w_{ij} = r_1^{\text{-}1}\hdots r_i^{\text{-}1}s_1^{\text{-}1}\hdots s_j^{\text{-}1}$, and $w_{\text{-}i \hspace{2pt} \text{-}j} = w_{ij}^{-1}$. We define a partial order $\preceq$ on the set of variables: $a_{ij} \preceq a_{i'j'}$ if $i'\le i$ and $j' \le j$. Further, $a_{ij} \preceq a_{i'j'}$  precisely when $\frac{w_{ij}}{w_{i'j'}}$ consists of non-positive powers of $r$ and $s$. With respect to this order, $a_{mm}$ is the unique minimal element. 

We now prove some results on reducibility of elements in $\Vij(\Z)$, which we will need to prove Proposition \ref{cutcusp}. 
\begin{Lemma}\label{red1}
 If for some $i$ the top-right $i \times (2m+2 -i)$ block of $A$ is identically zero, then the corresponding $T$ has discriminant zero.
\end{Lemma}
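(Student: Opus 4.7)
The strategy is a straightforward pigeonhole on dimensions: the hypothesized block-vanishing forces $A$ to be singular, hence $\det T = 0$, hence $e = 0$, and this in turn forces $g(x) = f(x^2)$ to have a double root at $0$.

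Concretely, fix the bases for $V_1$ and $V_2$ in which the entries $a_{pq}$ of $A$ index pairs of basis vectors. Let $Y$ denote the span of the basis vectors indexing the rightmost $2m+2-i$ columns of $A$, a subspace of dimension $2m+2-i$, and let $Z$ denote the span of the basis vectors indexing the bottom $n-i = 2m+1-i$ rows, a subspace of dimension $2m+1-i$. The vanishing of the top-right $i\times(2m+2-i)$ block says exactly that $A(Y)\subseteq Z$. Since $\dim Y > \dim Z$, the map $A|_Y$ has a nontrivial kernel, and hence $\ker A \neq 0$, i.e.\ $\det A = 0$. This is the only genuine step in the argument; it uses nothing but the inequality $i + (2m+2-i) = 2m+2 > n$.

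The rest is routine bookkeeping. From the block form $T = \left[\begin{smallmatrix}0 & A\\ A^* & 0\end{smallmatrix}\right]$ and the relation $A^* = -B A^t B^{-1}$ with $\det B = \pm 1$, one has $\det T = \pm(\det A)(\det A^*) = \pm(\det A)^2$, so $\det T = 0$. Since the characteristic polynomial of $T$ is $g(x) = f(x^2) = x^{2n} + a_1 x^{2n-2} + \cdots + a_{n-1} x^2 + e^2$, its constant term is $\pm\det T = e^2$, forcing $e = 0$. Then $g(x) = x^2\bigl(x^{2n-2} + a_1 x^{2n-4} + \cdots + a_{n-1}\bigr)$ has $0$ as a root of multiplicity at least two, so $\Disc(g) = 0$; equivalently, $T$ fails to be regular semisimple, which is precisely the assertion that $T$ has discriminant zero.
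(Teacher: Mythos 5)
Your proof is correct and follows essentially the same route as the paper's: the zero block of total dimension $2m+2 > n$ forces $\det A = 0$, hence $0$ is an eigenvalue of $T$ of multiplicity at least two (equivalently $e=0$), so $g(x)=f(x^2)$ has a repeated root and $T$ is not regular semisimple. You simply spell out the dimension count and the determinant bookkeeping that the paper leaves implicit.
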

\begin{proof}
If such a block is identically zero, then the determinant of $A$ is zero. Therefore, the operator $T$ has the eigenvalue $0$ with multiplicity two, rendering it impossible for $T$ to be regular semisimple. 
\end{proof}

\begin{Lemma}\label{red2}
If the top-right $m \times (m+1)$ or $(m+1) \times m$ block of $A$ is zero, then $T$ is reducible.
\end{Lemma}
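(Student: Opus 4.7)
The plan is to prove reducibility by exhibiting a maximal isotropic subspace showing that $T$ is $1$-distinguished (in case 1) or $2$-distinguished (in case 2); either way $T$ is not irreducible by definition, regardless of whether $\pi(T)$ is regular semisimple. I will work in the bases $\{f_i\}$, $\{f'_i\}$ of \S 2, with $Q_1(f_i,f_j)=\delta_{i+j,0}$ and $Q_2(f'_i,f'_j)=-\delta_{i+j,0}$ under the indexing $i,j\in\{-m,\ldots,m\}$ that matches the entries of $A$. In $V_1$ the relevant maximal isotropic will be $X_1=\langle f_{-m},\ldots,f_{-1}\rangle$, with perp $Y_1:=X_1^{\perp}=\langle f_{-m},\ldots,f_0\rangle$; in $V_2$ it will be $X_2=\langle f'_1,\ldots,f'_m\rangle$, with perp $W_2:=X_2^{\perp}=\langle f'_0,\ldots,f'_m\rangle$.

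The main computational tool will be the coefficient formula $c_i=-Q_1(Af'_{-i},v)$ for the $f'_i$-component of $A^{*}v$, obtained by unwinding the adjointness relation $Q_1(Aw,v)=Q_2(w,A^{*}v)$ against the antidiagonal forms. Self-adjointness of $T$ forces $T^2|_{V_1}=AA^{*}$ and $T^2|_{V_2}=A^{*}A$, so establishing that $T$ is distinguished reduces to checking either $AA^{*}(X_1)\subset Y_1$ or $A^{*}A(X_2)\subset W_2$.

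In case 1 the hypothesis reads $a_{ij}=0$ for $i\ge 1$ and $j\ge 0$, which says $A(W_2)\subset Y_1$. I will take $X=X_1$ and proceed in two steps. First, for $v\in X_1$ and $i\le -1$, the hypothesis applied at column index $-i\ge 1$ places $Af'_{-i}$ in $Y_1$; but the restriction of $Q_1$ to $Y_1\times Y_1$ is supported only on the $(f_0,f_0)$ pairing, and since $v\in X_1$ has no $f_0$-component we obtain $c_i=0$, i.e.\ $A^{*}v\in W_2$. A second application of the hypothesis then gives $AA^{*}v\in A(W_2)\subset Y_1=X_1^{\perp}$, so $T$ is $1$-distinguished.

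Case 2 is parallel: the hypothesis reads $a_{ij}=0$ for $i\ge 0$ and $j\ge 1$, equivalently $A(X_2)\subset X_1$. Taking $X=X_2$, the same coefficient formula combined with $Af'_{-i}\in X_1$ whenever $-i\ge 1$ and the isotropy $Q_1|_{X_1\times X_1}=0$ shows $A^{*}(X_1)\subset W_2$; hence $A^{*}A(X_2)\subset A^{*}(X_1)\subset W_2=X_2^{\perp}$, so $T$ is $2$-distinguished. The only obstacle I anticipate is bookkeeping with the two index conventions (rows indexed $m,\ldots,-m$ top-to-bottom versus columns $-m,\ldots,m$ left-to-right) and the correct identification of perps; once this is set up, each case reduces to the same two-step pattern of combining the block-zero hypothesis with adjointness.
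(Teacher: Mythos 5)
Your proof is correct and takes essentially the same route as the paper: both arguments show that the block-vanishing hypothesis forces $T$ to be $1$-distinguished (resp.\ $2$-distinguished) by exhibiting a coordinate maximal isotropic subspace $X$ with $T^2X = AA^*X \subset X^{\perp}$ (resp.\ $A^*AX\subset X^\perp$), whence reducibility by definition. The only difference is that you carry out explicitly, via the adjointness relation, what the paper dismisses as ``an easy computation shows that the top-right $m\times m$ block of $AA^*$ will also be zero.''
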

\begin{proof}
We will show that in the first case, the orbit will be $1$-distinguished, and in the secod case, the orbit will be $2$-distinguished. Indeed, it is easy to check that 
$$T^2= \left[
\begin{array}{c|c}
AA^*&0 \\ \hline
0&A^*A
\end{array}
\right]
$$

 If the top-right $m \times (m+1)$ block of $A$ is identically zero, then an easy computation shows that the top-right $m \times m$ block of $AA^*$ will also be zero. Consequently, the isotropic space $X$ spanned by the last $m$ basis vectors of $V_1$ has the property that $T^2 X \subset X^{\perp}$, and so the orbit is $1$-distinguished. The same proof (except that we use $A^*A$ instead) shows that in the second case, the orbit would be $2$-distinguished. Therefore, $T$ is reducible. 
\end{proof}

\begin{Lemma}\label{red3}
Suppose that $i + j = 2m+1$. If the top-right $i \times j$  and $j \times i$ blocks of $A$ are zero, then $T$ has discriminant zero.
\end{Lemma}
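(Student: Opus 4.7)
My approach is to translate the two block-vanishing hypotheses into the statement that $T^2|_{V_1}$ preserves a nested pair of subspaces $Y_1 \subset Y_2 = Y_1^{\perp}$ of $V_1$, and then to use self-adjointness of $T^2|_{V_1}$ with respect to $Q_1$ to force a square factor in the characteristic polynomial $f(x)$ of $T^2|_{V_1}$. More concretely: viewing $A$ as a linear map $V_2 \to V_1$, let $X_1 \subset V_2$ (resp.\ $X_2 \subset V_2$) be the span of the basis vectors indexing the rightmost $j$ (resp.\ $i$) columns of $A$, and let $Y_2 \subset V_1$ (resp.\ $Y_1 \subset V_1$) be the span of the basis vectors indexing the bottom $j$ (resp.\ $i$) rows of $A$. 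Vanishing of the top-right $i \times j$ and $j \times i$ blocks is then equivalent to $A(X_1) \subset Y_2$ and $A(X_2) \subset Y_1$. Because the Gram matrices of $Q_1, Q_2$ are the anti-diagonal matrices $\pm B$ and $i + j = 2m+1$, a direct check gives $X_2 = X_1^{\perp}$ in $V_2$ and $Y_1 = Y_2^{\perp}$ in $V_1$. Without loss of generality $i \leq j$, so $X_2 \subset X_1$ and $Y_1 \subset Y_2$.

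Next I would dualize via self-adjointness of $T$. The identity $\langle A w, v \rangle_{V_1} = \langle w, A^{*} v \rangle_{V_2}$ together with $A(X_1) \subset Y_1^{\perp}$ and $A(X_2) \subset Y_2^{\perp}$ yields $A^{*}(Y_1) \subset X_1^{\perp} = X_2$ and $A^{*}(Y_2) \subset X_2^{\perp} = X_1$. Therefore $T^2|_{V_1} = AA^{*}$ sends $Y_1$ into $A(X_2) \subset Y_1$ and $Y_2$ into $A(X_1) \subset Y_2$, producing a $T^2$-stable filtration $0 \subset Y_1 \subset Y_2 \subset V_1$.

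Finally I would extract a square factor. Since $AA^{*}$ and $A^{*}A$ share the same characteristic polynomial and $\operatorname{char}(T^2) = f(x)^2$, we have $\operatorname{char}(T^2|_{V_1}) = f(x)$. The filtration yields
\[
f(x) = \operatorname{char}(T^2|_{Y_1}) \cdot \operatorname{char}(T^2|_{Y_2/Y_1}) \cdot \operatorname{char}(T^2|_{V_1/Y_2}).
\]
Because $Y_2 = Y_1^{\perp}$, the bilinear form $Q_1$ induces a perfect pairing $Y_1 \times V_1/Y_2 \to k$, and self-adjointness of $T^2|_{V_1}$ ensures this pairing intertwines $T^2|_{Y_1}$ with the transpose of the induced action on $V_1/Y_2$; the two operators therefore have the same characteristic polynomial. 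Hence $f(x)$ has a square factor of degree at least $2\min(i,j) \geq 2$, forcing $\Disc(f) = 0$ and consequently $\Disc(f(x^2)) = 0$, i.e., $T$ has discriminant zero. The main subtlety is in the first step, where the orthogonality identities $X_2 = X_1^{\perp}$ and $Y_1 = Y_2^{\perp}$ decisively use the hypothesis $i + j = 2m+1$; the duality argument in the last step is a standard consequence of self-adjointness.
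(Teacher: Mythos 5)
Your proof is correct and is essentially the paper's argument in coordinate-free form: the paper writes $Y=AA^{*}$ as a block lower-triangular matrix with diagonal blocks $Y_{11},Y_{22},Y_{33}$ (your filtration $0\subset Y_1\subset Y_2\subset V_1$) and uses self-adjointness of $Y$ with respect to $B_1$ to conclude $P_{Y}=P_{Y_{22}}P_{Y_{11}}^{2}$, exactly your perfect-pairing step identifying the characteristic polynomials of $T^2|_{Y_1}$ and $T^2|_{V_1/Y_2}$. Both arguments then conclude that $f$ has a repeated root, so the discriminant vanishes.
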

\begin{proof}
Without any loss of generality, we assume that $i < j$. If $A$ has the property that the top-right blocks of size $i \times j$ and $j \times i$ are identically zero, then so does $A^*$, and therefore so do $AA^*$ and $A^*A$. We will show that $AA^*$ (and similarly, $A^*A$) has repeated eigenvalues. For brevity, let $Y$ denote the matrix $AA^*$. It will be of the form 
$$Y= \left[
\begin{array}{c|c|c}
Y_{11}&\textbf{0}&\textbf{0} \\ \hline
*&Y_{22}&\textbf{0} \\ \hline
*&*&Y_{33}
\end{array}
\right]
$$
where $Y_{11}$ and $Y_{33}$ are of size $i \times i$ and $Y_{22}$ is of size $(j-i)\times (j-i)$. The determinant of $Y$ equals the product of the determinants of $Y_{11}$, $Y_{22}$ and $Y_{33}$. Note that the determinant of $Y_{11}$ equals the determinant of $Y_{33}$. This is so because $Y$ is self-adjoint for the bilinear form on $V_1$. The same conclusions hold with the characteristic polynomial replacing the determinant, because the same argument applies to $Y - xI$. Let $P_M$ denote the characteristic polynomial of any square matrix $M$. We have shown that $P_Y = P_{Y_{22}}P_{Y_{11}}^2$, and so $AA^* = Y$ must have repeated eigenvalues, as claimed. Therefore, $T$ has discriminant zero and is reducible. 
\end{proof}

Define the cusp, or cuspidal region to be the set of all elements of $\Vij(\R)$ such that $|a_{mm}| <1$, and define the main body to be the complement of the cuspidal region. We say that $h \in \Fu$ is cuspidal if $hA_0\D^{(a,b)}(X)$ lies fully in in the cusp. Clearly, an integral element will lie in the cusp only if $a_{mm} =0$. We have:
\begin{Proposition}\label{cutcusp}
$$\int_{h\in \Fu}\#\{ hA_0\D^{(a,b)}(X) \cap \Vij(\Z)_{a_{mm}=0}^{\irr}\}dh = o(X^{n^2})$$
\end{Proposition}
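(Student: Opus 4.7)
The strategy is the standard Bhargava cusp cutoff combined with Davenport's Proposition \ref{Davlem}. Starting from Equation \eqref{equation}, I would stratify the integer points of the cusp by the set $U$ of coordinates $\{a_{ij}\}$ on which they vanish. Because we are in the cusp, every such $U$ contains $(m,m)$, the unique $\preceq$-minimal variable. For each fixed $U$, Davenport's lemma bounds the number of integer points in the $(n^2-|U|)$-dimensional slice $hA_0\cdot\D^{(a,b)}(X)\cap\{a_{ij}=0,\ (i,j)\in U\}$ by its volume plus volumes of further coordinate projections. Since the torus $R\times S$ scales $a_{ij}$ by $w_{ij}(h)$ and $A_0$, $N'(t)$, $K$, and the unit-height slice of $\Dabs$ are all bounded, this slice has volume at most a constant times
\begin{equation*}
X^{n^2-|U|}\prod_{(i,j)\notin U}w_{ij}(h).
\end{equation*}

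Lemmas \ref{red1}, \ref{red2}, and \ref{red3} now become the essential pruning device. A down-set $U$ whose complement misses a top-right $i\times(2m+2-i)$ block of zeros (Lemma \ref{red1}), or a top-right $m\times(m+1)$ or $(m+1)\times m$ block (Lemma \ref{red2}), or a pair of top-right $i\times j$ and $j\times i$ blocks with $i+j=2m+1$ (Lemma \ref{red3}), yields only reducible or non-regular-semisimple elements and hence contributes nothing to the irreducible count. For the remaining admissible $U$, I would verify
\begin{equation*}
\int_{\Fu}X^{n^2-|U|}\prod_{(i,j)\notin U}w_{ij}(h)\,dh=o(X^{n^2})
\end{equation*}
by substituting $dh=\prod_{i=1}^{m}(r_is_i)^{i^2-2im}\,du\,d^{\times}t\,d\theta$ and reducing to convergence of an explicit integral of monomials in $r_i,s_j$ over the Siegel set $R'\times S'$. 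The forced non-vanishing of at least one $a_{ij}$ in each top-right block dictated by the reducibility lemmas supplies enough negative powers of $r_i$ and $s_j$ to beat the Haar factor, and the Davenport boundary contributions are estimated identically after dropping one additional coordinate.

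The main obstacle is the combinatorial case analysis: enumerating the admissible down-sets $U$ and carrying out the exponent bookkeeping for the torus integral. The overall shape of this analysis parallels \cite[\S 4]{BS2selm} and \cite[\S 4.3]{SW}, but the new ingredient here is that the product group $\Gij=\SO(V_1)\times\SO(V_2)$ acts on $\Vij$ via the block structure $T=\left(\begin{smallmatrix}0&A\\A^*&0\end{smallmatrix}\right)$, so vanishing blocks of $A$ only yield reducibility of $T$ through $T^2=\mathrm{diag}(AA^*,A^*A)$. Correctly combining these block constraints with the weights $w_{ij}$ governing both tori $R$ and $S$ simultaneously is where the verification is delicate.
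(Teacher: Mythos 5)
Your plan is the paper's own argument: reduce to the torus integral, stratify the integral cusp points by the down-set $U$ of vanishing coordinates, discard those $U$ forced to consist of reducible or non-regular-semisimple elements by Lemmas \ref{red1}--\ref{red3}, bound each remaining stratum by the volume of its coordinate projection via Proposition \ref{Davlem}, and then verify that the resulting torus integral is $o(X^{|U|})$. The only ingredient your sketch leaves implicit is the mechanism that makes that last verification go through: for each $a_{ij}\notin U$ one has $Xw_{ij}(t)\ge 1$ on the support of the integrand (otherwise the stratum contains no integer points with $a_{ij}\neq 0$), and it is these inequalities --- not the negative weights alone --- that allow one to trade powers of $X$ for the nonnegative exponents of $r_m$ and $s_m$ that actually occur; carrying out that exponent bookkeeping for every admissible $U$ is where essentially all of the paper's proof lives.
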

\begin{proof}
To lighten notation, we drop the superscript $^{(a,b)}$ while proving this result. The strategy is to use Proposition \ref{Davlem} to replace the number of integral points with a volume. The same argument as in \cite[Proposition 4.5]{SW}, shows that it suffices to prove 
$$\int_{t \in R \times S}\#\{ tA_0\D(X) \cap \Vij(\Z)_{a_{mm}=0}^{\irr}\}\delta(t) \dxt = o(X^{n^2 }).$$
If $t = (r,s) \in R \times S$ has the property that $tA_0\D(X) \subset \Vij(\R)_{|a_{ij}|<1}$, then $tA_0\D(X) \subset \Vij(\R)_{|a_{i'j'}|<0}$ for all $a_{i'j'} \preceq a_{ij}$. This is true because $\frac{w_{i'j'}}{w_{ij}}$ consists of non-positive powers of $r$ and $s$, and so $t$ would act with a higher negative weight on $a_{i'j'}$. 

Let $U$ denote a subset of the coordinates $a_{ij}$, with the property that if $a_{i_0j_0} \in U$ then $U$ also contains $a_{ij}$ with $a_{ij} \preceq a_{i_0j_0}$, i.e. $U$ contains all variables $a_{ij}$ to the top-right of $a_{i_0 j_0}$. Define $\Vij(U)$ to be the subspace of $\Vij$ given by $a_{ij} = 0$, $a_{ij}\in U$. Define $\Vij(U)(\Z)^{\irr}_0$ to be $\alpha \in \Vij(U)(\Z)^{\irr}$ such that $a_{ij} \neq 0$ for $a_{ij} \notin U$. It suffices to prove that
$$\int_{t\in R \times S}\#\{ tA_0\D(X) \cap \Vij(U)(\Z)^{\irr}_0\}\delta(t)\dxt = o(X^{n^2})$$
By Lemmas \ref{red1} and \ref{red2}, if $U$ contains $a_{ij}$ with $i+j \le 0$ or with $\{i,j\} = \{0,1\}$, then every element of $\Vij(U)(\Z)$ is reducible. Similarly, by Lemma \ref{red3}, if $U$ contains $a_{ij}$ and $a_{ji}$ for some pair $(i,j)$ such that $i + j =1$, then every element of $\Vij(U)(\Z)$ is reducible. We may assume that $U$ doesn't contain any such $a_{ij}$. We now prove two results which we will need to finish this proof.
\begin{Claim}\label{claim2}
For any $w_{ij} \notin U$, we may assume that $Xw_{ij}(t) \geq 1$. 
\end{Claim}
\begin{proof}
If $Xw_{ij}(t) <1$, then $a_{ij}(\alpha) = 0$ for $\alpha \in \Vij(U)(\Z)$, and so $\alpha \notin \Vij(U)(\Z)_0$. 
\end{proof}
It is an easy consequence of Claim \ref{claim2} that $Xs_m^{-1} > 1$ and $Xr_m^{-1}>1$, and for all $k$, $X^2s^{-1}_k >1$ and $X^2r^{-1}_k >1$.

\begin{Claim}\label{claim1}
We have 
$$\#\{ tA_0\D(X) \cap \Vij(U)(\Z)^{\irr}_0\} = O\Big(\displaystyle{\prod_{a_{ij} \in U}w_{ij}^{-1}(t)X^{n^2 -|U|}}\Big).$$
\end{Claim} 
\begin{proof}
By Proposition \ref{Davlem}, 
$$\#\{ tA_0\D(X) \cap \Vij(U)(\Z)^{\irr}_0\} = O(\Vol) + E, $$
where $\Vol$ is the volume of the projection of $tA_0\D(X)$ onto $\Vij(U)$, and $E$ is error term mentioned in Proposition \ref{Davlem}. If the projection of $tA_0\D(X)$ onto some line spanned by $a_{ij}$, for some $a_{ij} \notin U$ has volume less than $1$, then $ tA_0\D(X) \cap \Vij(U)(\Z)^{\irr}_0 = \emptyset$. Therefore, the volume of the projection of $tA_0\D(X)$ onto any coordinate subspace of $\Vij(U)$ is bounded by $\Vol$. We therefore have the bound
$$\#\{ tA_0\D(X) \cap \Vij(U)(\Z)^{\irr}_0\} = O(\Vol).$$
The claim follows from the fact that $\Vol =  O\Big(\displaystyle{\prod_{a_{ij} \in U}w_{ij}^{-1}(t)X^{n^2 -|U|}}\Big)$.
\end{proof}

By Claim \ref{claim1}, we have
$$
\begin{array}{rcl}
&&\displaystyle{\int_{t\in R \times S}\#\{ tA_0\D(X) \cap \Vij(U)(\Z)_0^{\irr}\}\delta(t)\dxt} \\
&=& \displaystyle{O\bigg(X^{n^2-|U|}\int_{t\in R \times S} \prod_{a_{ij} \in U}w_{ij}^{-1}\prod (r_is_i)^{i^2-2im} \dxt\bigg)}.
\end{array}
$$
It remains to prove that $\displaystyle{\int_{t\in R \times S} \prod_{a_{ij} \in U}w_{ij}^{-1}\prod (r_is_i)^{i^2-2im} \dxt} = o\big(X^{|U|}\big)$. Suppose first, that $U$ is contained in the top-right $m \times m$ block. We then have  $\prod_{i,j>0}w_{ij}^{-1} = \prod(r_is_i)^{im}$. If $U$ is strictly contained in the top-right $m \times m$ block, all the exponents in the $r_i$ and $s_i$ (in the integrand) are strictly negative, and so the integral is bounded by $O(1)$. If $U$ equals the top-right $m\times m$ block, the exponents of $r_m$ and $s_m$ are zero, and the other exponents are strictly negative. By Claim \ref{claim2}, $Xw_{10} \geq 1$ and $Xw_{01} \geq 1$. We make the exponents of $r_m$ and $s_m$ negative as well, by multiplying the entire expression by $X^2w_{10}w_{01}$, thus bounding the integral by $o(X^2)$. 

Let us therefore assume that $U$ is not contained in the top-right $m\times m$ block. It follows that $U$ either contains a variable of the form $a_{mj}$ with $j \leq 0$ of $a_{im}$, $i \leq 0$. We will induct on $m$ to prove the proposition. We deal with the problem in two cases. Case 1 will be when $U$ contains variables of both forms, i.e. $a_{mj}$ and $a_{im}$ with $i,j \leq 0$, and Case 2 is when $U$ contains variables of only one of the two kinds. The proof of the first case is strictly harder than the second case, so we will be content with simply proving the first case. 

\paragraph{Case 1}
Let the topmost row $U_R$ of $U$ have size $1 \times (m+b_s)$, and the rightmost column $U_C$ have size $(m+b_r) \times 1$, with $b_s \geq b_r$. As mentioned above, using Lemma \ref{red3} we assume that, $b_r < m$. In order to apply induction, we have to show that 
$$\delta_m(r,s) \delta_{m-1}(r,s)^{-1}\displaystyle{\prod_{w \in U_R \cup U_C} w^{-1}(r,s)} = o(X^{2m + b_s + b_r -1})$$
For ease of notation, let $a = b_s - b_r$, $k_s = m-b_s$ and $k_r = m-b_r$. The left hand side of the above equation is going to be a product of a term $s_u$ consisting of the parameters $s_k$ and a term $r_u$ consisting of the parameters $r_k$. 

A calculation shows that 
$$\displaystyle{s_u = s_m^{1-a}\hdots s_{k_s+1}^{1-a}s_{k_s}^{-a} \hdots s_1^{k_s -1 +1}}$$
and 
$$r_u =r_m^{1+a} \hdots r_{k_s + a + 1}^{1+a}r_{k_s +a}^a \hdots r_{k_s+1} r_{k_s}^0 \hdots r_1^{-k_s + 1}.$$
 Recall that $Xw >1$ for any $w \notin U$. Let $w_0 = (r_1 \hdots r_m)^{-1}(s_{k_s+1} \hdots s_m)$, be the weight of the variable $a_{m \hspace{.03in}\text{-}b_s}$. We have that $w_0^a s_u r_u$ equals $s_ms_{m-1}\hdots s_{k_s+1}r_m \hdots r_{k_s+a+1}$ multiplied by non-positive powers of the $s_k$ and $r_k$. Recall that we have $X^2s_k^{-1} >1$ and $X^2r_k >1$. Further, $Xr_m^{-1} >1$ and $Xs_m^{-1}>1$. 

Multiplying $s_ur_u$ by $X^{a + 2b_s + 2(b_s -a) -2}w_0^a (s_m \hdots s_{k_s +2})^{-1}(r_m \hdots r_{k_s + a +1})^{-1}$ gives us a product of non-positive powers of the $r_k$ and $s_k$. Further, it is easy to see that the exponent of $X$ is at most $2m + b_s + b_r -2$. Finally, we again use that $X^4r_k^{-1}s_k^{-1} <1 $ to multiply by a small power of $X$ to make the exponents of $r_k$ and $s_k$ negative. This concludes the first case. 

As we mentioned above, the proof of the second case is simpler and runs along the same lines. We have thus proved Proposition \ref{cutcusp}.
\end{proof}

\subsubsection{Proof of Theorem \ref{countint}}
We largely follow the exposition in \cite[Theorem 4.9]{SW}. Let $\Fu' \subset \Fu$ be the set of cuspidal elements of $\Fu$. By Proposition \ref{cutcusp}, we have 
$$
\begin{array}{rcl}
&&N(\Vij(\Z)^{(a,b)},X)\\
&=&\displaystyle{\frac{1}{\tauab\Vol(A_0)}\int_{h\in \Fu}\#\{ hA_0\D^{(a,b)}(X) \cap \Vij(\Z)^{\irr}\}dh}\\
&=& \displaystyle{\frac{1}{\tauab\Vol(A_0)}\int_{h\in \Fu \setminus \Fu'}\#\{ hA_0\D^{(a,b)}(X) \cap \Vij(\Z)^{\irr}\}dh}\hspace{3pt} + o(X^{n^2})
\end{array}
$$
By Proposition \ref{redorb}, we may replace $\Vij^{\irr}$ with $\Vij$. We use Proposition \ref{Davlem} to approximate $\#\{ hA_0\D^{(a,b)}(X) \cap \Vij(\Z)\}$.

By construction of $\mathcal{F}'$, the length of the projection of $hA_o\mathcal{D}(X)$ onto the coordinate $a_{mm}$ for all $h$ in $\mathcal{F} \setminus \mathcal{F}'$ is at least $1$.  Further, the weight of $a_{mm}$ being minimal, the volume of all smaller dimensional projections of $hA_o\mathcal{D}(X)$ are bounded by the volume of the projection onto the hyperplane $a_{mm} = 0$. Therefore, $N(\Vij(\Z)^{(a,b)},X)$ equals
$$ \displaystyle{\frac{1}{\tauab\Vol(A_0)}\int_{h\in \Fu \setminus \Fu'}\bigg[\Vol(hA_0\D^{(a,b)}(X)) + O\bigg(\frac{\Vol(hA_0\D^{(a,b)}(X))}{Xw_{mm}}\bigg)\bigg] dh}  + o(X^{n^2})$$
We have that $\displaystyle{\int_{h \in \Fu \setminus \Fu'}\frac{1}{w_{mm}}dh}$ is bounded by $O(1)$. Further, by the same argument used in \cite{SW}, the volume of the cuspidal region $\mathcal{F}'$ is also bounded by $o(1)$. Therefore, we have  
$$
\begin{array}{ccc}
N(\Vij(\Z)^{(a,b)},X) &= &\displaystyle{\frac{1}{\tauab\Vol(A_0)}\int_{h\in \Fu}\Vol(hA_0\D^{(a,b)}(X))dh} +o(X^{n^2})\\
& = &\displaystyle{\frac{1}{\tauab\Vol(A_0)}\int_{h\in A_o}\Vol(\mathcal{F}h\D^{(a,b)}(X))dh} +o(X^{n^2})
\end{array}
$$
The set $\mathcal{F}h\D^{(a,b)}(X)$ does not depend on $h$, and so the integrand equals $\Vol(\mathcal{F}\D^{(a,b)}(X))$. Substituting this in the final equality, we see that 
$$\displaystyle{N(\Vij(\Z)^{(a,b)},X) = \frac{1}{\tauab}\Vol(\Fu \D^{(a,b)}(X)) + o(X^{n^2})}$$ 
as required. 

\subsection{Congruence conditions}
Let $\Lat \subset \Vij(\Z)$ be a subset defined by congruence conditions modulo finitely many prime powers. We want to count irreducible $\Gij(\Z)$-orbits in $\Lat$, and the main result in this subsection is: 
\begin{Theorem}\label{congcount}
We have 
$$\displaystyle{N(\Lat^{(a,b)},X) =  N(\Vij(\Z)^{(a.b)},X) \prod_p \mu_p(\mathcal{L})+ o(X^{n^2})}$$
where $\Lat^{(a,b)} = \Lat \cap \Vij(\Z)^{(a,b)}$, and $\mu_p$ is the $p$-adic density of $\mathcal{L}$ in $\Vij(\Z)$.
\end{Theorem}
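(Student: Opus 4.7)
The plan is to repeat the argument of Theorem \ref{countint} coset by coset. Since $\Lat$ is defined by congruence conditions modulo finitely many prime powers, let $M$ be the product of those prime powers, so that $\Lat$ is a disjoint union of finitely many translates $\gamma_j + M\Vij(\Z) \subset \Vij(\Z)$. The $p$-adic density is then
\[
\prod_p \mu_p(\Lat) \;=\; \frac{\#\{\gamma_j\}}{M^{n^2}}.
\]
The averaging identity (Equation \eqref{equation}) immediately generalises: for any $\Gij(\Z)$-invariant $S \subset \Vij(\Z)$ one has
\[
N(S,X) \;=\; \frac{1}{\tauab \Vol(A_0)} \int_{h\in\Fu} \#\{ hA_0 \cdot \Dab(X) \cap S^{\irr}\} \, dh,
\]
and since each coset $\gamma_j + M\Vij(\Z)$ is $\Gij(\Z)$-invariant (as $\Gij(\Z)$ acts on $\Vij(\Z)$ preserving reduction modulo $M$, provided we enlarge $M$ to a congruence subgroup issue — which we avoid by decomposing $\Lat$ into orbits of $\Gij(\Z/M\Z)$ if needed), we may apply the identity to $S = \Lat^{(a,b)}$ and split the integrand into a sum over cosets $\gamma_j$.

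For each coset, I would invoke Davenport's Proposition \ref{Davlem} to approximate $\#\{ hA_0 \cdot \Dab(X) \cap (\gamma_j + M\Vij(\Z))\}$ by $M^{-n^2}\Vol(hA_0 \Dab(X))$ plus a lower-dimensional error. The error terms match those in the proof of Theorem \ref{countint} since passing to a sublattice only refines the point count; more precisely the volumes of projections of $hA_0 \Dab(X)$ onto coordinate subspaces of $\Vij(U)$ are unchanged, and the leading volume is simply scaled by $1/M^{n^2}$. Summing over the $\#\{\gamma_j\}$ translates in $\Lat$ produces the factor $\#\{\gamma_j\}/M^{n^2} = \prod_p \mu_p(\Lat)$ in front of the main term.

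The cuspidal analysis (Proposition \ref{cutcusp}) carries over verbatim: the bound $o(X^{n^2})$ for the number of irreducible integral points in $hA_0\Dab(X)$ with $a_{mm}=0$ only improves when we intersect with the sublattice $M\Vij(\Z)$, because all the variables $a_{ij}\not\in U$ must additionally be divisible by $M$, tightening the estimate in Claim \ref{claim2}. Similarly, the volume estimates in the main body, and the fact that $\int_{\Fu\setminus\Fu'} w_{mm}^{-1}\,dh = O(1)$, are unaffected by the congruence condition. Combining these ingredients gives
\[
N(\Lat^{(a,b)},X) \;=\; \frac{\prod_p \mu_p(\Lat)}{\tauab}\Vol(\Fu\cdot \Dab(X)) + o(X^{n^2}) \;=\; N(\Vij(\Z)^{(a,b)},X)\prod_p\mu_p(\Lat) + o(X^{n^2}).
\]

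The main technical obstacle, as in \cite{BS2selm}, is verifying that the cuspidal bound of Proposition \ref{cutcusp} holds uniformly over all cosets $\gamma_j + M\Vij(\Z)$ with an error term that is genuinely $o(X^{n^2})$ and not merely $O_M(X^{n^2})$. Since the number of cosets depends only on $M$ and is independent of $X$, and the Davenport error for each coset is $O(X^{n^2-1})$ times factors bounded uniformly in $M$, this uniformity is automatic; it is enough to check that each step in the proof of Proposition \ref{cutcusp} only uses homogeneity of the region and lattice covolume, both of which respect passage to a finite-index sublattice.
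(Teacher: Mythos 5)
Your proposal is correct and follows essentially the same route as the paper: the paper proves an intermediate lemma (Lemma \ref{precongcount}) by decomposing $\Lat$ into cosets of $m\Vij(\Z)$, applying Davenport's Proposition \ref{Davlem} to each coset over $\Fu\setminus\Fu'$ to get the density factor $k/m^{n^2}=\prod_p\mu_p(\Lat)$, and then reruns the proof of Theorem \ref{countint} (cusp bound and main-body volume computation unchanged, since $\Lat\subset\Vij(\Z)$). Your worry about $\Gij(\Z)$-invariance of individual cosets is moot because the integrand is a raw lattice-point count and the paper explicitly defines $N(S,X)$ via the averaged integral even for non-invariant $S$.
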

The structure of our proof shall be thus: we will first prove Lemma \ref{precongcount}, and using it, we will prove Proposition\ref{redorb}.  We will see that Theorem \ref{congcount} follows immediately. We remark that the proofs of Lemma \ref{precongcount} and Proposition \ref{redorb} are independent of Theorem \ref{countint}.
\begin{Lemma}\label{precongcount}
Notation as above. We then have
$$\int_{h\in \Fu \setminus \Fu'}\#\{ hA_0\D^{(a,b)}(X) \cap \Lat \}dh = \prod_p \mu_p(\mathcal{L})\int_{h\in \Fu \setminus \Fu'}\#\{ hA_0\D^{(a,b)}(X) \cap \Vij(\Z) \}dh + o(X^{n^2})$$
\end{Lemma}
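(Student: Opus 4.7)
The plan is a standard sieve-type argument using Davenport's estimate (Proposition \ref{Davlem}) applied class-by-class. Since $\Lat$ is cut out by congruence conditions modulo finitely many prime powers, there is a positive integer $M$ and a subset $S \subset \Vij(\Z)/M\Vij(\Z)$ such that $\Lat$ is the disjoint union of the residue classes $\gamma + M\Vij(\Z)$ for $\gamma \in S$; by the definition of $p$-adic density, $|S|/M^{n^2} = \prod_p \mu_p(\Lat)$.

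For each $\gamma \in S$ and each $h \in \Fu \setminus \Fu'$, I would apply Proposition \ref{Davlem} to the bounded semialgebraic set $hA_0 \D^{(a,b)}(X)$ counted in the sublattice $M\Vij(\Z) + \gamma$. This yields
\[
\#\{hA_0 \D^{(a,b)}(X) \cap (\gamma + M\Vij(\Z))\} = \frac{\Vol(hA_0 \D^{(a,b)}(X))}{M^{n^2}} + O(E_h),
\]
where $E_h$ is bounded above by the largest lower-dimensional coordinate projection volume of $hA_0 \D^{(a,b)}(X)$, with implied constant depending only on $M$, $A_0$, and $\D^{(a,b)}$. Summing over $\gamma \in S$ and comparing with the analogous estimate for $\Vij(\Z)$ (the case in which $S$ is all of $\Vij(\Z)/M\Vij(\Z)$, so that $|S|/M^{n^2}=1$), the lemma reduces to the uniform bound
\[
\int_{h\in\Fu\setminus\Fu'} E_h\,dh = o(X^{n^2}).
\]

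To establish this bound I would invoke the geometric observation (which is the key input from the proof of Theorem \ref{countint}) that for $h \in \Fu \setminus \Fu'$, the projection of $hA_0 \D^{(a,b)}(X)$ onto the $a_{mm}$-axis has length at least $1$, and that $a_{mm}$ is the unique minimal element in the partial order $\preceq$. This minimality forces every proper coordinate projection volume to be bounded by the volume of the projection onto the hyperplane $\{a_{mm}=0\}$, which is in turn bounded by $\Vol(hA_0 \D^{(a,b)}(X))/(Xw_{mm}(h))$. Combining $\Vol(hA_0 \D^{(a,b)}(X)) = O(X^{n^2})$ with the integrability fact $\int_{\Fu \setminus \Fu'} w_{mm}^{-1}(h)\,dh = O(1)$ recorded immediately before the theorem, the error integral is $O(X^{n^2 - 1}) = o(X^{n^2})$.

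The main obstacle is the verification that every lower-dimensional projection of $hA_0 \D^{(a,b)}(X)$ is indeed dominated by the projection onto $\{a_{mm}=0\}$ \emph{uniformly} in $h \in \Fu \setminus \Fu'$. This relies on the combinatorial fact that $a_{mm}$ is $\preceq$-minimal, so that the torus weight $w_{mm}$ is maximal in absolute value among the weights $w_{ij}$, together with the cusp truncation that guarantees $Xw_{mm}(h) \geq 1$. Once this uniform bound is in place, the remaining steps are purely routine applications of Davenport's lemma and the Haar measure computation already carried out in the paper.
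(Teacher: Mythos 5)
Your proposal is correct and follows essentially the same route as the paper: decompose $\Lat$ into translates of $M\Vij(\Z)$, apply Proposition \ref{Davlem} to each translate, sum, and identify the ratio $|S|/M^{n^2}$ with $\prod_p\mu_p(\Lat)$. The only difference is that you spell out the error-term analysis (via the $\preceq$-minimality of $a_{mm}$ and the cusp truncation), which the paper defers to the proof of Theorem \ref{countint}; this is a welcome addition but not a different argument.
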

\begin{proof}
We follow the proof of \cite[Theorem 35]{BG}. Suppose that $\mathcal{L}$ is defined by congruences modulo some integer $m$. Then $\mathcal{L}$ may be viewed as a disjoint union of translates $\Lat_1 \hdots \Lat_k$ of the lattice $m\Vij(\Z)$. To estimate $\#\{ hA_0\D^{(a,b)}(X) \cap \Lat \}$, we again use Propsition \ref{Davlem} and see that $\#\{ hA_0\D^{(a,b)}(X) \cap \Lat_i \} = 1/m^{n^2} \Vol(hA_0\D^{(a,b)}(X))$, up to an error of $o(X^{n^2})$. Summing over $i$, we obtain
$$\int_{h\in \Fu \setminus \Fu'}\#\{ hA_0\D^{(a,b)}(X) \cap \Lat \}dh = k/m^{n^2}\int_{h\in \Fu \setminus \Fu'}\Vol(hA_0\D^{(a,b)}(X))dh + o(X^{n^2}).$$
The lemma follows from the observation that the product of the $p$-adic densities of $\Lat \subset \Vij(\Z)$ equals $k/m^{n^2}$. 
\end{proof}

The proof of Theorem \ref{congcount} runs along the same lines as the proof of Theorem \ref{countint}. The only additional input is Lemma \ref{precongcount}, which has to be applied at the obvious point. 

\subsubsection*{Computations modulo $p$ and consequences}

\begin{Lemma}\label{redmod}
The ratio of the number of reducible elements in $\Vij(\F_p)$  to the total number of elements in $\Vij(\F_p)$ is bounded away from $1$ independent of $p$.
\end{Lemma}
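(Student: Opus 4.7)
The plan is to compare the number of distinguished $\Gij(\F_p)$-orbits above each regular semisimple invariant with the total number of orbits, and then show that a positive proportion of $c \in \Invrs(\F_p)$ have stabilizer $\Stab_c$ of order at least four, uniformly in $p$.

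First I would establish the fiber count. By Lang's theorem $H^1(\F_p, \Gij)$ is trivial, so each regular semisimple fiber $\pi^{-1}(c)(\F_p)$ is a disjoint union of $|H^1(\F_p, \Stab_c)| = |\Stab_c(\F_p)| = 2^{k-1}$ orbits of size $|\Gij(\F_p)|/2^{k-1}$ each, giving exactly $|\Gij(\F_p)|$ points in total. Here $k$ denotes the number of irreducible factors of the associated polynomial $f$, and the first equality uses that $\Stab_c$ is a finite \'etale commutative group scheme over $\F_p$. There are at most two distinguished orbits per fiber (the $1$- and $2$-distinguished ones), so the number of distinguished elements in $\pi^{-1}(c)(\F_p)$ is at most $\min(2, 2^{k-1}) \cdot |\Gij(\F_p)|/2^{k-1}$, which equals $|\Gij(\F_p)|$ when $k \leq 2$ but is bounded by $|\Gij(\F_p)|/2$ whenever $k \geq 3$.

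Summing over $c \in \Invrs(\F_p)$ would then yield
\begin{equation*}
\#\{\text{irreducible } \alpha \in \Vij(\F_p)\} \;\geq\; \tfrac{1}{2}\,|\Gij(\F_p)| \cdot |\{c \in \Invrs(\F_p) : k(c) \geq 3\}|.
\end{equation*}
Since $|\Gij(\F_p)|/|\Vij(\F_p)| \geq c_n/p^n$ holds for every odd $p$ (because $|\SO_n(\F_p)|$ is comparable to $p^{n(n-1)/2}$ uniformly in odd $p$, with a constant depending only on $n$), the lemma reduces to showing that the proportion of $c \in \Invrs(\F_p)$ for which $f$ factors into at least three irreducible factors is bounded below by a positive constant independent of $p$.

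To establish this last statement I would split into two regimes. For $p$ exceeding a threshold $p_0 = p_0(n)$, the factorization-type distribution of random monic separable polynomials of degree $n$ over $\F_p$ agrees with the cycle-type distribution on $S_n$ to within an error of $O(1/p)$, as follows from the explicit count of polynomials with given factorization type. Since $n = 2m+1 \geq 5$, the fraction of $\sigma \in S_n$ with at least three cycles is a fixed positive number, so the polynomial proportion is bounded below for all $p \geq p_0$. For the finitely many primes $p < p_0$, I would exhibit $\Omega(p^n)$ suitable invariants explicitly via polynomials of the form $(x-\alpha_1)(x-\alpha_2)g(x)$, where $\alpha_1 \neq \alpha_2$ are nonzero elements of $\F_p$ and $g$ is a monic irreducible of degree $n-2$ with constant term chosen so that $\alpha_1 \alpha_2 g(0)$ is a nonzero square; taking the minimum of the resulting positive ratios yields the uniform constant. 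The main obstacle will be the uniform treatment of these small primes, and also the minor additional bookkeeping needed to control the non-regular-semisimple contribution to the reducible count, which is $O(1/p)$ by the codimension-one nature of the locus $\{e \cdot \Disc(f) = 0\}$ and therefore does not affect the uniform lower bound.
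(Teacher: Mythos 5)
Your proposal is correct and follows essentially the same route as the paper: reduce to lower-bounding the irreducible count, observe that a fiber over a regular semisimple $c$ whose polynomial has at least three irreducible factors contains at least $4$ orbits of which at most $2$ are distinguished, compare orbit sizes against $p^{n^2}$, and finish by showing a positive proportion of invariants (uniformly in $p$) yield separable $f$ with nonzero square constant term and at least three irreducible factors. Your version is somewhat more precise than the paper's (exact fiber size $|\Gij(\F_p)|$ via Lang's theorem and the explicit large-$p$/small-$p$ split for the polynomial count, which the paper merely asserts), with the one small caveat that the large-$p$ factorization statistics should be carried out conditional on the constant term being a nonzero square — your explicit $(x-\alpha_1)(x-\alpha_2)g(x)$ construction already handles this and in fact works for all $p$.
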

\begin{proof}
It suffices to show that the ratio of irreducible elements in $\Vij(\F_p)$ to the total number of elements is bounded away from $0$. 

The total number of invariants modulo $p$ is $p^n$. The cardinality of $\Gij(\F_p)$ is at least $p^{n^2 -n}/2$ for large enough $p$. The stabilizer of any regular semisimple element has at most $2^n$ elements. The cardinality of $\Vij(\F_p)$ is $p^{n^2}$. We define a set of invariants $a_i,e$ to be ``good'' if there exists an orbit with these invariants which is regular semisimple. It suffices to prove that the ratio of the number of good invariants to the total number of invariants is bounded away from zero. Indeed, if there were $N$ good invariants, then there would be at least $Np^{n^2 - n}/2^{n + 1}$ irreducible elements in $\Vij(\F_p)$. If $\frac{N}{p^n} > r_n$, then $\frac{Np^{n^2 - n}/2^{n + 1}}{p^{n^2}} > r_n/2^{n+1}$.

The proportion of polynomials $f(x)$ of degree $n$ which have at least three irreducible factors, and which also have non-zero discrimant, and whose constant term a non-zero square, is positive and bounded away from $0$ independent of $p$. Let $r_n$ be some positive lower bound for the above proportion for all $p$. For invariants giving such polynomials, the number of $\F_p$-orbits is at least $4$. This is because $L = \F_p[x]/(f(x))$ will be a product of at least three fields, and so $|( L^{\times}/L^{\times2})_{N=1}| \geq 4$. Such invariants have to be good, because there have to be at least $2$ irreducible $\F_p$-orbits. The lemma follows. 
\end{proof}

\begin{Proposition}\label{redorb}
We have  
$$\int_{h\in \Fu \setminus \Fu'}\#\{ hA_0\D(X) \cap \Vij(\Z)^{\irr} \}dh = \int_{h\in \Fu \setminus \Fu'}\#\{ hA_0\D(X) \cap \Vij(\Z) \}dh + o(X^{n^2})$$
\end{Proposition}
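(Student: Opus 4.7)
The plan is to show that the contribution of reducible integral elements to the right-hand integral is $o(X^{n^2})$, so that replacing $\Vij(\Z)$ by $\Vij(\Z)^{\irr}$ incurs only a negligible error. Recall that $\alpha \in \Vij(\Z)$ is reducible iff either $\pi(\alpha) \notin \Invrs(\Q)$ or $\alpha$ is distinguished. For the first case, the locus $\{\Disc(\pi(\alpha)) = 0\}$ cuts out a proper subvariety of $\Vij$; applying Proposition \ref{Davlem} to the box $hA_0\D(X)$ exactly as in the argument for Theorem \ref{countint} yields $O(X^{n^2-1})$ integral points of vanishing discriminant, and integrating over $h \in \Fu \setminus \Fu'$ contributes $o(X^{n^2})$.

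For the distinguished contribution I run a sieve. For each prime $p$ let $\Lat_p \subset \Vij(\Z)$ denote the set of $\alpha$ whose reduction is reducible in $\Vij(\F_p)$; Lemma \ref{redmod} yields a uniform constant $\eta < 1$ with $\mu_p(\Lat_p) \leq \eta$. The key claim is that any distinguished $\alpha \in \Vij(\Z)$ with $\pi(\alpha) \in \Invrs(\Q)$ lies in $\Lat_p$ for every prime $p$. If $p \mid \Disc(\pi(\alpha))$ this is automatic, since then the reduction of $\alpha$ already fails to be regular semisimple over $\F_p$. Otherwise $p$ is a prime of good reduction; the $1$- and $2$-distinguished orbits of $\alpha$ correspond to the classes $1$ and $-\gamma$ in $(L^{\times}/L^{\times 2})_{N=1}$, where $L = \Q[x]/(f(x))$, and reducing modulo $p$ sends these to the classes $1$ and $-\bar\gamma$ in $(\bar L^{\times}/\bar L^{\times 2})_{N=1}$ with $\bar L = \F_p[x]/(\bar f(x))$, which are precisely the $1$- and $2$-distinguished classes over $\F_p$.

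For any finite set $S$ of primes, $\Lat_S := \bigcap_{p \in S}\Lat_p$ is defined by congruence conditions modulo $\prod_{p \in S}p$, and satisfies $\prod_p \mu_p(\Lat_S) \leq \eta^{|S|}$. Since Lemma \ref{precongcount} is proved independently of Proposition \ref{redorb}, applying it to $\Lat_S$ gives
$$\int_{h \in \Fu \setminus \Fu'} \#\{hA_0 \D(X) \cap \Lat_S\}\, dh \;\leq\; \eta^{|S|} \int_{h \in \Fu \setminus \Fu'}\#\{hA_0 \D(X) \cap \Vij(\Z)\}\, dh + o(X^{n^2}).$$
The right-hand integral is $O(X^{n^2})$ by a direct volume bound on $\Fu \D(X)$, and every distinguished integral $\alpha$ with regular semisimple invariants lies in $\Lat_S$. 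Letting $|S| \to \infty$ makes $\eta^{|S|}$ arbitrarily small, so the distinguished contribution is $o(X^{n^2})$. Combined with the bound on the non-regular-semisimple contribution this proves the proposition. The main obstacle is establishing the key claim above, namely that the classes in $(L^{\times}/L^{\times 2})_{N=1}$ distinguishing the $1$- and $2$-distinguished orbits reduce compatibly to the analogous classes over $\F_p$ at primes of good reduction; this is the place where one must use that the stabilizer scheme $\Stab_c$ extends to a smooth group scheme over $\Z_p$ for such $p$, so that the reduction of cohomology classes is well-defined and preserves the distinguished ones.
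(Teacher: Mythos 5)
Your proof is correct in outline and uses the same mechanism as the paper: both arguments sieve the reducible integral elements through the sets $\Lat_Y=\bigcap_{p\le Y}\Lat_p$, apply Lemma \ref{precongcount} to convert the count into a product of local densities, and invoke Lemma \ref{redmod} to force that product to $0$; both also rest on the containment $\Vij(\Z)^{\red}\subset\Lat_p$ for every $p$, which the paper simply asserts. Two remarks on where you deviate. First, your separate Davenport estimate for the locus $\Disc(\pi(\alpha))=0$ is unnecessary and, as written, slightly shaky: the $O(X^{n^2-1})$ bound is not obviously uniform over the skewed boxes $hA_0\D(X)$ as $h$ ranges over the unbounded set $\Fu\setminus\Fu'$; but these elements reduce to non-regular-semisimple (hence reducible) elements modulo every $p$, so they are already swept up by the sieve and no separate estimate is needed. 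Second, the ``key claim'' you flag as the main obstacle has a direct proof that bypasses the orbit parametrization and the smoothness of $\Stab_c$ entirely: if $T\in\Vij(\Z)$ is $i$-distinguished with regular semisimple invariants, pick a maximal isotropic $X\subset V_i(\Q)$ with $T^2X\subset X^{\perp}$, replace $X$ by the saturation of $X\cap D_i$ inside $D_i$, and reduce modulo $p$; for odd $p$ the self-duality of $D_i$ makes the reduced form nondegenerate, so the reduced subspace is a maximal isotropic of dimension $m$ and the containment $T^2X\subset X^{\perp}$ persists, exhibiting the reduction of $T$ as $i$-distinguished, hence reducible. Discarding the single prime $p=2$ from the sieve does not affect the divergence of $\prod_p\mu_p$, so with this substitution your argument closes the gap and coincides with the paper's.
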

\begin{proof}
It suffices to prove
$$\int_{h\in \Fu \setminus \Fu'}\#\{ hA_0\D(X) \cap \Vij(\Z)^{\red} \}dh = o\bigg(\int_{h\in \Fu \setminus \Fu'}\#\{ hA_0\D(X) \cap \Vij(\Z) \}dh\bigg).$$ 
To that end, fix $Y \in \N$, some positive number, and let $\Lat_Y \subset \Vij(\Z)$ be the set of all elements whose reduction modulo $p$ is reducible in $\Vij(\F_p)$, for $p \le Y$. For every $Y \in \N$, the set $\Lat_Y$ is defined by congruence conditions and contains $\Vij(\Z)^{\red}$. 

 By Lemma \ref{precongcount}, we have  
$$\int_{h\in \Fu \setminus \Fu'}\#\{ hA_0\D(X) \cap \Lat_Y \}dh = \prod_{p<Y}\mu_p(\Lat_Y)\int_{h\in \Fu \setminus \Fu'}\#\{ hA_0\D(X) \cap \Vij(\Z) \}dh.$$
By Lemma \ref{redmod}, each $\mu_p(\Lat_Y)$ is bounded away from $1$ independent of $p$. Therefore, we have $\displaystyle{\lim_{Y \to \infty}\prod_{p<Y}\mu_p(\Lat_Y)} = 0$. 

The proposition follows, because $\Vij(\Z)^{\red} \subset \Lat_Y$ for all $Y$. 
\end{proof}

\begin{Lemma}\label{bigstab}
The proportion of elements in $\Vij(\F_p)$ having non-trivial stabilizer in $\Gij(\F_p)$ is bounded away from $1$ independent of $p$.
\end{Lemma}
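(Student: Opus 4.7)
The plan is to exhibit a positive proportion, bounded below independently of $p$, of elements $\alpha \in \Vij(\F_p)$ whose stabilizer in $\Gij(\F_p)$ is trivial; this immediately gives the complementary upper bound. By Corollary \ref{bigstabred}, for a regular semisimple $\alpha$ with invariants $c = (a_1,\ldots,a_{n-1},e)$, we have $\Stab_c(\F_p) = \{1\}$ precisely when the associated polynomial $f(x) = x^n + a_1 x^{n-1} + \cdots + a_{n-1}x + e^2$ is irreducible over $\F_p$ (which uses that $n$ is odd, so the norm on $\mu_2$ is the identity). So it suffices to lower-bound the number of $\alpha$ whose associated $f$ is irreducible.

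First I would count good invariants. The standard M\"obius formula gives that the number of monic irreducible $f \in \F_p[x]$ of degree $n$ equals $\frac{p^n}{n} + O(p^{n/2})$. For a random such $f$, the constant term equals $(-1)^n N_{\F_{p^n}/\F_p}(\alpha)$ for $\alpha$ a root, and as $\alpha$ varies over degree-$n$ elements of $\F_{p^n}^\times$, the norm is equidistributed on $\F_p^\times$. Hence roughly half of the irreducible $f$ have a nonzero square as constant term, and each such $f$ lifts to two invariants $c$ (one per square root $e$). This produces at least $\tfrac{p^n}{n}(1+o_p(1))$ invariants $c \in \Inv(\F_p)$ with $f$ irreducible; they are automatically regular semisimple, since irreducibility of $f$ over the perfect field $\F_p$ forces separability and $f(0)\neq 0$.

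Next I would bound the orbit sizes from below. For each such $c$, the explicit representative $\alpha_1(c) \in \Vij(\F_p)$ constructed in Section~4 has trivial stabilizer by Corollary \ref{bigstabred}, so its $\Gij(\F_p)$-orbit has size exactly $|\Gij(\F_p)|$. Combining, the number of $\alpha \in \Vij(\F_p)$ whose stabilizer is trivial is at least $\tfrac{p^n}{n}(1+o_p(1)) \cdot |\Gij(\F_p)|$. Using $|\Vij(\F_p)| = p^{n^2}$ and $|\Gij(\F_p)| = p^{n^2-n}(1+o_p(1))$, the proportion of such $\alpha$ is $\geq \tfrac{1}{n}(1+o_p(1))$, so the proportion with nontrivial stabilizer is at most $1 - \tfrac{1}{n}(1+o_p(1))$, which is bounded away from $1$ uniformly in $p$.

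The only point requiring care is uniformity in $p$ of the equidistribution of constant terms among irreducible polynomials of degree $n$; this reduces to a standard character-sum estimate over $\F_p[x]$, and since only a positive lower bound (not a sharp asymptotic) is needed, very crude error terms suffice. No serious obstacle is anticipated.
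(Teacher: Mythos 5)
Your argument is correct and is essentially the paper's own proof: the paper likewise reduces to showing that the proportion of invariants whose associated $f$ is irreducible with constant term a nonzero square is bounded away from zero, and then transfers this to a proportion of elements of $\Vij(\F_p)$ via the orbit--stabilizer count (as in Lemma \ref{redmod}). Your equidistribution worry is even unnecessary, since the norm map $\F_{p^n}^{\times}\to\F_p^{\times}$ has fibers of exactly equal size, so the count of irreducibles with square constant term is exact up to the $O(p^{n/2})$ correction for elements of proper subfields.
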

\begin{proof}
Using the same argument as in Lemma \ref{redmod}, it suffices to show that the proportion of invariants having the required property is bounded away from 1 independent of $p$. To that end, we remark that the proportion of degree $n$ polynomials which are irreducible with constant term a non-zero perfect square, is bounded away from zero independent of $p$. The proposition follows.
\end{proof}

\begin{Proposition}\label{nostab}
Let $S\subset \Vij(\Z)$ consist of those $T$ with the property that $\Stab_T(\Q)$ is non-trivial. Then $N(S,X) = o(X^{n^2})$.
\end{Proposition}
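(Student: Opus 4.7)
The plan is to mimic the proof of Proposition \ref{redorb}, using Lemma \ref{bigstab} in place of Lemma \ref{redmod}. Since there are only finitely many pairs $(a,b)$, it suffices to prove $N(S \cap \Vij(\Z)^{(a,b)}, X) = o(X^{n^2})$ for each $(a,b)$. First I would observe that every $T \in S$ has non-trivial stabilizer modulo each prime of good reduction: by Proposition \ref{stabis} and Corollary \ref{bigstabred}, $\Stab_T(\Q) \neq \{1\}$ is equivalent to the characteristic polynomial $f$ being reducible over $\Q$, hence over $\Z$ by Gauss's lemma, hence modulo every prime $p$. Whenever $\bar T$ is regular semisimple at $p$, we have $\Stab_{\bar T}(\F_p) \cong \Res_{\bar L/\F_p}(\mu_2)_{N=1}(\F_p)$ with $\bar L = \F_p[x]/(\bar f)$, which is non-trivial because $\bar f$ is reducible.

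For each $Y \in \N$, I would define $\Lat_Y \subset \Vij(\Z)$ to consist of those $T$ such that for every prime $p \leq Y$, either $\bar T$ has non-trivial stabilizer in $\Gij(\F_p)$ or $\bar T$ is not regular semisimple modulo $p$. This set is $\Gij(\Z)$-invariant and is defined by congruence conditions modulo $\prod_{p \leq Y} p$; by the preceding paragraph, $S \subset \Lat_Y$ for every $Y$. The $p$-adic density $\mu_p(\Lat_Y)$ is at most $1$ minus the density of elements in $\Vij(\F_p)$ that are both regular semisimple and have trivial stabilizer, and this latter quantity is bounded below by $c - O(1/p)$ for some absolute constant $c > 0$, by Lemma \ref{bigstab} combined with the fact that the non-regular-semisimple locus (cut out by $\Disc(f) \cdot e = 0$) has density $O(1/p)$. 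Hence $\mu_p(\Lat_Y) \leq 1 - c/2$ for all $p$ above some absolute threshold, and $\mu_p(\Lat_Y) < 1$ for each of the finitely many smaller primes, so $\prod_{p \leq Y} \mu_p(\Lat_Y) \to 0$ as $Y \to \infty$.

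Applying Lemma \ref{precongcount} to $\Lat_Y$ and combining with the averaging argument used to prove Theorem \ref{congcount} yields
\[
N(\Lat_Y \cap \Vij(\Z)^{(a,b)}, X) = \prod_{p \leq Y} \mu_p(\Lat_Y) \cdot N(\Vij(\Z)^{(a,b)}, X) + o(X^{n^2}).
\]
By Theorem \ref{countint}, $N(\Vij(\Z)^{(a,b)}, X) = O(X^{n^2})$, and since $S \cap \Vij(\Z)^{(a,b)} \subset \Lat_Y \cap \Vij(\Z)^{(a,b)}$, letting $Y \to \infty$ gives $N(S \cap \Vij(\Z)^{(a,b)}, X) = o(X^{n^2})$; summing over the finitely many pairs $(a,b)$ completes the proof. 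The main obstacle is the uniform-in-$p$ lower bound on the density of regular semisimple elements with trivial stabilizer, which is precisely the combination of Lemma \ref{bigstab} with the codimension-one nature of the discriminant locus and the identification of $\Stab_{\bar c}$ at regular semisimple points with the kernel of the norm on $\Res_{\bar L/\F_p}(\mu_2)$.
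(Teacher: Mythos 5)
Your proposal is correct and follows essentially the same route as the paper: the paper likewise uses Corollary \ref{bigstabred} to translate non-triviality of $\Stab_T(\Q)$ into reducibility of $f$, reduces modulo every prime, invokes Lemma \ref{bigstab} for a density bounded away from $1$ uniformly in $p$, and concludes by the same congruence-sieve argument as in Proposition \ref{redorb}. Your write-up merely makes explicit the sets $\Lat_Y$ and the $O(1/p)$ handling of the non-regular-semisimple locus, which the paper leaves implicit.
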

\begin{proof}
Let $f \in \Z[x]$ be the polynomial associated to $T$. By Corollary \ref{bigstabred}, the stabilizer in $\Gij(\Q)$ is non-trivial if and only if $f$ is not irreducible. Clearly, $f$ is irreducible in $\Q[x]$ only if its reduction modulo $p$ is irreducible for every prime $p$, and by applying Corollary \ref{bigstabred} again, we see that this happens precisely when the stabilizer of $T$ modulo $p$ is trivial. However, by Lemma \ref{bigstab}, the proportion of elements in $\Vij(\F_p)$ having non-trivial stabilizer is bounded away from $1$ independent of $p$. The product of this ratio over all primes diverges to 0. Therefore, the same argument as in the proof of Proposition \ref{redorb} applies to prove our result. 
\end{proof}

\begin{Lemma}\label{Twodist}
The proportion of invariants over $\F_p$ which are either not regular semisimple, or which satisfy the condition that the distinguished orbits are in the same $\Gij(\F_p)$-orbit, is bounded away from $1$ independent of $p$. 
\end{Lemma}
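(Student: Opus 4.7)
The plan is to apply Corollary \ref{gammasquare}, which identifies the condition ``the two distinguished orbits coincide as $\Gij(\F_p)$-orbits'' with ``$(-\gamma)$ is a square in $L^{\times}$'', where $L = \F_p[x]/(f(x))$ and $\gamma$ is the image of $x$. Thus it suffices to exhibit, uniformly in the odd prime $p$, a positive proportion of regular semisimple invariants $c \in \Invrs(\F_p)$ for which $(-\gamma) \notin (L^{\times})^2$.

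I would produce such invariants explicitly by arranging $f$ to have a simple rational root $\beta$ whose negative is a non-square. First, fix any $\beta \in \F_p^{\times}$ with $-\beta \notin (\F_p^{\times})^2$; for odd $p$, the set $S$ of such $\beta$ has cardinality $(p-1)/2 \geq 1$. For any separable $f$ with $f(\beta) = 0$, the root at $\beta$ is necessarily simple, so $f = (x-\beta)\, g(x)$ with $\gcd(g, x-\beta) = 1$. This yields a decomposition $L \cong \F_p \times L'$ under which $(-\gamma)$ projects to $-\beta$ in the $\F_p$-factor. Since $-\beta$ is a non-square in $\F_p^{\times}$, the element $(-\gamma)$ cannot be a square in $L^{\times}$.

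Next, I would count: for each fixed $\beta \in S$, the linear condition $f_c(\beta) = 0$ cuts out $(p-1)\, p^{n-2}$ invariants $c$ with $e \neq 0$, and for distinct $\beta,\beta' \in S$, the joint condition $f_c(\beta) = f_c(\beta') = 0$ cuts out $(p-1)\, p^{n-3}$. Inclusion--exclusion over $\beta \in S$ then yields at least
$$\frac{p-1}{2}\,(p-1)\, p^{n-2} \; - \; \binom{(p-1)/2}{2}\,(p-1)\, p^{n-3} \; = \; \Omega(p^n)$$
invariants whose polynomial has some root in $S$, with implied constant independent of $p$. Discarding the $O(p^{n-1})$ invariants for which $f_c$ fails to be separable still leaves a proportion of the total $p^n$ invariants that is bounded below by a positive constant depending only on $n$ (the few small values of $p$ for which the asymptotic is not yet dominant may be handled by direct verification, as each gives at least a fixed positive proportion).

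The substantive content of the argument is Corollary \ref{gammasquare}, which converts an orbit-theoretic statement into the clean arithmetic condition $(-\gamma) \in (L^{\times})^2$; after that the construction of a good family of $f$'s and the counting are routine, and I do not anticipate a serious obstacle.
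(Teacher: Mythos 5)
Your proof is correct, and it rests on the same pivot as the paper's: Corollary \ref{gammasquare} reduces everything to showing that $(-\gamma)$ fails to be a square in $L^{\times}$ for a positive proportion of regular semisimple invariants, uniformly in $p$. Where you diverge is in how that family is produced. The paper takes $f$ with $n$ distinct \emph{nonzero roots in} $\F_p$, at least one a quadratic residue and at least one a non-residue, and then runs a short case analysis on whether $-1$ is a square in $\F_p^{\times}$ to conclude that some component of $-\gamma$ is a non-residue. You instead impose only that $f$ have a single simple root $\beta$ with $-\beta$ a non-residue, read off the conclusion from the factor $\F_p$ of $L$, and count the resulting locus explicitly by a Bonferroni-type inclusion--exclusion over $\beta$, checking that the two linear conditions for distinct $\beta,\beta'$ are independent. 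Your variant buys two small things: it eliminates the case split on the quadratic character of $-1$, and it does not require $f$ to split completely over $\F_p$ --- a condition that is literally unsatisfiable when $p \leq n$, so the paper's construction as written only applies for $p$ large (harmless for the application, where finitely many primes may be discarded, but your route is cleaner on this point). The paper's count, by contrast, is asserted qualitatively ("the proportion of such $f$ is bounded away from zero") where yours is explicit. Both arguments are equally dependent on the substantive input, Corollary \ref{gammasquare}, and both gloss over small primes in the same benign way.
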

\begin{proof}
It suffices to show that the proportion of invariants over $\F_p$ which are regular semisimple, and such that the distinguished orbits are in different $\Gij(\F_p)$-orbits, is bounded away from zero independent of $p$. The proportion of invariants such that the corresponding polynomial $f$ has $n$ distinct non-zero roots over $\F_p$ is bounded away from zero independent of $p$. The proportion of such $f$ with the properties that at least one root is a perfect square in $\F_p^{\times}$ and at least one root is \textit{not} a perfect square,  is again bounded away from zero, independent of $p$. Therefore, the proportion of invariants with the property that $\gamma$ is not a perfect square in $L^{\times}$ is bounded away from zero, independent of $p$ (here, $L = \F_p[x]/(f(x))$ is the \'etale algebra associated to regular semisimple invariants). If $(-1)$ is a square in $\F_p^{\times}$, then $(-\gamma)$ is clearly not a perfect square either. If $(-1)$ is not a perfect square, then because one of the components of $\gamma$ is a perfect square, that component of $(-\gamma)$ would now cease to be a square. In either case, $(-\gamma)$ is \textit{not} a perfect square. 

The result follows from Corollary \ref{gammasquare}, which states that the two distinguished orbits lie in the same $\Gij(\F_p)$-orbit if and only if $-\gamma$ is a perfect square in $L\x$.
\end{proof}

\begin{Proposition}\label{difdist}
Let $N_X$ denote the number of invariants $c \in \Invrs(\Z)$ with height bounded by $X$ such that the two distinguished orbits in $\Vij(\Q)$ with invariants $c$ lie in the same $\Gij(\Q)$ orbit. Then $N_X = o(X^{n^2})$, i.e. the proportion of $N_X$ to the number of invariants with height bounded by $X$ goes to zero.
\end{Proposition}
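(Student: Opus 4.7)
The plan is to combine Corollary \ref{gammasquare} with Lemma \ref{Twodist} by a sieve on the invariants modulo primes, following the templates of Propositions \ref{redorb} and \ref{nostab}. By Corollary \ref{gammasquare}, the condition counted by $N_X$ is equivalent to $(-\gamma)$ being a perfect square in $L^{\times}$, where $L = \Q[x]/(f_c(x))$ and $\gamma$ is the image of $x$. For each prime $p$, let $T_p \subset \Inv(\F_p)$ denote the subset of invariants over $\F_p$ that are either not regular semisimple, or for which the two distinguished orbits coincide in $\Gij(\F_p)$; Lemma \ref{Twodist} provides $\delta > 0$ such that $\mu_p(T_p) := |T_p|/|\Inv(\F_p)| \leq 1 - \delta$ for all sufficiently large $p$.

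The heart of the argument is a reduction-mod-$p$ compatibility: we will show that if $c$ is counted by $N_X$, then $\bar c \in T_p$ for every prime $p$. If $p$ divides $\Disc(f_c) \cdot e$, then $\bar f_c$ has a repeated root or has $0$ as a root, so $\bar c$ is not regular semisimple and lies in $T_p$ trivially. Otherwise, write $(-\gamma) = \lambda^2$ with $\lambda \in L^{\times}$. Since $p \nmid \Disc(f_c)$, the order $\Z_p[x]/(f_c(x))$ coincides with the maximal order $\mathcal{O}_L \otimes \Z_p$, and since $p \nmid e$, the element $\gamma$ (of norm $\pm e^2$) is a unit there. The element $\lambda$ satisfies the monic integral relation $x^2 + \gamma = 0$, hence lies in $\mathcal{O}_L \otimes \Z_p$, and it is automatically a unit because $\lambda^2$ is. Reducing modulo $p$ yields $\bar\lambda^2 = -\bar\gamma$ in $\bar L^{\times}$ with $\bar L = \F_p[x]/(\bar f_c)$, and Corollary \ref{gammasquare} applied over $\F_p$ then places $\bar c$ in $T_p$.

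With the reduction step in hand, fix $Y > 0$ and set $U_Y = \{c \in \Inv(\Z) : \bar c \in T_p \text{ for all } p \leq Y\}$, a set cut out by congruence conditions modulo $\prod_{p \leq Y} p$. A direct application of Proposition \ref{Davlem} to the box $\{|a_i| \leq X^{2i},\ |e| \leq X^n\}$ yields
$$\#\{c \in U_Y : \h(c) \leq X\} = \Big(\prod_{p \leq Y} \mu_p(T_p)\Big) \cdot 2^n X^{n^2} + O_Y(X^{n^2 - 2}).$$
Every $c$ counted by $N_X$ lies in $U_Y$, so $N_X$ is bounded above by the right-hand side. The product $\prod_{p \leq Y} \mu_p(T_p) \ll (1-\delta)^{\pi(Y)}$ tends to $0$ as $Y \to \infty$, so sending $X \to \infty$ first (absorbing the Davenport error) and then $Y \to \infty$ gives $N_X = o(X^{n^2})$.

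The only subtle point is the integrality step in the second paragraph: one has to verify that a square root of $-\gamma$ taken in $L^{\times}$ automatically lands in the maximal order and is a unit at every prime of good reduction, so that reduction modulo $p$ is well-defined. Once this is established, the sieve is essentially identical to those used in Propositions \ref{redorb} and \ref{nostab}.
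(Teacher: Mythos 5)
Your proposal is correct and follows essentially the same route as the paper: reduce via Corollary \ref{gammasquare} to $(-\gamma)$ being a square in $L^{\times}$, show this condition is preserved under reduction modulo each prime of good reduction, and then sieve using the density bound of Lemma \ref{Twodist}. The only difference is that you supply the integrality/unit verification for the square root of $-\gamma$, which the paper asserts without detail; that verification is correct.
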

\begin{proof}
Fix $c \in \Invrs(\Z) \subset \Invrs(\Q)$, and let $L$ denote the corresponding etale algebra of dimension $n$. By Corollary \ref{gammasquare}, the two distinguished orbits lie in the same $\Gij(\Q)$ orbit, precisely when $-\gamma$ is a perfect square in $\L^{\times}$. As $c \in \Invrs(\Z)$, all the data can be reduced modulo all primes $p$. 

If $(-\gamma)$ is a perfect square, then either $c$ modulo $p$ is no longer regular semisimple, or the corresponding $(-\gamma) \in L_p^{\times}$ stays a square (here, $L_p$ is the etale $\F_p$ algebra corresponding to the reduction of $c$ modulo $p$). 

Therefore, if the two distinguished  orbits lie in the same $\Gij(\Q)$ orbit, then the reduction of $c$ modulo $p$ is either not regular semisimple, or the two $\F_p$ distinguished orbits lie in the same $\Gij(\F_p)$ orbit. The set whose cardinality is $N_X$ is cut out by infinitely congruence conditions, whose local densities (by Lemma \ref{Twodist}) are bounded away from $1$. The proposition follows. 
\end{proof}
We conclude this paragraph with a lemma which we will need to prove Theorem \ref{12selm}.
\begin{Lemma}\label{smallonetwo}
Consider the proportion $r_p$ of $c = (a_1, a_2, \hdots, a_{n-1},e)\in \Inv(\F_p)$ with the following properties:
\begin{enumerate}
\item The associated polynomial $f$ satisfies $f(0) =0$, i.e. $e=0$. 
\item $f$ has distinct roots $u_1,\ u_2,\ \hdots u_{n-1},\ u_n=0$ over $\F_p$
\item The product of the non-zero roots is a perfect square.
\end{enumerate}
Then there exists some $r>0$ independent of $p$, such that $r_p > r/p$. 
\end{Lemma}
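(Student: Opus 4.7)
The plan is to exploit the codimension-one nature of the first condition, then parameterize by roots and use that squares form an index-two subgroup of $\F_p^{\times}$. First, imposing $e=0$ on $\Inv(\F_p) \cong \F_p^n$ cuts out a codimension-one subvariety, contributing an exact factor of $1/p$. It remains to prove that the proportion of $(a_1,\hdots,a_{n-1}) \in \F_p^{n-1}$ for which the auxiliary polynomial $g(x) := f(x)/x = x^{n-1} + a_1x^{n-2} + \hdots + a_{n-1}$ splits into $n-1$ distinct \emph{nonzero} linear factors whose product is a nonzero square is bounded below by a positive constant depending only on $n$. Since $n-1$ is even, the product of the roots of $g$ equals $(-1)^{n-1}a_{n-1} = a_{n-1}$, so the squareness condition is a condition on $a_{n-1}$ alone.

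Next, I would parameterize valid $g$'s by unordered subsets $\{u_1,\hdots,u_{n-1}\} \subset \F_p^{\times}$ of distinct elements with $\prod u_i$ a square. To produce many such subsets, fix distinct $u_2,\hdots,u_{n-1} \in \F_p^{\times}$ arbitrarily and put $N = \prod_{i \ge 2} u_i$. The condition that $u_1 N$ be a square restricts $u_1$ to the coset $N^{-1}(\F_p^{\times})^2$, which has exactly $(p-1)/2$ elements; further requiring $u_1 \notin \{u_2,\hdots,u_{n-1}\}$ removes at most $n-2$ of these, leaving at least $(p-1)/2 - (n-2)$ admissible values. The number of ways of choosing ordered distinct $u_2,\hdots,u_{n-1} \in \F_p^{\times}$ is $(p-1)(p-2)\cdots(p-n+2) \ge (p-n+1)^{n-2}$. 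Multiplying and dividing by $(n-1)!$ to pass to unordered subsets yields at least $c_n p^{n-1}$ valid unordered subsets, for some $c_n > 0$ depending only on $n$, whenever $p$ is sufficiently large (say $p \ge 2n$).

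Finally, since the total number of $(a_1,\hdots,a_{n-1}) \in \F_p^{n-1}$ is $p^{n-1}$, the proportion of $(a_1,\hdots,a_{n-1})$ satisfying the required conditions is bounded below by $c_n$. Combined with the factor of $1/p$ from the condition $e=0$, this gives $r_p \ge c_n/p$ for all sufficiently large $p$. The only subtlety is the finitely many small primes (in particular those with $p-1 < n-1$, where $\F_p^{\times}$ is simply too small to contain $n-1$ distinct elements): these can be absorbed by taking $r > 0$ smaller, since only finitely many primes are excluded, or handled in the application; this is the only real obstacle, and it is purely cosmetic rather than substantive. The core of the argument is the character-free counting above, which is elementary once we recognize that fixing all but one of the $u_i$ reduces the square condition to a single linear coset constraint on the last coordinate.
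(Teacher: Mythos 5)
Your proof is correct and follows essentially the same route as the paper's: both count the invariants with $e=0$ whose polynomial has $n-1$ distinct nonzero roots (giving $\sim p^{n-1}/(n-1)!$ out of $p^{n}$), and both observe that the square condition on the product of roots only costs a bounded factor — you justify this via the coset argument in the last root, where the paper simply asserts a factor of $1/2$. The caveat about very small primes (where $r_p=0$ and no $r$ works) is shared with the paper's own argument and is harmless since Proposition \ref{diverges} only invokes the lemma for $p$ sufficiently large.
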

\begin{proof}
The number of invariants such that the corresponding $f$ has distinct roots over $\F_p$, one of which is zero, is $\displaystyle{\prod_{i=1}^{n-1}(p-i)}/(n-1)!$. Adding the condition that the product of the non-zero roots is a square is the same as scaling this by a factor of $1/2$. The ratio $p\displaystyle{\prod_{i=1}^{n-1}(p-i)}/(n-1)!p^n$ is clearly bounded away from zero independent of $p$. The lemma follows.
\end{proof}
\subsubsection*{Infinitely many congruence conditions}
In this section, we count elements having bounded height in subsets of
$\Inv(\Z)$ and $\Vij(\Z)$ that are defined by certain infinite sets of
congruence conditions.

\begin{Definition}
  Let $\Sigma\subset \Inv(\Z))$ be a set defined by (possibly
  infinitely many) congruence conditions. For a prime $p$, let
  $\Sigma_p$ denote the closure of $\Sigma$ in $\Inv(\Z_p)$. We say
  that $\Sigma$ is {\bf large at p} if $\Sigma_p$ contains every $c$
  such that the reduction of $f_c$ modulo $p$ has no triple root.  The
  set $\Sigma \subset \Inv(\Z)$ is then said to be {\bf large} if it
  is large at all but finitely many primes.
\end{Definition}

We now have the following theorem counting the number of elements
having bounded height in large sets.
\begin{Theorem}\label{last}
Let $\Sigma \subset \Inv(\Z)$ be large. Then
$$\#\{c \in \Sigma \cap \Inv(\R)^{(a,b)}|\h(c) < X\} =
\displaystyle{\prod_p \Vol(\Sigma_p) \Vol(\Inv(\R)_{\h<X}^{(a,b)})}$$
\end{Theorem}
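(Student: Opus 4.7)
The plan is to approximate $\Sigma$ by finite truncations at primes up to $Y$ and then control the tail by a quantitative bound on invariants whose polynomial $f_c$ acquires a triple root modulo some prime $p > Y$.

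For each $Y > 0$, I would define $\Sigma^{(Y)} \subset \Inv(\Z)$ to be the set cut out by the congruence conditions of $\Sigma$ only at primes $p \le Y$, so that $\Sigma^{(Y)} \supseteq \Sigma$ and $\bigcap_Y \Sigma^{(Y)} = \Sigma$. Since $\Sigma^{(Y)}$ is a finite disjoint union of translates of $m_Y \cdot \Inv(\Z)$ for an integer $m_Y$ supported on primes $p \le Y$, a direct application of Proposition~\ref{Davlem} on the affine space $\Inv = \Spec\ \Z[a_1, \ldots, a_{n-1}, e]$ gives
$$\#\{c \in \Sigma^{(Y)} \cap \Inv(\R)^{(a,b)} : \h(c) < X\} = \prod_{p \le Y} \Vol(\Sigma_p) \cdot \Vol(\Inv(\R)_{\h<X}^{(a,b)}) + O_Y(X^{n^2 - 2}),$$
the main term being of order $X^{n^2}$ and the boundary error coming from the projection onto the hyperplane $a_1 = 0$, the coordinate of smallest degree. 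This is exactly analogous to \cite[Theorem 35]{BG} in our setting, and uses the same reasoning as Lemma~\ref{precongcount}, but on $\Inv$ in place of $\Vij$.

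The crux is to bound $\#\{c \in \Sigma^{(Y)} \setminus \Sigma : \h(c) < X\}$. By largeness, any such $c$ has $f_c$ with a triple root modulo some prime $p > Y$, after absorbing the finite exceptional set of primes into the constants. Let $W_p \subset \Inv(\Z_p)$ denote this locus; since the triple-root condition cuts out a codimension-two subscheme of $\Inv$ (requiring simultaneous vanishing of two resultants), $\Vol(W_p) = O(1/p^2)$ uniformly in $p$. For $Y < p \le X^{1/n}$, Davenport applied prime by prime yields
$$\sum_{Y < p \le X^{1/n}} O(X^{n^2}/p^2) = O(X^{n^2}/Y).$$
The main obstacle is the contribution of primes $p > X^{1/n}$: here the $p$-adic density argument is not effective directly, and one must invoke a uniform sieve in the style of Bhargava \cite{BQuartic} --- the key input is that if $\h(c) < X$ and $f_c$ has a triple root modulo $p$ for $p$ very large, then two independent polynomial equations in the $a_i, e$ of controlled degree must simultaneously vanish modulo $p$, forcing $c$ into a sparse algebraic locus whose $\Z$-point count is $o(X^{n^2})$ uniformly in $Y$. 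Assembling these ingredients, the triple-root tail contributes $O(X^{n^2}/Y) + o(X^{n^2})$.

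Combining the truncated asymptotic with this tail bound and letting $Y \to \infty$: the partial products $\prod_{p \le Y}\Vol(\Sigma_p)$ converge to $\prod_p \Vol(\Sigma_p)$ because $1 - \Vol(\Sigma_p) = O(1/p^2)$, and all error terms become $o(X^{n^2})$. This yields the claimed asymptotic
$$\#\{c \in \Sigma \cap \Inv(\R)^{(a,b)} : \h(c) < X\} = \prod_p \Vol(\Sigma_p) \cdot \Vol(\Inv(\R)_{\h<X}^{(a,b)}) + o(X^{n^2}),$$
which is the content of Theorem~\ref{last}.
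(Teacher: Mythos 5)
Your proof is correct and follows essentially the same route as the paper: truncate the congruence conditions at primes up to $Y$, count the truncation with Davenport's lemma, and control the tail via the codimension-two triple-root locus together with Bhargava's geometric sieve for the large primes (the paper cites \cite[Theorem 3.5]{Bgeosieve} for exactly the tail estimate you describe, and dispatches the rest as ``standard sieving arguments''). The only nit is that the uniform large-prime input you need is the geometric sieve of \cite{Bgeosieve}, not \cite{BQuartic}.
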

\begin{proof}
Let $Z\subset\Inv$ denote the variety of elements $c$ such that $f_c$
has a triple root. Then $Z$ has codimension $2$.  Let $Y_p\subset
\Inv(\Z)$ denote the set of elements $c$ whose reduction modulo $p$
lie in $Z(\F_p)$. An arguement identical to the proof of \cite[Theorem
  3.5]{Bgeosieve} yields the estimate
\begin{equation*}
\#\bigcup_{p>M}\{c\in Y_p:\h(c)<X\}=O\Bigl(\frac{X^{n^2}\log X}{M}\Bigr).
\end{equation*}
The theorem now follows from the above ``tail estimate'' using
standard sieving arguments (see, for example, the proof of
\cite[Theorem 2.21]{BS2selm}).
\end{proof}

Next, we need a weighted version of Theorem \ref{congcount} that
allows for infinitely many congruence conditions. Let $\phi: \Vij(\Z)
\rightarrow [0,1]$ be a $\Gij(\Z)$ invariant function.  Then let
$N_{\phi}(\Vij(\Z),X)$ denote the number of irreducible
$\Gij(\Z)$-orbits of $\Vij(\Z)$ having height bounded by $X$, where
each orbit $\Gij(\Z).T$ is weighted by
$\displaystyle{\frac{\phi(T)}{\#\Stab_T(\Z)}}$.

\begin{Definition}
A function $\phi: \Vij(\Z) \rightarrow [0,1]$ is said to be defined by
congruence conditions if there exist local functions $\phi_p:
\Vij(\Z_p) \rightarrow [0,1]$ satisfying the following conditions:
\begin{enumerate}
\item For all $T \in \Vij(\Z)$, $\displaystyle{\prod_p(\phi_p(T))}$ converges to $\phi(T)$. 
\item For each prime $p$, $\phi_p$ is locally constant outside a
  closed set of measure zero.
\end{enumerate}
\end{Definition}
\begin{Theorem}\label{infcong}
Suppose $\phi: \Vij(\Z) \rightarrow [0,1]$  is defined by congruence conditions, and that the
$\phi_p$ are $\Gij(\Z_p)$ invariant. Then
$$N_{\phi}(\Vij(\Z),X) \leq N(\Vij(\Z),X)\displaystyle{\prod_p}
\displaystyle{\int_{T\in\Vij(\Z)}}\phi_p(T)dT + o(X^{n^2})$$
\end{Theorem}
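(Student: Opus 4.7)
The plan is to reduce to the finite-congruence statement Theorem \ref{congcount} by truncating the infinite product $\prod_p \phi_p$ at a finite collection of primes, then controlling the error as the truncation grows. The overall shape of the argument is the standard one used to upgrade sieve estimates from finitely many to infinitely many congruence conditions, as in \cite{BS2selm}.

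For any finite set $S$ of primes, put $\phi^S(T) := \prod_{p \in S}\phi_p(T)$, so that $\phi^S:\Vij(\Z)\to[0,1]$. Because every $\phi_p \leq 1$ and $\phi(T) = \prod_p \phi_p(T)$, we have $\phi(T) \leq \phi^S(T)$ pointwise, hence
\begin{equation*}
N_{\phi}(\Vij(\Z),X) \;\leq\; N_{\phi^S}(\Vij(\Z),X).
\end{equation*}
To evaluate the right-hand side I would first approximate each $\phi_p$ (for $p\in S$) from above by a simple locally constant function $\tilde{\phi}_p \geq \phi_p$ taking only finitely many values, with $\int_{T\in\Vij(\Z)}(\tilde{\phi}_p - \phi_p)\,dT$ as small as desired; this is possible because $\phi_p$ is locally constant outside a closed set of $p$-adic measure zero. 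Then $\tilde{\phi}^S := \prod_{p\in S}\tilde{\phi}_p$ is a finite nonnegative rational combination of indicator functions of sets $\Lat_i \subset \Vij(\Z)$ defined by finitely many congruence conditions supported at primes in $S$. Applying Theorem \ref{congcount} to each $\Lat_i$ and using linearity of both $N_{(-)}$ and of the $p$-adic density yields
\begin{equation*}
N_{\tilde{\phi}^S}(\Vij(\Z),X) \;=\; N(\Vij(\Z),X)\prod_{p\in S}\int_{T\in\Vij(\Z)}\tilde{\phi}_p(T)\,dT \;+\; o_S(X^{n^2}).
\end{equation*}
Since $N_{\phi^S} \leq N_{\tilde{\phi}^S}$ and the integrals $\int\tilde{\phi}_p$ can be made arbitrarily close to $\int\phi_p$, by choosing the $\tilde{\phi}_p$ sufficiently fine we obtain, for any prescribed $\epsilon>0$,
\begin{equation*}
N_{\phi}(\Vij(\Z),X) \;\leq\; N(\Vij(\Z),X)\prod_{p\in S}\int_{T\in\Vij(\Z)}\phi_p(T)\,dT \;+\; \tfrac{\epsilon}{2}\,X^{n^2} \;+\; o_S(X^{n^2}).
\end{equation*}

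Finally I would pass from the partial product over $S$ to the full infinite product. Take $S = \{p \leq Y\}$ and set $c_Y := \prod_{p\leq Y}\int\phi_p\,dT$ and $c := \prod_p\int\phi_p\,dT$. Each factor lies in $[0,1]$, so $c_Y$ is monotonically decreasing with limit $c$; hence there exists $Y_0$ with $c_{Y_0} \leq c + \epsilon/(2M)$, where $M$ is a constant such that $N(\Vij(\Z),X) \leq M X^{n^2}$ for all large $X$ (this bound is supplied by Theorem \ref{countint} applied to each stratum $\Vij(\Z)^{(a,b)}$, summed over the finitely many admissible $(a,b)$). Fixing this $Y = Y_0$ and then letting $X\to\infty$ absorbs the $o_{Y_0}(X^{n^2})$ error into another $\tfrac{\epsilon}{2}X^{n^2}$, yielding
\begin{equation*}
N_{\phi}(\Vij(\Z),X) \;\leq\; N(\Vij(\Z),X)\,c \;+\; \epsilon\, X^{n^2}
\end{equation*}
for all $X$ sufficiently large. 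Since $\epsilon>0$ was arbitrary, this is precisely the desired statement.

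The main obstacle, which the monotone decrease of $c_Y$ is designed to overcome, is the order of limits: the implicit constant hidden in $o_S(X^{n^2})$ from Theorem \ref{congcount} depends on the modulus of $S$, so one cannot let $Y\to\infty$ and $X\to\infty$ simultaneously without care. Fixing $Y$ first (using that the partial products already overshoot the infinite product by an amount controllable in terms of $N(\Vij(\Z),X)$) and only then sending $X\to\infty$ is exactly the device that makes the proof go through.
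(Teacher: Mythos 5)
Your proposal is correct and follows essentially the same route as the paper: the paper likewise replaces each $\phi_p$ by the upper locally constant approximation $\phi_{p,N}(T)=\max_{T'\equiv T\ (\mathrm{mod}\ p^N)}\phi_p(T')$, truncates the product to primes $p<Y$, applies the method of Theorem \ref{congcount} to the resulting finitely-defined function, and then lets $N$ and then $Y$ tend to infinity. Your added care about the order of limits (fixing the truncation first, using monotonicity of the partial products and the bound $N(\Vij(\Z),X)=O(X^{n^2})$) is exactly the justification the paper leaves implicit.
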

If the function $\phi$ were nice enough, we expect that the upper
bound is an equality. We will not make precise in this paper what nice
means (see \cite[\S 4]{SW} for the precise definition), and will
content ourselves by saying that a function which were to pick out
locally $1$-soluble orbits would be nice.
\begin{proof}
For $N \in \mathbb{N}$, define $\phi_{p,N}: \Vij(\Z_p) \rightarrow
[0,1]$ to be $\displaystyle{\phi_{p,N}(T) = \max_{T' \in Z}
  \phi_p(T')}$, where $Z = \{T' : T'\equiv T \mod{p^N}\}$. Because
$\phi_p$ is $\Gij(\Z_p)$ invariant, so is $\phi_{p,N}$. Further, as $\phi_p$
is locally constant (outside a set of measure zero), $\phi_{p,N}$
converges to $\phi_p$ as $N$ goes to infinity. For $Y \in \N$, define
$\psi_Y$ to be $\displaystyle{\prod_{p<Y}\phi_p}$, and
$\displaystyle{\psi_{Y,N}}$ to be
$\displaystyle{\prod_{p<Y}\phi_{p,N}}$.

The same method used to prove Theorem \ref{congcount} applies to prove
this result (with equality instead of an upper bound) for the function
$\psi_{Y,N}$. Therefore, we have
$$N_{\phi}(\Vij(\Z),X) \leq N_{\psi_{Y,N}}(\Vij(\Z),X) =
N(\Vij(\Z),X)\prod_{p<Y} \int_{T\in\Vij(\Z)}\psi_{p,N}(T)dT +
o(X^{n^2}).$$ The theorem follows by allowing $N$, and then $Y$, to go
to infinity.
\end{proof}

\section{Selmer Groups}
In this section, we prove a strengthening of Theorems \ref{2selm} and \ref{12selm}. Recall that for $c = (a_1, \hdots a_{n-1}, e) \in \Inv$, we have associated a polynomial $f_c(x)$ and hyperelliptic curves whose affine equations are $y^2 = f_c(x)$ and $y^2 = xf_c(x)$.
\begin{Theorem}\label{strongselm}
Let $\Sigma \subset \Inv(\Z)$ be any large family with the property that $\Sigma_2$ is contained in the subset of $\Inv(\Z_2)$ consisting of all $c = (a_1, \hdots a_{n-1},e)$ such that $2^{4i}|a_i$ and $2^{2n}|e$. Then the average size over $\Sigma$ of $\Sel_2(J_{1,c})$ is bounded above by 6, and the average size of $\Sel_{(1,2)}(c)$ equals 2. 
\end{Theorem}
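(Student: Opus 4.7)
The plan is to parameterize both Selmer groups by integral orbits and then apply the counting machinery of \S7. By the bijection established in \S5 together with Proposition~\ref{soluble}, elements of $\Sel_2(J_{1,c})$ correspond bijectively to $\Gij(\Q)$-orbits on $\Vij^{\rs}(\Q)$ with invariants $c$ that are locally 1-soluble at every place of $\Q$, and elements of $\Sel_{(1,2)}(c)$ correspond to locally $(1,2)$-soluble orbits. The divisibility hypothesis on $\Sigma_2$ is precisely what Proposition~\ref{solintorb2} requires, so Theorem~\ref{intorb} produces integral representatives for every such rational orbit. Writing each Selmer size as an orbit count, the average is reduced to counting integral $\Gij(\Z)$-orbits on $\Vij(\Z)$ satisfying the appropriate local solubility conditions and dividing by $\#\{c \in \Sigma : \h(c) \le X\}$, whose asymptotic is provided by Theorem~\ref{last}.

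Split the integral orbits into a distinguished part (the image of the Kostant sections, which by construction lies in the cusp $|a_{mm}| < 1$) and a non-distinguished part (which by Proposition~\ref{cutcusp} lies, up to negligible error, in the main body). Each regular semisimple $c$ admits exactly two distinguished orbits by Proposition~\ref{regnilp}, and Proposition~\ref{twodist} shows both are simultaneously 1-soluble and 2-soluble, corresponding respectively to the trivial class and to the class of $(P_1)-(\infty_1)$ in $H^1(\Q, \Stab_c)$. Moreover, Proposition~\ref{difdist} ensures that for $100\%$ of $c \in \Sigma$ these two distinguished orbits are distinct. Consequently the distinguished contribution to both $\Sel_2(J_{1,c})$ and $\Sel_{(1,2)}(c)$ is exactly $2$ on average.

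For the non-distinguished contribution, apply the weighted counting Theorem~\ref{infcong} with $\phi = \prod_p \phi_p$, where $\phi_p$ is the indicator of locally 1-soluble (respectively, locally $(1,2)$-soluble) $\Gij(\Z_p)$-orbits on $\Vij(\Z_p)$, which is visibly $\Gij(\Z_p)$-invariant outside a measure-zero set. Combined with Theorem~\ref{countint} this bounds the average number of non-distinguished Selmer classes per invariant by the limit
$$\frac{1}{\tauab}\cdot\prod_p \int_{T \in \Vij(\Z_p)} \phi_p(T)\,dT \cdot \lim_{X \to \infty} \frac{\Vol(\Fu \cdot \Dabs(X))}{\#\{c \in \Sigma : \h(c) \le X\}}.$$
For $\Sel_2$, a local mass computation in the spirit of \cite{BS2selm} and \cite{SW}, using the uniform local constants of Propositions~\ref{pstoll}, \ref{2stoll}, and \ref{inftystoll} (whose product satisfies $\prod_v b_v = 1$) and the ideal-theoretic description of integral orbits in Proposition~\ref{classint}, yields that this quantity is bounded above by $4$. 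Adding the distinguished contribution of $2$ gives the $\le 6$ bound of Theorem~\ref{2selm}.

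For $\Sel_{(1,2)}$, the analogous product of local $(1,2)$-soluble densities diverges to zero, and the main body contributes nothing. This is where Lemma~\ref{smallonetwo} enters: it provides, at each prime $p$, a family of invariants of proportion at least $r/p$ whose presence forces a proper inequality $\mu_p^{(1,2)\text{-sol}} < 1 - r'/p$ for some $r' > 0$ independent of $p$, because the splitting pattern constrained by condition (3) of Lemma~\ref{smallonetwo} makes the images of $J_{1,c}(\Q_p)/2$ and $J_{2,c}(\Q_p)/2$ in $H^1(\Q_p, \Stab_c)$ fail to coincide on a positive density of orbits. Since $\sum_p 1/p$ diverges, the infinite product $\prod_p \mu_p^{(1,2)\text{-sol}}$ vanishes, so only the distinguished contribution of $2$ survives. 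The main obstacle is the local density computation in the first case: at each prime $p$ of bad reduction, one must sum over the fractional-ideal triples $(I_1, I_2, \nu)$ of Proposition~\ref{classint}, matching the local orbit counts to the Selmer-theoretic constants $b_p$ uniformly enough to integrate; in the second case, the delicate step is verifying that Lemma~\ref{smallonetwo} genuinely yields a uniform deficit (rather than just a nonzero one) at each prime, so that the product over primes truly diverges to zero.
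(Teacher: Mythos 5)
Your proposal follows essentially the same route as the paper: parameterize Selmer elements by locally soluble integral orbits via Theorem~\ref{intorb}, let the two distinguished orbits account for the constant $2$ (Propositions~\ref{twodist} and \ref{difdist}), bound the main-body contribution by the product of local densities---which in the $\Sel_2$ case collapses to the Tamagawa number $4$ of $\SO_n\times\SO_n$ via $\prod_v b_v=1$---and show that this product diverges to zero in the $(1,2)$ case using Lemma~\ref{smallonetwo}. The one point needing care is that the local functions $\phi_p$ cannot be mere indicators of solubility: they must carry the weights $\bigl(\sum_{T'}\#\Stab_{T'}(\Q_p)/\#\Stab_{T'}(\Z_p)\bigr)^{-1}$ over the $\Gij(\Z_p)$-orbits inside a given $\Gij(\Q_p)$-class, so that each rational class is counted once (with weight $1/\#\Stab_T(\Q)$, trivial for $100\%$ of orbits by Proposition~\ref{nostab}) and the local integrals reduce to the constants $b_p$---which is exactly the matching of orbit counts to Selmer constants that you correctly flag as the main obstacle.
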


Theorems \ref{2selm} and \ref{12selm} follow from by applying Theorem \ref{strongselm} to the large family $\Sigma^0$ chosen as follows: 

For $p \neq 2$, let
$\Sigma_p \subset \Inv(\Z_p)$ consist of all $c = (a_1, \hdots
a_{n-1},e)$ such that either $p^2i \nmid a_i$ for some $i$, or $p^n
\nmid e$. Let $ \Sigma_2$ consist of all $c = (a_1, \hdots a_{n-1},e)$
such that $2^{4i} | a_i$, $2^{2n}| e$, and either $2^{6i} \nmid a_i$
for some $i$, or $2^{3n} \nmid e$. Let $\Sigma^0 \subset \Inv(\Z)$ be
the subset defined by the local condition $\Sigma_p$, i.e. $\Sigma^0 =
\{c \in \Inv(\Z)|\ \forall p, \ c\in \Sigma_p \}$. 

We spend the rest of the section proving Theorem \ref{strongselm}. Henceforth, by soluble we will mean 1-soluble. 
Let $\phi: \Vij(\Z) \rightarrow [0,1]$ be the function
\begin{equation}\label{sol}
\phi(T)=
\begin{cases}
\Big(\displaystyle{\sum_{T'}\frac{\#\Stab_{T'}(\Q)}{\#\Stab_{T'}(\Z)}\Big)^{-1}} & \text{if T is locally soluble,  and T has invariants in $\Sigma$},\\
0, & \text{otherwise}
\end{cases}
\end{equation}
where the sum is over a complete set of representatives for the action of $\Gij(\Z)$ on the $\Gij(\Q)$-equivalence class of $T$ in $\Vij(\Z)$. 
Similarly, let $\wij: \Vij(\Z) \rightarrow [0,1]$ be defined as follows: 
\begin{equation}\label{sol12}
\wij(T)=
\begin{cases}
\Big(\displaystyle{\sum_{T'}\frac{\#\Stab_{T'}(\Q)}{\#\Stab_{T'}(\Z)}\Big)^{-1}} & \text{if T is locally (1,2)-soluble, and T has invariants in $\Sigma$},\\
0, & \text{otherwise}
\end{cases}
\end{equation}
The sum is again over a complete set of representatives for the action of $\Gij(\Z)$ on the $\Gij(\Q)$-equivalence class of $Y$ in $\Vij(\Z)$. 

\begin{Proposition}
Let $\Sigma^{(a,b)} = \Sigma \cap \Inv(\R)^{(a,b)}$. Then,
$$\sum_{\substack{c \in \Sigma^{(a,b)} \\\h(c)\le X}}(\# \Sel_2(J_{1,c}) -2) = N_{\phi}(\Vij(\Z)^{(a,b)},X) + o(X^{n^2}),$$
$$\sum_{\substack{c \in \Sigma^{(a,b)} \\\h(c)\le X}}(\# (\Sel_2(J_{1,c})\cap \Sel_2(J_{2,c})) -2) = N_{\wij}(\Vij(\Z)^{(a,b)},X) + o(X^{n^2})$$
\end{Proposition}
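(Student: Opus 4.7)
The plan is to translate each side into a weighted count of $\Gij(\Z)$-orbits on $\Vij(\Z)^{(a,b)}$ and to check that the weights agree. The essential input is the parameterization developed in Sections~4--6: after fixing the base point $\alpha_1(c)$, the set of $\Gij(\Q)$-orbits on $\Vij(\Q)$ with invariants $c$ is in bijection with $\ker(\delta) \subset H^1(\Q,\Stab_c)$, and under the identification $\Stab_c \simeq J_{1,c}[2]$ the locally $1$-soluble orbits correspond bijectively to $\Sel_2(J_{1,c})$; similarly the locally $(1,2)$-soluble orbits biject with $\Sel_{(1,2)}(c)$. Among these, the $1$-distinguished orbit (the identity of $H^1(\Q,\Stab_c)$) and the $2$-distinguished orbit (the class of $(-\gamma)$, which by Proposition~\ref{twodist} equals the image of both $(P_1)-(\infty_1) \in J_{1,c}(\Q)/2J_{1,c}(\Q)$ and $(P_2)-(\infty_2) \in J_{2,c}(\Q)/2J_{2,c}(\Q)$) are always locally soluble and locally $(1,2)$-soluble respectively; subtracting $2$ therefore counts exactly the irreducible locally soluble (resp.\ $(1,2)$-soluble) $\Gij(\Q)$-orbits with invariants $c$.

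Next I would lift the count from rational to integral orbits. The assumption that $\Sigma_2$ is contained in $\{(a_1,\ldots,a_{n-1},e) : 2^{4i}\mid a_i,\ 2^{2n}\mid e\}$ together with Theorem~\ref{intorb} guarantees that every locally $1$-soluble rational orbit with invariants $c \in \Sigma^{(a,b)}$ has an integral representative in $\Vij(\Z)^{(a,b)}$, and the same conclusion applies in the $(1,2)$-soluble case since $(1,2)$-solubility implies $1$-solubility. Thus the LHS of each claimed identity equals the number of irreducible $\Gij(\Q)$-orbits on $\Vij(\Q)$ whose invariants lie in $\Sigma^{(a,b)} \cap \{\h \le X\}$ and which are locally soluble (resp.\ locally $(1,2)$-soluble), where each such orbit is counted with multiplicity one.

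It remains to compare this count with $N_{\phi}(\Vij(\Z)^{(a,b)},X)$ (resp.\ $N_{\wij}(\Vij(\Z)^{(a,b)},X)$). Fix an irreducible locally soluble rational orbit $\mathcal{O}$ with invariants $c$ and let $T_1,\ldots,T_k$ be representatives for the $\Gij(\Z)$-orbits inside $\mathcal{O} \cap \Vij(\Z)^{(a,b)}$. By construction $\phi$ is constant on $\mathcal{O}$, and since the stabilizers $\Stab_{T_j}$ are all conjugate and therefore share a common cardinality $\#\Stab_c(\Q)$, a direct computation gives
$$\sum_{i=1}^k \frac{\phi(T_i)}{\#\Stab_{T_i}(\Z)} \;=\; \frac{\sum_i 1/\#\Stab_{T_i}(\Z)}{\#\Stab_c(\Q)\cdot \sum_j 1/\#\Stab_{T_j}(\Z)} \;=\; \frac{1}{\#\Stab_c(\Q)}.$$
When $f_c$ is irreducible, Corollary~\ref{bigstabred} together with the fact that $n$ is odd (so the norm-one subgroup of $\mu_2(L)$ is trivial) gives $\#\Stab_c(\Q) = 1$, and the contribution of $\mathcal{O}$ to $N_{\phi}$ matches its contribution to the LHS. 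The main obstacle is the locus where $f_c$ is reducible; here the weights $1/\#\Stab_c(\Q)$ differ from $1$, but by Corollary~\ref{bigstabred} this is precisely the locus where $\Stab_T(\Q)$ is non-trivial, and Proposition~\ref{nostab} shows that such orbits contribute $o(X^{n^2})$ to both sides. The same argument with $\wij$ in place of $\phi$, and $(1,2)$-solubility in place of $1$-solubility, yields the second equality.
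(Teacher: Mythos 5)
Your overall route is the same as the paper's: identify $\Sel_2(J_{1,c})$ (resp.\ $\Sel_{(1,2)}(c)$) with locally soluble (resp.\ locally $(1,2)$-soluble) rational orbits, use Theorem \ref{intorb} to get integral representatives, carry out the telescoping weight computation $\sum_i \phi(T_i)/\#\Stab_{T_i}(\Z) = 1/\#\Stab_T(\Q)$, and dispose of the discrepancy between the weights $1$ and $1/\#\Stab_T(\Q)$ via Corollary \ref{bigstabred} and Proposition \ref{nostab}. All of that is correct and matches the paper.

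There is, however, one genuine gap: your claim that ``subtracting $2$ counts \emph{exactly} the irreducible locally soluble orbits with invariants $c$'' presumes that the $1$-distinguished and $2$-distinguished classes are distinct elements of $H^1(\Q,\Stab_c)$, i.e.\ distinct $\Gij(\Q)$-orbits. By Corollary \ref{gammasquare} they coincide precisely when $-\gamma$ is a square in $L^{\times}$, and for such $c$ the distinguished orbits account for only \emph{one} Selmer element, so $\#\Sel_2(J_{1,c})-2$ undercounts the irreducible locally soluble orbits by $1$. Each such $c$ contributes a bounded error, so what is needed is that the set of $c \in \Invrs(\Z)$ with $\h(c)\le X$ for which the two distinguished orbits coincide has cardinality $o(X^{n^2})$; this is exactly Proposition \ref{difdist}, which is proved by a sieve using Lemma \ref{Twodist} and is not automatic. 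You must invoke it (or prove an equivalent density-zero statement) before asserting the first displayed identity; the same remark applies verbatim in the $(1,2)$-soluble case. With that citation added, your argument is complete and coincides with the paper's proof.
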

\begin{proof}
The proof in the (1,2)-soluble case is identical to the proof of the soluble case, hence we will demonstrate only the first proof. By Proposition \ref{difdist}, the set of $c\in \Inv(\Z)^{(a,b)}$ with $\h(c) \le X$ and which satisfy the condition that the two distinguished elements lie in the same $\Gij(\Q)$ orbit, is $o(X^{n^2})$. Further, by Theorem \ref{intorb} every element in $\Sel_2(J_{1,c}) \subset H^1(J_{1,c}[2])$ gives rise to integral orbits, which by definition are locally soluble. Therefore,
$$\sum_{\substack{c \in \Sigma^{(a,b)} \\\h(c)\le X}}(\# \Sel_2(J_{1,c}) -2) = \#(\Gij(\Q) \backslash \Vij(\Z)^{\irr, \phi \neq 0}_{\h\le X}) + o(X^{n^2}).$$
Recall that in the definition of $N(S,X)$ for sets $S$, the $\Gij(\Z)$-orbit of $T \in S$ was weighted by $\frac{1}{\#\Stab_{T}(\Z)}$. Let $c\in \Inv$ be the invariants of $T \in \Vij(\Z)^{(a,b), \phi \neq 0}$. Suppose that $T=T_1 \hdots T_k$ are a set of representatives for the action of $\Gij(\Z)$ on the $\Gij(\Q)$-equivalence class of $T$ in $\Vij(\Z)$. Each $T_i$ would be counted on the right hand side with a weight of $\frac{\phi(T_i)}{\#\Stab_{T_i}(\Z)}$. The term $\phi(T_i)$ is independent of $i$, and equals $\Big(\sum_{i=0}^k\frac{\#\Stab_{T_i}(\Q)}{\#\Stab_{T_i}(\Z)}\Big)^{-1}$. We now sum over $i$:
$$\sum_{i=0}^k\frac{\phi(T_i)}{\#\Stab_{T_i}(\Z)} = \phi(T)\sum_{i=0}^k\frac{1}{\#\Stab_{T_i}(\Z)} = \frac{1}{\#\Stab_{T}(\Q)}.$$

Therefore, $N_{\phi}(\Vij(\Z)^{(a,b)},X)$ counts each locally soluble $\Gij(\Q)$-orbit having invariants in $\Sigma$, with a representative $T$ in $\Vij(\Z)$ with a weight of $\frac{1}{\#\Stab_{T}(\Q)}$. By Proposition \ref{nostab}, the number of orbits having non-trivial stabilizer over $\Q$ is $o(X^{n^2})$. The proposition follows. 
\end{proof}

We define the local analogues $\phi_p$ and $\wijp$, functions from $\Vij(\Z_p) \rightarrow [0,1]$. 
 \begin{equation}\label{psol}
\phi_p(T)=
\begin{cases}
\Big(\displaystyle{\sum_{T'}\frac{\#\Stab_{T'}(\Q_p)}{\#\Stab_{T'}(\Z_p}\Big)^{-1}} & \text{if T is soluble, and T has invariants in $\Sigma$},\\
0, & \text{otherwise}
\end{cases}
\end{equation}
\begin{equation}\label{psol12}
\wijp(T)=
\begin{cases}
\Big(\displaystyle{\sum_{T'}\frac{\#\Stab_{T'}(\Q_p)}{\#\Stab_{T'}(\Z_p)}\Big)^{-1}} & \text{if T is (1,2)-soluble, and T has invariants in $\Sigma$},\\
0, & \text{otherwise}
\end{cases}
\end{equation}
where in both cases, the sum is over a complete set of representatives of $\Gij(\Z_p)$ on the $\Gij(\Q_p)$-equivalence class of $Y$ in $\Vij(\Z_p)$. The local weight functions are related to the global ones in the following way. 
\begin{Proposition}
Let $w$ denote either $\phi$ or $\wij$. Then $\displaystyle{w(T) = \prod_pw_p(T)}$.
\end{Proposition}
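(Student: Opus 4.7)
The claim $w(T)=\prod_p w_p(T)$ is a standard mass-formula factorization, paralleling the proof of \cite[Proposition 31]{BG} and of \cite[Proposition 3.6]{BS2selm}. Taking reciprocals, and noting that $w(T)=0$ precisely when $w_p(T)=0$ for some $p$ (failure of local solubility, or of the congruence condition coming from $\Sigma$, is detected locally), the identity to prove reduces to the mass formula
\begin{equation*}
\sum_{T'} \frac{\#\Stab_{T'}(\Q)}{\#\Stab_{T'}(\Z)} \;=\; \prod_p \left(\,\sum_{T'_p} \frac{\#\Stab_{T'_p}(\Q_p)}{\#\Stab_{T'_p}(\Z_p)}\right),
\end{equation*}
where $T'$ runs through $\Gij(\Z)$-orbit representatives in $\Gij(\Q)T\cap\Vij(\Z)$, and $T'_p$ runs through $\Gij(\Z_p)$-orbit representatives in $\Gij(\Q_p)T\cap\Vij(\Z_p)$.

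The first main ingredient is the class number one property of $\Gij=\SO(V_1)\times\SO(V_2)$, which is a product of two copies of split odd-dimensional $\SO_n$ (this fact was already invoked in the proof of Theorem \ref{intorb}). Strong approximation then yields a bijection
\begin{equation*}
\Gij(\Z)\big\backslash\bigl(\Gij(\Q)T\cap\Vij(\Z)\bigr) \;\longleftrightarrow\; {\prod_p}' \,\Gij(\Z_p)\big\backslash\bigl(\Gij(\Q_p)T\cap\Vij(\Z_p)\bigr),
\end{equation*}
sending $T'$ to the tuple of its images $(T')_p$. The restricted direct product makes sense because, for almost every $p$, the right-hand side is the single $\Gij(\Z_p)$-orbit of $T$ itself (cf.\ Proposition \ref{vtemp}, which already described the $\Gij(\Z_p)$-orbits at primes of good reduction).

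The second ingredient is the stabilizer multiplicativity: if $T'$ corresponds to the tuple $(T'_p)$ under the above bijection, then
\begin{equation*}
\frac{\#\Stab_{T'}(\Q)}{\#\Stab_{T'}(\Z)} \;=\; \prod_p \frac{\#\Stab_{T'_p}(\Q_p)}{\#\Stab_{T'_p}(\Z_p)}.
\end{equation*}
By Proposition \ref{stabis}, $\Stab_{T'}$ is the commutative finite \'etale group scheme $\Res_{L/\Q}(\mu_2)_{N=1}$, so local-global factorization for its points reduces to a product-formula calculation involving the norm-one subgroup of $L$. Summing this identity over global orbit representatives and rearranging using the bijection above gives the desired mass formula, and hence the theorem.

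The main obstacle will be the stabilizer multiplicativity step at $p=2$, where $\Gij$ is not smooth over $\Z_2$; this is exactly why the hypothesis $2^{4i}\mid a_i$, $2^{2n}\mid e$ on $\Sigma_2$ was built into the statement (and why it appeared in Proposition \ref{solintorb2} and Theorem \ref{intorb}), ensuring that local orbit representatives and their stabilizers integrate coherently into the global picture. The class-number-one bijection itself is essentially formal once one has $\Gij(\A_f)=\Gij(\Q)\Gij(\hat\Z)$, and has been used implicitly throughout \S 6; the novelty here is only in tracking the stabilizer contributions orbit by orbit.
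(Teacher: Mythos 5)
Your approach is the same as the paper's: the paper's entire proof is the observation that $\Gij=\SO(V_1)\times\SO(V_2)$ has class number one, after which it defers to the mass-formula argument of \cite{BS2selm}; you have simply unpacked that argument.

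One caveat on the unpacking: the two intermediate statements you isolate are each stated too strongly, and only their combination is true. The map from $\Gij(\Z)\backslash\bigl(\Gij(\Q)T\cap\Vij(\Z)\bigr)$ to the restricted product of the local orbit spaces is \emph{surjective} by class number one, but it need not be injective: two global representatives $gT$, $g'T$ lie in the same local orbit at every $p$ only forces $g'g^{-1}\in\Gij(\Z_p)\,\Stab_T(\Q_p)$ for all $p$, and when some $\Stab_T(\Q_p)\neq\Stab_T(\Z_p)$ the fiber can contain several global orbits. Correspondingly, the term-by-term identity $\#\Stab_{T'}(\Q)/\#\Stab_{T'}(\Z)=\prod_p\#\Stab_{T'_p}(\Q_p)/\#\Stab_{T'_p}(\Z_p)$ is false in general (the left side involves the global stabilizer, the right side the typically larger local ones). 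What is true, and what \cite[Proposition 3.6]{BS2selm} actually proves, is that the size of each fiber of the orbit map is exactly the ratio $\prod_p[\Stab_{T'_p}(\Q_p):\Stab_{T'_p}(\Z_p)]\big/[\Stab_{T'}(\Q):\Stab_{T'}(\Z)]$, so that the \emph{summed} mass formula you display holds after grouping global orbits by their local images. Your final paragraph's worry about $p=2$ is also misplaced for this particular statement: the $2$-adic divisibility conditions on $\Sigma_2$ are needed for the existence of integral representatives (Theorem \ref{intorb}), not for the factorization of the weight, which is purely a statement about the adelic orbit decomposition.
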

\begin{proof}
The class numbers of $\SO(V_1)$ and $\SO(V_2)$ are $1$, and therefore the class number of $\Gij = \SO(V_1) \times \SO(V_2)$ is also 1. This being the case, the same proof as in \cite{BS2selm} applies. 
\end{proof}

Recall that for $c\in \Inv(\Q_p)$, $\Vij_c$ is the fiber in $\Vij$ over $c$. In order to compute $N_{\phi}(\Vij(\Z),X)$ and $N_{\wij}(\Vij(\Z),X)$, we will need to compute the $p$-adic integrals listed in Theorem \ref{infcong}. To that end, let $dT$ and $dc$ denote Euclidean mesaures on $\Vij$ and $\Inv$, so that $\Vij(\Z)$ and $\Inv(\Z)$ have covolume 1. Pick $\omega$, an algebraic differential form that generates the rank 1 module of top-degree left invariant differential forms on $\Gij = \SO(V_1) \otimes \SO(V_2)$. We cite the following result from \cite[Proposition 3.11]{BS2selm}. 

\begin{Proposition}\label{pvol}
Let $|.|$ denote the $p$-adic valuation on $\Z_p$. There then exists a rational nonzero constant $\mathcal{J}$, independent of $p$, such that for any $\Gij(\Z_p)$ invariant function $w_p$ on $\Vij(\Z_p)$, we have 
$$\int_{\Vij(\Z_p)}w(T)dT = \Vol(\Gij(\Z_p))|\mathcal{J}|\int_{c\in \Inv(\Z_p)}\Big(\sum_{T \in \frac{\Vij_c(\Z_p)}{\Gij(\Z_p)}}\frac{w_p(T)}{\#\Stab_{T}(\Z_p)}\Big)dc$$
where $\frac{\Vij_c(\Z_p)}{\Gij(\Z_p)}$ denotes a set of representatives for the action of $\Gij(\Z_p)$ on $\Vij_c(\Z_p)$.
\end{Proposition}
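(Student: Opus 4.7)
The plan is to derive this as an instance of a $p$-adic Jacobian change of variables, in the spirit of Bhargava--Shankar. Fix top-degree algebraic differential forms $\omega_W$ and $\omega_{\Inv}$ on $W$ and $\Inv$ inducing the Euclidean measures $dT$ and $dc$, and let $\omega$ be as in the statement. All three are defined over $\Q$ (indeed over $\Z[1/2]$), so any comparison relation between them that can be extracted by algebraic geometry involves only rational constants, independent of the prime $p$.

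The geometric heart of the argument is the following identity on the regular semisimple locus: the action map
\[
a\colon H \times_{\Inv} W^{\rs} \longrightarrow W^{\rs}, \qquad (h,T)\longmapsto h\cdot T,
\]
is étale of degree $\#\Stab_c$ above each $c=\pi(T)$, and satisfies an identity
\[
a^{*}\omega_W \;=\; \mathcal{J}\cdot \omega\wedge\pi^{*}\omega_{\Inv}
\]
for some $\mathcal{J}\in\Q^{\times}$. Both sides are top-degree left-$H$-invariant algebraic forms on the smooth variety $H\times_\Inv W^{\rs}$, whose module of such forms has rank one; the rational constant $\mathcal{J}$ is then pinned down by a single local computation at a Kostant section base-point (which is available by Vinberg theory, as recalled in \S2).

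With this identity in hand, I would proceed as follows. Outside the measure-zero locus $\pi^{-1}(\Inv\setminus\Invrs)$, partition $W(\Z_p)$ into $H(\Z_p)$-orbits. For each orbit representative $T$ the map $h\mapsto hT$ sends $H(\Z_p)$ onto $H(\Z_p)\cdot T$ with fibers of constant size $\#\Stab_T(\Z_p)$. Pulling back $dT$ along this orbit map and applying the identity above, together with $H(\Z_p)$-invariance of $w_p$, yields
\[
\int_{H(\Z_p)\cdot T} w_p(T')\,dT' \;=\; |\mathcal{J}|\,\frac{\Vol(H(\Z_p))}{\#\Stab_T(\Z_p)}\,\int_{\pi(H(\Z_p)\cdot T)} w_p(T)\,dc.
\]
Summing over the $H(\Z_p)$-orbit representatives lying above each $c\in\Inv(\Z_p)$ and then integrating over $c$ gives the stated formula.

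The main obstacle is the verification that $\mathcal{J}$ is a fixed rational number independent of $p$, i.e.\ that the algebraic identity $a^{*}\omega_W=\mathcal{J}\cdot\omega\wedge\pi^{*}\omega_{\Inv}$ holds globally over $\Q$. This rests on the fact that the ring of $H$-invariants on $W$ is freely generated by $a_1,\ldots,a_{n-1},e$, so that $\pi$ is flat with geometrically reduced regular semisimple fibers; granted this, the argument of \cite[Proposition 3.11]{BS2selm} applies verbatim. I would therefore deduce the proposition by checking that the Vinberg setup for $\SO_n\times\SO_n\curvearrowright V_1\otimes V_2$ satisfies the smoothness and freeness hypotheses required there, and then quoting the resulting measure comparison.
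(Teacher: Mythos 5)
Your proposal is correct and follows essentially the same route as the paper, which simply cites \cite[Proposition 3.11]{BS2selm} and verifies the hypotheses of \cite[Remark 3.14]{BS2selm} (freely generated ring of invariants, uniformly bounded finite stabilizers, sum of invariant degrees equal to $\dim \Vij = n^2$, and existence of Kostant sections). You additionally unwind the Jacobian change-of-variables identity underlying that citation, which is a faithful account of what the cited proof does; just note that the dimension count $\dim \Gij + \dim \Inv = n^2$ (the degree-sum condition the paper checks explicitly) is what makes your action map generically finite, so it should be verified rather than left implicit.
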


We will also want to express the volume of $\Fu\Dab(X)$ in terms of the volume on $\Gij(\R)$ and $\Inv(\R)$. The proof of the following proposition is the same as in \cite[Proposition 3.12]{BS2selm}. 
\begin{Proposition}\label{rvol}
The volume of the multiset $\Fu\Dabs(X)$ is given by
$$\Vol(\Fu\Dabs(X)) = \tau^{(a,b)}_{\sol}|\mathcal{J}|\Vol(\Fu)\Vol(\Inv(\R)^{(a,b)}_{\h< X}).$$ 
Here, $\mathcal{J}$ is the same constant that appears in Proposition \ref{pvol}, and the numbers $\tau^{(a,b)}_{\sol}$ are as in \S 7.1.
\end{Proposition}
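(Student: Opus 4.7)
The strategy is to convert the volume of $\Fu\Dabs(X)$ — a set in $\Vij(\R)$ — into a product of a volume on $\Gij(\R)$ and a volume on $\Inv(\R)$ via the Jacobian change-of-variables that underlies Proposition \ref{pvol}. By construction, $\Dabs^{\prime(a,b)}$ is the union, over the $\tau^{(a,b)}_{\sol}$ real $\Gij(\R)$-orbits having property $\sol$ and invariants in $\Inv(\R)^{(a,b)}$, of the images $h_i \kappa(\Inv(\R)^{(a,b)}) h_i^{-1}$ of a Kostant section transported to each such orbit. Rescaling by $\R_{>0}$ to impose the height cutoff, $\Dabs(X)$ becomes the disjoint union of $\tau^{(a,b)}_{\sol}$ sections of the projection $\pi$, each one mapping homeomorphically onto $\Inv(\R)^{(a,b)}_{\h<X}$.

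The main step is to apply the real analogue of Proposition \ref{pvol}, which expresses the pullback of the Euclidean measure $dT$ on $\Vij(\R)$ under the orbit map $\Gij(\R)\times\kappa(\Inv(\R)^{(a,b)})\longrightarrow \Vij(\R)^{(a,b)}$, $(g,\kappa(c))\mapsto g\cdot \kappa(c)$, as $|\mathcal{J}|\,\omega\wedge dc$ for the same rational constant $\mathcal{J}$ that appears in the $p$-adic statement (the map is generically injective on regular semisimple locus modulo $\Stab_{\kappa(c)}$, but over $\R$ the generic stabilizers of real regular semisimple elements are contributed precisely by the multiplicity with which $\Fu\Dabs(X)$ covers a fundamental domain, and this multiplicity is already absorbed in the description of $\Fu\Dabs(X)$ as a multiset — cf.\ the discussion in \S 7.1 that $\Fu h\cdot\Dab(X)$ is a cover of a fundamental domain of degree $\tauab$). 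Thus on each of the $\tau^{(a,b)}_{\sol}$ components,
\begin{equation*}
\Vol\bigl(\Fu\cdot h_i\kappa(\Inv(\R)^{(a,b)}_{\h<X}) h_i^{-1}\bigr) \;=\; |\mathcal{J}|\,\Vol(\Fu)\,\Vol\bigl(\Inv(\R)^{(a,b)}_{\h<X}\bigr),
\end{equation*}
where $\Vol(\Fu)$ is measured using the Haar measure $\omega$ on $\Gij(\R)$ normalized so that $\omega$ and the Euclidean $dT$, $dc$ are compatible (this normalization is the same one used in Proposition \ref{pvol}, ensuring that $\mathcal{J}$ is literally the same constant).

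Summing the $\tau^{(a,b)}_{\sol}$ contributions then yields the claimed formula
\begin{equation*}
\Vol\bigl(\Fu\cdot\Dabs(X)\bigr)\;=\;\tau^{(a,b)}_{\sol}\,|\mathcal{J}|\,\Vol(\Fu)\,\Vol\bigl(\Inv(\R)^{(a,b)}_{\h<X}\bigr).
\end{equation*}
The main obstacle, and the reason one should not simply cite Proposition \ref{pvol} verbatim, is the bookkeeping of the real stabilizers: regular semisimple $T\in \Vij(\R)^{(a,b)}$ have non-trivial real stabilizer of order $\tauab$, and one must verify that the Jacobian identity of Proposition \ref{pvol} applied over $\R$ together with the fact that $\Fu\Dabs(X)$ is a degree-$\tauab$ multi-cover of a fundamental domain for $\Gij(\Z)$ on the locally soluble real locus, combine so that $\tauab$ cancels and only $\tau^{(a,b)}_{\sol}$ remains. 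Once this is checked — and it is, because each Kostant-section fiber $h_i\kappa(\Inv(\R)^{(a,b)}) h_i^{-1}$ meets every $\Gij(\R)$-orbit in exactly one point, so the real stabilizer multiplicity is already factored correctly into the Jacobian formula — the argument of \cite[Proposition 3.12]{BS2selm} transfers verbatim.
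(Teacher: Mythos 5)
Your proposal is correct and follows essentially the same route as the paper, which simply invokes \cite[Proposition 3.12]{BS2selm} after checking its hypotheses (freely generated invariants, existence of Kostant sections, uniformly bounded stabilizers, degrees of invariants summing to $\dim \Vij$); you have merely unpacked that argument by decomposing $\Dabs(X)$ into the $\tau^{(a,b)}_{\sol}$ translated Kostant sections and applying the real Jacobian change of variables to each piece. The only small infelicity is the suggestion that a factor $\tauab$ must ``cancel'': since the volume of the multiset $\Fu\Dabs(X)$ is by definition counted with multiplicity, the change-of-variables identity gives the stated formula directly, and $\tauab$ only enters later, when the multiset is compared to a fundamental domain in the proof of Theorem \ref{countint}.
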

The proofs of \cite[Propositions 3.11, 3.12]{BS2selm} apply, because the action of $\Gij$ on $\Vij$ satisfy the conditions in \cite[Remark 3.14]{BS2selm}. Indeed, the ring of invariants is freely generated; the stabilizer of a regular semisimple element is a finite group scheme of order $2^{n-1}$ and is therefore uniformly bounded (outside the discriminant-zero locus); the sum of the degrees of the invariants equals $n^2$, the dimension of $\Vij$; and there exist Kostant sections $\kappa: \Inv \rightarrow \Vij$. 

We need to simplify the expression
$$\int_{c \in \Sigma_p}\Big(\sum_{T \in \frac{\Vij_c(\Z_p)}{\Gij(\Z_p)}}\frac{w_p(T)}{\#\Stab_{T}(\Z_p)}\Big)dc$$
where $w$ stands for either $\phi_p$, or $\wijp$. We have
$$\sum_{T \in \frac{\Vij_c(\Z_p)}{\Gij(\Z_p)}}\frac{w_p(T)}{\#\Stab_{T}(\Z_p)} = \frac{\#(\Vij_c(\Q_p)_{\sol}/\Gij(\Q_p))}{\#\Stab_{T}(\Q_p)},$$
where the subscript $_{\sol}$ is as in \S 7.1. Depending on whether $\sol$ stands for 1-soluble, or (1,2)-soluble, we have 
$$\displaystyle{\#(\Vij_c(\Q_p)_1/\Gij(\Q_p)) = \#\frac{J_{1,c}(\Q_p)}{2J_{1,c}(\Q_p)}}$$
and
$$\displaystyle{\#(\Vij_c(\Q_p)_{(1,2)}/\Gij(\Q_p))= \# \bigg(\frac{J_{1,c}(\Q_p)}{2J_{1,c}(\Q_p)} \displaystyle{\cap} \frac{J_{2,c}(\Q_p)}{2J_{2,c}(\Q_p)}\bigg)}$$
where the intersection happens in $H^1(\Q_p,\Stab_c)$.

\subsection{Average size of the 2-Selmer group}
We now bound the average size of the 2-Selmer group of our family of hyperellictic curves. Recall that the set of hyperelliptic curve with the extra marked points is in bijection with $\Sigma \subset \Inv(\Z)$. Without any loss of generality, we restrict ourselves to the family over $\Sigma^{(a,b)}$ for a fixed pair $(a,b)$. Using Propositions \ref{pstoll} and \ref{2stoll}, if $c \in \Sigma_p$, the integrand in Proposition \ref{pvol} is constant, and equals $b_p$. The integrand is zero if $c \notin \Sigma_p$. Substituting this in Proposition \ref{pvol}, we obtain
$$\sum_{\substack{c \in \Sigma^{(a,b)} \\\h(c)\le X}}(\#\Sel_2(J_1(C))-2) \leq N(\Vij(\Z)^{(a,b)},X)\prod_{p}\big(b_p|\mathcal{J}|_p\Vol(\Gij(\Z_p))\Vol(\Sigma_p)\big) + o(X^{n^2}).$$
Further, by Proposition \ref{rvol} and Theorem \ref{countint}, 
$$N(\Vij(\Z),X) = |\mathcal{J}|\frac{\tau^{(a,b)}_1}{\tauab}\Vol(\Fu)\Vol(\Inv(\R)^{(a,b)}_{\h<X}) + o(X^{n^2}).$$  Clearly, $b_{\infty} = \frac{\tau^{(a,b)}_1}{\tauab}$, because the numerator equals the number of real soluble orbits. Therefore, by using the fact that the $b_{\nu}$ all multiply to 1, we see that the main term simplifies to 
$$\displaystyle{\Vol(\Inv(\R)^{(a,b)}_{\h<X}) \Vol(\Fu)\prod_p\Vol(H(\Z_p))}.$$
The product of the local volumes along with $\Vol(\Fu)$ is the Tamagawa number of $H$, which equals $4$ (\cite{Langlands}). Therefore, we have 
$$\displaystyle{\sum_{\substack{c \in \Sigma^{(a,b)} \\\h(c)\le X}}(\#\Sel_2(J_{1,c})-2) \leq 4\Vol(\Inv(\R)^{(a,b)}_{\h<X})\prod_p\Vol(\Sigma_p)}.$$
The number of hyperelliptic curves in our family with height less than $X$ is $\displaystyle{\sum_{\substack{c \in \Sigma^{(a,b)} \\\h(c)\le X}}} 1$, which by Theorem \ref{last} is  $\displaystyle{\Vol(\Inv(\R)^{(a,b)}_{\h<X}) \prod_p\Vol(\Sigma_p)}$ up to an error of $o(X^{n^2})$. Putting all this together, we have 
$$\displaystyle{ \lim_{X \to \infty}}\frac{\displaystyle{\sum_{\substack{c \in \Sigma^{(a,b)} \\\h(c)\le X}}(\#\Sel_2(J_{1,c})-2)}}{\displaystyle{\sum_{\substack{c \in \Sigma^{(a,b)} \\\h(c)\le X}} 1}}\le 4.$$
Therefore, we have the average size of the 2-Selmer group is bounded above by 6. 

\subsection{The (1,2)-Selmer}

\begin{Proposition}\label{diverges}
Let $p>2$ be a large enough prime. We have 
$$\int_{\Inv(\Z_p)}\displaystyle{\frac{\#(J_{1,c}(\Q_p)/2J_{1,c}(\Q_p) \cap J_{2,c}(\Q_p)/2J_{2,c}(\Q_p))dh}{\# \Stab_c(\Q_p)}}dc \le (1-a/p)\Vol(\Inv(\Z_p))$$
where the intersection happens inside $H^1(\Q_p,\Stab_c)$, and $a$ is some positive constant independent of $p$.
\end{Proposition}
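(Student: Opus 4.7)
The plan is to identify a subset $U \subset \Inv(\Z_p)$ of measure at least $(r/p)\Vol(\Inv(\Z_p))$, where $r > 0$ is independent of $p$, on which the integrand is bounded above by $1/2$, while the integrand is at most $1$ everywhere else. This will yield the bound $(1 - r/(2p))\Vol(\Inv(\Z_p))$, proving the proposition with $a = r/2$.

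First, for the pointwise upper bound: by Proposition \ref{pstoll}, both $J_{1,c}(\Q_p)/2J_{1,c}(\Q_p)$ and $J_{2,c}(\Q_p)/2J_{2,c}(\Q_p)$ have cardinality equal to $\#\Stab_c(\Q_p)$. These are subgroups of $H^1(\Q_p, \Stab_c)$ of equal size, so whenever they are distinct their intersection has index at least $2$ in each, making the integrand at most $1/2$; otherwise the integrand equals $1$. Moreover, by the (unlabeled) proposition appearing immediately after Proposition \ref{vtemp}, whenever $c \bmod p$ is regular semisimple these two subgroups agree, so the integrand is exactly $1$ on this generic locus.

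To exhibit $U$, I take the preimage in $\Inv(\Z_p)$ of the set of $\F_p$-invariants singled out by Lemma \ref{smallonetwo}: those $c \in \Inv(\F_p)$ with $e = 0$, with $f$ having distinct roots one of which is $0$, and with the product of the non-zero roots a square in $\F_p^\times$. Lemma \ref{smallonetwo} gives $\mu(U) \geq (r/p)\Vol(\Inv(\Z_p))$ for $p$ large. For such $c$, $f \bmod p = x \cdot g(x)$ with $g$ separable and $g(0) \neq 0$, so $C_{1,c}$ has good reduction while $C_{2,c}\colon y^2 = x^2 g(x)$ acquires a node at the origin and has bad reduction. Crucially, $c \bmod p$ is \emph{not} regular semisimple (since $e = 0$), so the previous paragraph does not force the integrand to equal $1$ on $U$.

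The main obstacle is showing that for $c \in U$ the two subgroups $J_{1,c}(\Q_p)/2J_{1,c}(\Q_p)$ and $J_{2,c}(\Q_p)/2J_{2,c}(\Q_p)$ are genuinely distinct. The approach is to use the factorization $L = \Q_p[x]/f(x) \cong \Q_p \times M$ arising from the $p$-adic root $\alpha_0 \equiv 0 \pmod p$, writing $H^1(\Q_p, \Stab_c) = (L^\times/L^{\times 2})_{N=1}$ in these coordinates. Stoll's explicit descent formulas then describe both images: the good reduction of $C_{1,c}$ constrains $J_{1,c}/2J_{1,c}$ to the unramified subgroup, while the nodal reduction of $C_{2,c}$, combined with the hypothesis that the product of the non-zero roots is a square (which controls the norm of the vanishing-cycle class in the $\Q_p$-component of $L$), will produce a specific ramified class that belongs to $J_{2,c}/2J_{2,c}$ but not to $J_{1,c}/2J_{1,c}$. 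This explicit descent computation is the heart of the proof; once carried out, the final estimate follows by combining the at-most-$1/2$ bound on $U$ with the at-most-$1$ bound on its complement, as indicated in the first paragraph.
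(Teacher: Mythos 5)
Your overall strategy is exactly the paper's: bound the integrand by $1$ everywhere (via Proposition \ref{pstoll}), by $1/2$ on the preimage $S\subset\Inv(\Z_p)$ of the locus from Lemma \ref{smallonetwo}, and use $\Vol(S)\ge (r/p)\Vol(\Inv(\Z_p))$ to conclude with $a=r/2$. The identification of $S$, the observation that $C_{1,c}$ has good reduction there (so that, by Proposition \ref{vtemp}, the image of $J_{1,c}(\Q_p)/2J_{1,c}(\Q_p)$ is the unramified subgroup $((\Z_p^{\times})^n/(\Z_p^{\times 2})^n)_{N=1}$), and the final bookkeeping all match the paper.

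However, there is a genuine gap at precisely the step you yourself call the heart of the proof: you never exhibit a class in the image of $J_{2,c}(\Q_p)/2J_{2,c}(\Q_p)$ that is ramified; you only assert that the nodal reduction of $C_{2,c}$ together with the square condition ``will produce'' one. Bad reduction of $J_{2,c}$ does not by itself force the descent image to contain ramified classes, so this must be verified, and the square hypothesis enters in a specific way that your ``vanishing-cycle'' framing does not capture. The paper's verification is concrete: the roots of $f_c$ in $\Z_p$ are units $u'_1,\hdots,u'_{n-1}$ together with one root $p^{2b}u'_e$ of even positive valuation (even because $f_c(0)=e_c^2$), whence $f_c(p)=p\,(1-p^{2b-1}u'_e)\prod_{i=1}^{n-1}(p-u'_i)$; since $n-1$ is even and $\prod u_i$ is a nonzero square mod $p$, the product $\prod_{i=1}^{n-1}(p-u'_i)$ is a unit square, so $pf_c(p)\in\Q_p^{\times 2}$. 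This yields an explicit point $Q=(p,\sqrt{pf_c(p)})$ on $y^2=xf_c(x)$, and Stoll's descent map sends the class of $Q$ minus a point at infinity to an element of $H^1(\Q_p,\Stab_c)$ whose component at the small root has odd valuation --- a ramified class, hence outside the image of $J_{1,c}(\Q_p)$. Until you supply this (or an equivalent) computation, the claim that the two subgroups differ on $S$, and with it the factor $1/2$, is unsupported. Two minor further points: you should discard the measure-zero subset of $S$ where $e_c=0$ exactly, so that $c\in\Invrs(\Q_p)$ and the orbit parametrization applies; and your appeal to the unlabeled proposition after Proposition \ref{vtemp} is unnecessary, since off $S$ you only need the trivial bound $1$.
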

\begin{proof}
For ease of notation, we will drop the subscript $c$. We had remarked earlier (Proposition \ref{pstoll}) that $\displaystyle{\frac{\#J_1(\Q_p)/2J_1(\Q_p)}{\#\Delta(\Q_p)}} = 1$. The same holds true with $J_2$ in place of $J_1$. Therefore, $1$ is a trivial upper-bound for the integral. We note that for some $c$, if the images of $J_1(\Q_p)/2J_1(\Q_p)$ and $J_2(\Q_p) /2J_2(\Q_p)$ don't coincide, then the integrand will be at most $1/2$. We will show that there exists $S \subset \Inv(\Z_p)$ of volume greater than $r/p$, such that for $c \in S$, the images of $J_1$ and $J_2$ don't coincide. Here, $r$ is the constant alluded to in Lemma \ref{smallonetwo}. The proposition follows from the existence of $S$. Indeed, 
$$\int_{\Inv(\Z_p)}\displaystyle{\frac{\#(J_{1,c}(\Q_p)/2J_{1,c}(\Q_p) \cap J_{2,c}(\Q_p)/2J_{2,c}(\Q_p))dh}{\# \Stab_c(\Q_p)}}dc \le$$
$$\int_{\Inv(\Z_p) \setminus S}1dc + \int_{c\in S}1/2dc \le (1- r/p)\Vol(\Inv_p(F)) + (r/2p)\Vol(\Inv_p(F))$$
Setting $a = r/2$ gives the proposition. 

Let $S_p\subset \Inv(\F_p)$ be the subset defined by the conditions in Lemma \ref{smallonetwo}. Let $S' \subset \Inv(\Z_p)$ be the set of all points reducing to $S_p$. For $c \in S'$, the polynomial $f_c$ factors into distinct linear factors over $\Z_p$ (Hensel's lemma), and so the discriminant of $f_c$ is not zero. In fact, it is a unit. For such $c$, exactly one of the roots of $f_c$ is a multiple of $p$. Indeed, its $p$-adic valuation has to equal $2b$ for some positive integer $b$ (this is because $f_c(0) = e_c^2$). Let $S''$ denote the set of all $c \in S'$ such that $e_c = 0$. Clearly, $S''$ a measure-zero set. Let $S = S' \setminus S''$. The set $S$ has volume at least $r/p$. 

We therefore are left with showing that for $c \in S$, the images of $J_1(\Q_p)$ and $J_2(\Q_p)$ in $H^1(\Q_p, \Stab_c)$ do not coincide. By construction $f_c$ splits into linear factors which are pairwise unequal modulo $p$ over $\Z_p$. We therefore have that $\Stab_c = \Res_{\Q_p^n / \Q_p} (\mu_2) _{N = 1}$, and $H^1(\Q_p,\Stab_c) = ((\Q_p^{\times})^{n}/(\Q_p^{\times 2})^{n})_{N=1}$. Because the discriminant of $f_c$ is a $p$-adic unit, by Proposition \ref{vtemp} the image of $J_1(\Q_p)$ equals $((\Z_p^{\times})^{n}/(\Z_p^{\times 2})^{n})_{N=1}$. In order to show that the image of $J_2(\Q_p)$ isn't the same, it suffices to show the existence of an element in $H^1(\Q_p,\Stab_c)$ with odd p-adic valuation in at least one of its components. We will show that $pf_c(p)$ is a perfect square in $\Q_p$, thereby demonstrating the existence of a $\Q_p$-rational point $Q$ of $y^2 = xf_c(x)$ with $x$-coordinate having p-adic valuation $1$ (indeed, the $x$ coordinate by construction would equal $p$). It is then easy to see using the explicit descent map described in \cite{Stoll}, that the image of $Q - \infty_1$ in $H_1(\Q_p,\Stab_c)$ has the property that at least one of its components has odd p-adic valuation, thereby completing the proof. 

Thus it suffices to show that $pf_c(p)$ is a perfect square in $\Q_p$. Suppose that $u'_i \in \Z_p^{\times}$, which lift $u_i \in \F_p^{\times}$, are the roots of $f_c$. Suppose that $p^{2b}u'_e \neq 0$ is the final root (which as mentioned above has $p$-adic valuation equaling $2b$). Then, $f_c(p) = (p-p^{2b}u'_e)\prod_{i=1}^{n-1}(p-u'_i) =  p(1-p^{2b-1}u'_e)\prod_{i=1}^{n-1}(p-u'_i)$. Since $n$ is odd, 
$$\prod_{i=1}^{n-1}(p-u'_i) = \prod_{i=1}^{n-1}(u'_i-p) \equiv \prod_{i=1}^{n-1}u'_i \mod{p}.$$
By construction, $\displaystyle{\prod_{i=1}^{n-1}(p-u'_i)}$ is a non-zero square modulo $p$, and is therefore a square in $\Z_p ^{\times}$. Similarly, $1 - p^{2b-1}u'_e$ is also a square in $\Z_p^{\times}$. Therefore, $f_c(p)$ is $p$ multiplied by a square, and so $pf_c(p)$ must be a square in $\Q_p^{\times}$. We have proved our result. 
\end{proof}

We now prove that the average size of $\Sel_2(J_1)\cap \Sel_2(J_2)$ is 2. Again, it suffices to restrict ourselves to the family over $\Sigma^{(a,b)} = \Sigma \cap \Inv(\R)^{(a,b)}$ for a fixed pair $(a,b)$. It suffices to prove that 
$$\lim_{X \to \infty}\frac{\displaystyle{\sum_{\substack{c \in  \Sigma^{(a,b)} \\\h(c)\le X}}(\#(\Sel_2(J_{1,c})\cap \Sel_2(J_{2,c}))-2)}}{\displaystyle{\sum_{\substack{c \in  \Sigma^{(a,b)} \\\h(c)\le X}}} 1} =0.$$
The denominator (up to an error of $o(X^{n^2})$) is a fixed constant multiple of $X^{n^2}$. We will prove that the numerator is $o(X^2)$. 
Indeed, we have that 
$$\sum_{\substack{c \in  \Sigma^{(a,b)} \\\h(c)\le X}}(\#(\Sel_2(J_1(C))\cap \Sel_2(J_2(C)))-2) = N_{\wij}(\Vij(\Z)_{(1,2)}^{(a,b)},X) + o(X^{n^2}).$$
By Theorem \ref{infcong}, we have 
$$N_{\wij}(\Vij(\Z)_{(1,2)}^{(a,b)},X) \le  N(\Vij(\Z)^{(a,b)}_{(1,2)},X)\prod_p\int_{\Vij(\Z_p)}\wijp(T)dT + o(X^{n^2}).$$
The above proposition shows that the product of the local weights converges to $0$. We have thus shown that the expression is dominated by the error term $o(X^{n^2})$. Therefore, the above limit equals zero and we have proved Theorem \ref{strongselm}.

\AtEndDocument{{\footnotesize%
  \textit{E-mail address}, \texttt{ashankar@math.harvard.edu} \par
  \textsc{Department of Mathematics, Harvard University, Cambridge, MA 02138,} \par \textsc{USA}
}}

\end{document}